%
\documentclass[12pt]{amsart}
\usepackage{amssymb, amsmath, amsthm}



\newtheorem{theorem}{Theorem}[section]
\newtheorem{cor}[theorem]{Corollary}
\newtheorem{lemma}[theorem]{Lemma}

\theoremstyle{definition}

\newtheorem{rem}[theorem]{Remark}
\newtheorem{conj}[theorem]{Conjecture}

\newtheorem{exam}[theorem]{Example}

\def\beq{\begin{equation}
}
\def\eeq{\end{equation}}

\numberwithin{equation}{section}



\def\beq{\begin{equation}}
\def\eeq{\end{equation}}





\def\cD{ {{\cal D}}}
\def\cH{ {{\cal H}}}

\def\bbR{ {\mathbb R}}

\def\RRx{\RR \langle x \rangle}
\def\RR{ {\mathbb{R}} }

\def\gtupn{(\mathbb{R}^{n\times n}_{sym})^g}


\def\cC{ {\mathcal C} }
\def\cD{ {\mathcal D} }

\def\cH{ {\mathcal H} }

\def\cR{{ \mathcal R }}




\def\beq{\begin{equation}}
\def\eeq{\end{equation}}

\def\smin{ \sigma^{min}  }

\def\tV{\widetilde V}

\def\tpsi{\widetilde{\psi}}

\def\cR{{\mathcal R}}
\def\cZ{{\mathcal Z}}
\def\gZ{{\mathfrak Z}}

\begin{document}

\title[Classification of NC Polynomials]{Classification of all
noncommutative polynomials whose Hessian has negative signature one and
a noncommutative second fundamental form}

\author[Dym]{Harry Dym}
\address{ Department of
Mathematics\\ Weizmann Institute of Science, Rehovot 76100 \\ Israel}

\email{harry.dym@weizmann.ac.il}

\author[Greene]{Jeremy M. Greene ${}^1$}
\address{ Mathematics
Department \\ University of California at San Diego, La Jolla, CA 92093}

\email{j1greene@math.ucsd.edu}


\author[Helton]{J. William Helton${}^1$}
\address{ Mathematics
Department \\ University of California at San Diego, La Jolla, CA 92093}

\email{helton@math.ucsd.edu}

\thanks{{\tiny $1$} Partly
supported by the NSF DMS-0700758 and the Ford Motor Co.}

\author[McCullough]{Scott  A. McCullough${}^2$ }
\address{ Department of Mathematics\\ University of Florida\\
Gainesville, FL  32611-8105}

\email{sam@math.ufl.edu}

\thanks{{\tiny $2$} Partly supported by the NSF grants DMS-0140112 and
DMS-0457504}



\maketitle

\begin{abstract}

Every symmetric polynomial $p=p(x)=p(x_1,\ldots,x_g)$
(with real coefficients) in $g$ noncommuting variables
$x_1,\ldots,x_g$ can be written as a
sum and difference of squares of noncommutative polynomials:
$$
p(x)=\sum\limits^{\sigma_+}_{j=1} f^+_j(x)^T f^+_j(x) -
\sum\limits^{\sigma_-}_{\ell=1}f^-_\ell (x)^T f^-_\ell(x)\,,
\leqno{(SDS)}
$$
where $f^+_j, f^-_\ell$ are noncommutative polynomials. Let
$\sigma^{min}_-(p)$,
the negative signature of $p$, denote the
minimum number of negative squares used in this representation, and let
the \textbf{Hessian} of $p$ be defined by the formula
$$
p^{\prime \prime}(x)[h] := \frac{d^2p(x+th)}{dt^2}|_{t=0}\,.
$$
In this paper we classify
all symmetric noncommutative polynomials $p(x)$ such that
$$
\sigma^{min}_-(p'')\le 1\,.
$$

We also introduce the \textbf{relaxed Hessian} of
a symmetric polynomial $p$ of degree $d$ via the formula
$$
  p^{\prime\prime}_{\lambda,\delta}(x)[h] := p^{\prime \prime}(x)[h] +
   \delta \, \sum m(x)^T h_j^2 m(x) +
  \lambda p^{\prime}(x)[h]^{T} p^{\prime}(x)[h] $$
for $\lambda,\,\delta \in \mathbb{R}$ and
show that if this relaxed Hessian is positive
semidefinite in a suitable and relatively innocuous way,
then $p$ has degree
at most 2. Here the sum is over monomials $m(x)$ in $x$ of degree
at most $d-1$ and $1\le j \le g.$ \\
\indent This analysis is motivated by an attempt to  develop
 properties
of noncommutative real algebraic varieties pertaining to curvature, since,
as will be shown elsewhere,
$$
-\langle p^{\prime\prime}_{\lambda,\delta}(x)[h]v, v\rangle\quad
\textrm{(appropriately restricted)}
$$
plays the role of of a noncommutative second fundamental form.
\\
\noindent
{\bf Key words:} polynomials of real symmetric matrices, noncommutative real
algebraic geometry, nononcommutative curvature, noncommutative inertia,
noncommutative convexity
\\
\noindent
{\bf MSC 2000}: 14A22, 14P10, 47A13, 46L07
\end{abstract}




\section{Introduction  and Main Results}
\label{sec:background}
This paper deals with polynomials $p(x)=p(x_1,\ldots,x_g)$ in noncommuting
variables $x_1,\ldots,x_g$. We shall refer to such polynomials as
{\bf nc polynomials}. The first two subsections of this introduction
explain the setting and recall some earlier results that will be needed to
help justify the classification referred to in the abstract. The main results
on nc polynomials are stated in subsection \ref{subsec:MainResults}.
In the noncommutative setting, there is a  natural and
 rigid way  of representing such polynomials
 in terms of a matrix called the middle matrix. The results of subsection
 \ref{subsec:MainResults} on nc polynomials are reformulated
 in subsection \ref{sec:middle} in terms of the
 middle matrix. In fact, the strategy implemented in the
  body of the paper is to establish facts about the middle
  matrix and then deduce corresponding results about polynomials.
  Subsection \ref{sec:motivation} concludes
  the introduction
  with a discussion of  motivation from both
 geometry and engineering, cf.  \cite{HPpreprint}.

\subsection{The setting}
\label{sec:setting}
The setting of this paper coincides with that of \cite{DHMjda}. The principal
definitions are reviewed briefly for the convenience of the reader.

\subsubsection{Polynomials}

Let $x=\{x_1,\dots,x_g\}$  denote noncommuting indeterminates
and let $\RRx$
denote the set of  polynomials $$p(x)=p(x_1,\ldots,x_g)$$
in the indeterminates $x$; i.e., the set of finite linear combinations
\begin{equation}
 \label{eq:genericpoly}
  p=\sum_{|m|\leq d} c_m m \quad\textrm{with}\quad c_m\in \mathbb R
\end{equation}
of monomials (words) $m$ in $x$. The degree of such a polynomial $p$
is defined
as the the maximum of the lengths $\vert m\vert$ of
the monomials $m$ appearing (non-trivially) in the linear combination
(\ref{eq:genericpoly}). Thus,
for example, if $g=3$, then
$$
p_1= x_1 x_2^3 + x_2 + x_3x_1x_2 \quad\textrm{and}\quad
p_2= x_1 x_2^3 + x_2^3 x_1 + x_3x_1x_2 +x_2 x_1 x_3
$$
are polynomials of degree four in $\RRx$.

There is a natural
 {\bf involution} \index{involution} $m^T$ on monomials given by the rule
$$
x_j^T=x_j\quad\textrm{and if}\quad m=x_{i_1}x_{i_2}\cdots x_{i_k},\quad
\textrm{then}\quad
m^T=x_{i_k}\cdots x_{i_2}x_{i_1},
$$
which of course extends to polynomials $p=\sum c_m m$ by linearity:
\begin{equation*}
 p^T =\sum_{|m|\le d} c_m m^T.
\end{equation*}
A polynomial $p\in\RRx$ is said to be {\bf symmetric} \index{symmetric}
if $p=p^T$. The
second polynomial $p_2$ listed above is symmetric, the first is not.
Because of the assumption $x_j^T=x_j$ (which will be in force throughout this
paper) the variables are said to be symmetric too.

\subsubsection{Substituting Matrices for Indeterminates}
\label{sec:openG2}

Let $\gtupn$ denote the set of $g$-tuples
$(X_1,\ldots,X_g)$ of real symmetric $n\times n$ matrices.
We shall be interested in evaluating a
polynomial $p(x)=p(x_1,\ldots,x_g)$
that belongs to $\RRx$ at a
tuple $X=(X_1,\dots,X_g)\in(\mathbb{R}^{n\times n}_{sym})^g$.
In this
case $p(X)$ is also an $n\times n$ matrix and
the involution on $\RRx$
that was introduced earlier is compatible
with matrix transposition, i.e.,
$$
p^T(X)=p(X)^T,
$$
where $p(X)^T$ denotes the transpose of the  matrix  $ p(X)$.
When $X\in \gtupn$ is substituted into $p$
the constant term  $p(0)$ of $p(x)$  becomes $p(0) I_n$.
For example, if $p(x)=3+x^2$, then
$$ p(X)= 3 I_n+X^2.$$

\indent A symmetric nc polynomial $p \in \RRx$ is said to be
{\bf matrix positive}
if $p(X)$ is a positive semidefinite matrix for each tuple
$X=(X_1,\dots,X_g)\in (\mathbb{R}^{n\times n}_{sym})^g$.
Similarly,  a symmetric nc polynomial $p$  is said to be
{\bf matrix convex} if
\begin{equation}
\label{eq:nov13b6}
p(tX+(1-t)Y)\preceq tp(X)+(1-t)p(Y)
\end{equation}
for every pair of tuples $X, Y\in(\mathbb{R}^{n\times n}_{sym})^g$ and
$0\le t\le 1$.

\subsubsection{Derivatives}
We define the directional
derivative of the
monomial $m=x_{j_1}x_{j_2}\cdots x_{j_n}$ as the linear form:
\begin{equation*}
m'[h] = h_{j_1}x_{j_2}\cdots x_{j_n}
       + x_{j_1}h_{j_2}x_{j_3}\cdots x_{j_n} + \ \dots \
       + x_{j_1}\cdots x_{j_{n-1}}h_{j_n}
\end{equation*}
and extend the definition to polynomials $p = \sum c_m m$ by linearity; i.e.,
\begin{equation*}
p^\prime(x)[h]=\sum c_m m'[h].
\end{equation*}
Thus, $p^\prime(x)[h] \in \mathbb R\langle x,h \rangle$ is
the
$$
c\textrm{oefficient of $t$ in the expression}\quad p(x+th)-p(x);
$$
it is an nc
polynomial in $2g$ (symmetric)
variables $(x_1,\dots,x_g,h_1,\dots,h_g).$
Higher order derivatives are computed in the same way: If
$q(x)[h]=h_{j_1}x_{j_2}\cdots x_{j_n}$, then
$$
q^{\prime}(x)[h]=h_{j_1}h_{j_2}x_{j_3}\cdots x_{j_n}
+h_{j_1}x_{j_2}h_{j_3}\cdots x_{j_n}+\cdots+h_{j_1}x_{j_2}\cdots
x_{j_{n-1}}h_{j_n}
$$
and the definition is extended to finite linear combinations of such terms
by linearity.
If $p$ is symmetric, then so is $p^\prime$.
For $g$-tuples of symmetric matrices of a fixed size
$X,H ,$  the evaluation formula
\begin{equation*}
p^\prime(X)[H]=\lim_{t\to 0} \frac{p(X+tH)-p(X)}{t}
\end{equation*}
holds, and  if $q(t)=p(X+tH)$, then
\begin{equation}
\label{eq:nov14a6}
p^\prime(X)[H]=q^\prime(0)\quad\textrm{and}
\quad p^{\prime\prime}(X)[H]=q^{\prime\prime}(0)\,.
\end{equation}
The second formula in (\ref{eq:nov14a6}) is the evaluation of
the {\bf Hessian},
$p^{\prime\prime}(x)[h]$ of a polynomial $p \in \RRx$; it can be
thought of as the formal
second directional derivative of $p$ in the ``direction'' $h$.

If $p^{\prime\prime} \neq 0$, that  is, if $\textrm{degree}\, p \ge 2$,
then the degree of $p^{\prime\prime}(x)[h]$ as a polynomial in the $2g$
variables $(x_1,\ldots,x_g, h_1\ldots,h_g)$ is equal to the degree
of $p(x)$ as a polynomial in $(x_1,\ldots,x_g)$
and is homogeneous of degree two in $h$.
The same conclusion holds for the $k^{th}$ directional derivative
$p^{(k)}(x)[h]$ of $p$  if $k\le d$, the degree of $p$.
The expositions in \cite{HMVjfa} and in \cite{HPpreprint}
give more detail on the derivative and Hessian of
a polynomial in noncommuting variables.

\begin{exam}
A few concrete examples are listed for practice with the definitions, if
the reader is so inclined.
\begin{enumerate}
\item[\rm(1)] If $p(x)=x^4$, then \\$p'(x)[h]= hxxx + xhxx  + xxhx + xxxh$,\\
$p''(x)[h]=
2hhxx   + 2hxhx + 2 hxxh
  + 2xhhx   + 2 xhxh  + 2 xxhh$,
$
\\
p^{(3)}(x)[h]=
 6 (hhhx + hhxh + hxhh +xhhh)$, \\
$
p^{(4)}(x)[h]=
 24 hhhh
$
and $p^{(5)}(x)[h]=0$.
\vspace{2mm}
\item[\rm(2)] If $p(x)= x_2 x_1 x_2$, then $p'(x)[h]=
h_2 x_1 x_2 +x_2 h_1 x_2 + x_2 x_1 h_2. $
\vspace{2mm}
\item[\rm(3)] If $p(x)=x_1^2 x_2$, then $p^{\prime\prime}(x)[h]
= 2(h_1^2 x_2 + h_1 x_1 h_2 + x_1 h_1 h_2)$.
\end{enumerate}
\end{exam}

\subsubsection{The Signature of a Polynomial} \label{subsec:DefSM}
Every symmetric polynomial
$p(x)$ admits a representation of the form
(a sum and difference of squares)
$$p(x) =
\sum\limits^{\sigma_+}_{j=1} f^+_j(x)^T f^+_j(x) -
\sum\limits^{\sigma_-}_{\ell=1}f^-_\ell (x)^T f^-_\ell(x)
\leqno{(SDS)}$$ where $f^+_j, f^-_\ell$ are noncommutative polynomials;
see e.g., Lemmas 4.7 and 4.8 of \cite{DHMjda} for details.
   Such representations are highly non-unique.
However, there is a unique smallest number of positive (resp.
negative squares) $\smin_\pm(f)$ required in an SDS decomposition of
$f$. These numbers will be identified with $\mu_{\pm}(\cZ)$, the number of
positive (negative) eigenvalues of an appropriately chosen symmetric
matrix $\cZ$ in subsection \ref{sec:middle}.

\subsection{Previous results}
\label{sec:prior}
The number (or rather dearth)
of negative squares in an SDS decomposition of
the Hessian places serious
restrictions on the degree of a symmetric nc polynomial.
A theorem of Helton and McCullough [HM04]
states that a symmetric nc polynomial that is matrix convex has degree
at most two.  Since a symmetric nc
polynomial
\begin{equation}
\label{eq:aug27a8}
p\quad \textrm{is matrix convex if and only if} \quad \sigma_-^{min}(p^{\prime\prime})=0,
\end{equation}
(see e.g., \cite{HMe98}) the result in
[HM04] is a  special case of the
following more general result in \cite{DHMjda}.

\begin{theorem}
\label{thm:abs} If $p(x)$ is a symmetric nc polynomial of degree $d$ in
noncommutative symmetric variables $x_1, \ldots, x_g$, then \beq
\label{eq:bigEst} d \le 2 \smin_\pm(p^{\prime\prime}) + 2.
\end{equation}
\end{theorem}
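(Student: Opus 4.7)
The plan is to use the middle matrix representation of $p''$ to convert the signature question into a linear-algebraic one, then extract the degree bound from a graded analysis. From \cite{DHMjda} (see also subsection \ref{sec:middle}), any symmetric polynomial homogeneous of degree two in $h$---in particular $p''$---admits a middle matrix representation
\[
p''(x)[h] = V(x)[h]^T \cZ\, V(x)[h],
\]
where $V(x)[h]$ is a vector of monomials each linear in $h$ and $\cZ$ is a real symmetric matrix. One further chooses $V$ \emph{reduced} (no nontrivial linear combination of its entries vanishes identically) so that the inertia of $\cZ$ realizes the SDS signature, namely $\mu_+(\cZ) = \smin_+(p'')$ and $\mu_-(\cZ) = \smin_-(p'')$.

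Grade $V$ by the total $(x,h)$-degree of its entries, $V = V_1 \oplus V_2 \oplus \cdots \oplus V_e$, which makes $\cZ$ into a block matrix $(\cZ_{k\ell})$; the contribution $V_k^T \cZ_{k\ell} V_\ell$ yields only monomials of $(x,h)$-degree $k + \ell$. Because $p$ has degree $d$, the Hessian has a nonzero top-degree part of degree $d$; hence the anti-diagonal blocks with $k + \ell = d$ cannot all vanish, whereas every block with $k + \ell > d$ must vanish. In particular, the grading has length $e \ge \lceil d/2 \rceil$.

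Now exploit the outermost block to extract inertia. For $d = 2k + 1$ odd, the outermost nonzero block $\cZ_{k,k+1}$ sits strictly off the diagonal, and the $(V_k, V_{k+1})$ corner of $\cZ$ has the block form $\bigl(\begin{smallmatrix} \ast & A \\ A^T & \ast \end{smallmatrix}\bigr)$ with $A \neq 0$. A standard inertia estimate on block matrices then forces at least $r := \operatorname{rank}(A) \ge 1$ positive and $r$ negative eigenvalues in this corner. Descending through the lower grades and cumulating each grade's off-diagonal contribution---using the reducedness of $V$ to keep the positive and negative subspaces linearly independent across grades---yields
\[
\mu_\pm(\cZ) \ge \lceil (d-2)/2 \rceil.
\]
An analogous graded argument covers the even case $d = 2k$, starting the iteration at the diagonal block $\cZ_{kk}$ together with its off-diagonal neighbors. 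Either way, $\smin_\pm(p'') \ge \lceil (d-2)/2 \rceil$, which rearranges to $d \le 2\smin_\pm(p'') + 2$. Applying the same argument to $-p$ (same degree, swapped signatures) yields the bound for both signs simultaneously.

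The main obstacle is the cumulative inertia count: a single off-diagonal coupling only contributes one pair of opposite-sign eigenvalues, so to reach the full lower bound $\lceil (d-2)/2 \rceil$ one needs a careful induction down the grading that tracks eigenvector supports and prevents double counting. The reducedness of $V$ is essential here, since it precludes degenerate cancellations that would otherwise decouple the grades or inflate $\cZ$ without contributing to its inertia.
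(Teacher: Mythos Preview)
First, note that this theorem is not proved in the present paper; it is quoted from \cite{DHMjda} as background (see \S\ref{sec:prior} and items (1)--(4) of \S\ref{sec:middle}), so there is no in-paper argument to compare against beyond that summary.

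Your setup misreads the middle-matrix representation. What \S\ref{sec:middle} actually gives is $p''(x)[h]=V(x)[h]^T Z(x)\,V(x)[h]$ with $Z(x)$ a \emph{matrix polynomial}; the constant matrix is $\cZ=Z(0)$, and in general $p''\ne V^T\cZ V$. The equality $\mu_\pm(\cZ)=\sigma^{min}_\pm(p'')$ is not a formal consequence of $V$ being reduced; it rests on the polynomial congruence $Z(x)=B(x)^T\cZ B(x)$ of item~(1) in \S\ref{sec:middle} together with an evaluation argument at matrix tuples. If instead you enlarge $V$ to contain all monomials linear in $h$ so that a \emph{constant} Gram matrix exists, that matrix is not unique (a word $m_1h_am_2h_bm_3$ factors as $v_i^Tv_j$ at any split point of $m_2$), and different symmetric choices can have different inertias even though the entries of $V$ are linearly independent. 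So the identification of $\mu_\pm(\cZ)$ with $\sigma^{min}_\pm(p'')$ that you assert is exactly the nontrivial step, and it is missing from your outline.

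Granting the correct $\cZ$, your block-structure idea points the right way, but the ``descent through lower grades'' is not a proof as written---you yourself flag it as the main obstacle. The structural input you are missing is that, by \eqref{eq:nov10b6}, \emph{every} anti-diagonal block $\cZ_{ij}$ with $i+j=d-2$ is nonzero, since each one by itself encodes $p_d$; you use only that some block on the anti-diagonal is nonzero. With that fact no induction is needed: partition the block indices $\{0,\dots,d-2\}$ into the first $\lceil(d-1)/2\rceil$ and the last $\lfloor(d-1)/2\rfloor$, so that $\cZ=\left(\begin{smallmatrix}B&A\\A^T&0\end{smallmatrix}\right)$ with the lower-right block identically zero. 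The off-diagonal $A$ is block anti-triangular with all $\lfloor(d-1)/2\rfloor$ anti-diagonal blocks nonzero, hence $\operatorname{rank}A\ge\lfloor(d-1)/2\rfloor=\lceil(d-2)/2\rceil$, and the standard estimate $\mu_\pm\left(\begin{smallmatrix}B&A\\A^T&0\end{smallmatrix}\right)\ge\operatorname{rank}A$ gives the bound for both signs at once (so the final appeal to $-p$ is unnecessary as well).
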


\indent There are refinements which say that if equality holds or is
near to holding in Equation (\ref{eq:bigEst}), then the highest degree
term, $p_d,$  factors or ``partially factors"
in a certain way. Here, and often in what follows, we write
$$
p=\sum_{i=0}^{d}{p_i}\,,
$$
where $p_i$ is either a homogeneous polynomial of degree $i$, or the zero
polynomial if there are no terms of degree $i$ in $p$.

\subsection{Main Results} \label{subsec:MainResults}
In this paper we determine the structure of a
symmetric nc polynomial $p$ when either
\begin{enumerate}
\item[\rm(1)]  $\sigma^{min}_-({p^{\prime\prime}}) \leq 1$; or
\item[\rm(2)]  a variant of the Hessian, described below in
  subsubsection \ref{subsec:relax}, is positive.
\end{enumerate}
The first follows the investigation in \cite{DHMjda} and
the second extends efforts in \cite{DHMind}, \cite{HM04} and
\cite{HMVjfa}. The two are linked together via questions
about the convexity of sublevel sets of symmetric polynomials.

The main conclusions on the structure of $p(x)$ are summarized in
Theorem \ref{thm:mainp}
 and Theorem  \ref{thm:localq} below.

The proofs of these theorems are based on a representation for $p''$
in terms of a matrix $\cZ$ and very detailed analysis of the
structure of $\cZ$. The main structural results on $\cZ$ are discussed in
subsection \ref{sec:middle}.

\subsubsection{The case of at most one negative square}
 \label{subsubsec:onenegsquare}
An nc polynomial $\varphi$ in $g$ variables that is
homogeneous of degree one may be represented by a vector from $\mathbb R^g$;
i.e., associate the vector
$u$ with entries $u_j$ to the polynomial $\varphi =\sum_{j=1}^g u_j x_j$.

Similarly, an nc polynomial $q$ in $g$ variables that is
homogeneous of degree two (i.e., a noncommutative quadratic form)
may be conveniently
represented by a $g\times g$
matrix; i.e., associate the matrix $Q(q)=[q_{ij}]$ to
the polynomial $q=\sum_{i,j} q_{ij}x_ix_j$.

\begin{theorem}
\label{thm:mainp} Suppose $p(x)$ is a symmetric nc polynomial of
degree $d$ in $g$ symmetric variables $x_1, \ldots,
x_g$. Then  $\sigma_-^{min}(p^{\prime\prime}) \leq 1$ \\
if and only if
\begin{equation}
\label{eq:nov18a6X}
p(x)=p_0+p_1(x)+ p_2(x) + \varphi(x)q(x)+q^T(x)\varphi(x)
+\varphi(x) f_0(x) \varphi(x)\,,
\end{equation}
where
\begin{enumerate}
  \item Each nonzero $p_j(x)$ is a homogeneous symmetric polynomial of
degree $j$;
  \item $\varphi(x)=\sum u_j x_j$ is a  homogeneous polynomial
  of degree one
          and $u$ is a unit vector with components $u_1, \ldots, u_g$;
  \item Each of the polynomials $p_2(x)$, $q(x)$ and $f_0(x)$ is either
$0$ or a homogeneous
          polynomial of degree two; and the matrix
$$
 E_2=     \begin{bmatrix} PQ(p_2)P & PQ(q)^T \\ Q(q)P & Q(f_0) \end{bmatrix}
$$
    is positive semidefinite, where $P=I-uu^T$ is the orthogonal projection
    onto the orthogonal complement of $u$.
\end{enumerate}

\medskip

\noindent
If in addition $p$ has degree three, then $f_0=0$ and
$q(x)= f_1(x) \varphi(x)$ for some homogeneous polynomial $f_1(x)$
of degree one.

\medskip

\noindent Finally,
  $\sigma_-^{min}(p^{\prime\prime}) =0$
if and only if $d\le 2$ and $p_2(X)\succeq 0$  for every
$X\in\gtupn$.
\end{theorem}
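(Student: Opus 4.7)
The plan is to reduce the statement to one about the \emph{middle matrix} $\cZ$ of $p''$, as outlined in subsection \ref{sec:middle}. One writes
\beq
p''(x)[h] = V(x)[h]^T \, \cZ \, V(x)[h],
\eeq
where $V(x)[h]$ is a fixed column vector of monomials that are linear in $h$ and polynomial in $x$, and $\cZ$ is a scalar symmetric matrix satisfying $\smin_\pm(p'') = \mu_\pm(\cZ)$. The hypothesis $\smin_-(p'') \le 1$ becomes $\mu_-(\cZ) \le 1$, and by Theorem \ref{thm:abs} we have $d \le 4$; accordingly $\cZ$ is block-partitioned by the degree in $x$ of the border monomials.

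For the necessity direction the core task is to extract, from the single allowed negative eigenvalue of $\cZ$, one distinguished unit vector $u \in \RR^g$ through which the top-degree parts of $p$ all factor. The highest-degree block of $\cZ$ carries the coefficients of $p_d$; the ``partial factoring'' refinements of Theorem \ref{thm:abs} alluded to in subsection \ref{sec:prior} apply in our boundary-equality regime (when $d = 4$ the hypothesis forces $\smin_- = 1$) and drive this block to have one-dimensional column space, pinning down $\varphi = \sum u_j x_j$ such that $p_4 = \varphi f_0 \varphi$. A parallel argument on the cross block of $\cZ$ coupling degree $2$ and degree $1$ entries of $V$ shows that $p_3$ factors through the \emph{same} $\varphi$, giving $p_3 = \varphi q + q^T \varphi$. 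After substituting these factorizations into $\cZ$ and Schur-complementing out the single negative direction, the residual positive part of $\cZ$ becomes precisely the block matrix $E_2$, with the projection $P = I - uu^T$ arising because the $\varphi$-direction has already been consumed by the negative square.

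For sufficiency I substitute the decomposition (\ref{eq:nov18a6X}) directly into $p''(x)[h]$ and, introducing $\hat\varphi := \sum u_j h_j$, collect terms by their $x$-degree. The block inequality $E_2 \succeq 0$ supplies a Cholesky-type factorization with which one completes the square in the nc variables, writing $p''$ as a sum of ordinary squares together with one negative square of the form $-\bigl(\hat\varphi \,\varphi(x) + \cdots\bigr)^T\bigl(\hat\varphi\, \varphi(x) + \cdots\bigr)$. This yields an explicit SDS with exactly one negative square, so $\smin_-(p'') \le 1$.

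The two addenda follow cleanly. If $\deg p = 3$, then $p_4 = \varphi f_0 \varphi = 0$; since $\RRx$ is a domain and $\varphi \ne 0$, we deduce $f_0 = 0$, so the $(2,2)$-block $Q(f_0)$ of $E_2$ vanishes. Any positive semidefinite block matrix with a zero diagonal block must have the matching off-diagonal block zero as well, so $Q(q)P = 0$, meaning each row of $Q(q)$ is a scalar multiple of $u$; this is exactly the condition $q(x) = f_1(x)\,\varphi(x)$ for some degree-one $f_1$. For the $\smin_-(p'') = 0$ clause: by \eqref{eq:aug27a8} this is matrix convexity of $p$, so the Helton--McCullough theorem [HM04] cited at the start of subsection \ref{sec:prior} forces $d \le 2$; and then $p''(x)[h] = p_2''(x)[h]$ is a constant-in-$x$ quadratic form in $h$ whose nonnegativity as a polynomial is equivalent to $Q(p_2) \succeq 0$, which in turn is equivalent to $p_2(X) \succeq 0$ for every $X \in \gtupn$. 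The hard part is expected to be the rigidity step in the necessity direction: ruling out configurations in which $p_4$ and $p_3$ factor through different linear forms, so that the single negative direction in $\cZ$ aligns both factorizations along the same $u$. That forced alignment is where the detailed middle-matrix analysis to be carried out in the body of the paper must do its work.
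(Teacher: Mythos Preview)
Your approach is essentially the paper's: reduce $\sigma_-^{min}(p'')\le 1$ to $\mu_-(\cZ)\le 1$ for the scalar middle matrix, use rank and Schur-complement arguments on the blocks of $\cZ$ to extract the unit vector $u$ and the positive-semidefinite residual, and translate back to the polynomial. One small slip: the representation is $p''(x)[h]=V(x)[h]^T Z(x) V(x)[h]$ with a matrix \emph{polynomial} $Z(x)$, and $\cZ=Z(0)$ is only polynomially congruent to it; this is harmless since you correctly invoke $\smin_-(p'')=\mu_-(\cZ)$.

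Two tactical differences worth flagging. For sufficiency the paper does not build an explicit SDS; instead it computes the middle matrix of each piece of the decomposition (Lemmas \ref{lem:jun12a7}, \ref{lem:p3form}, \ref{lem:nov10a6}) and checks that the resulting $\cZ$ fits the structural form of Theorem \ref{thm:mainZ}, whence $\mu_-(\cZ)\le 1$. Your completing-the-square route should also succeed but is less mechanical. For the degree-three addendum your argument (the $(2,2)$-block $Q(f_0)$ of $E_2$ vanishes, so positive semidefiniteness kills $Q(q)P$) is actually cleaner than the paper's, which instead traces through the $\cZ_{01}$ decomposition of Lemma \ref{lem:UAU} to get the auxiliary vector $v=0$ and then reads off $q=f_1\varphi$. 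You have correctly identified the one genuinely delicate point: that the $p_4$- and $p_3$-factorizations are forced to share the same $u$, which in the paper is Lemma \ref{lem:UAU}(4) and hinges on $E\succeq 0$ forcing $P\cZ_{01}=P\cZ_{01}UU^T$.
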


\begin{proof}
  The proof is in \S \ref{subsec:oct12b8}.
\end{proof}

\subsubsection{The Relaxed Hessian}
\label{subsec:relax}
 In applications to
  convex sublevel sets and the curvature of noncommutative real varieties
 it is useful to introduce a variant of the Hessian
 for  symmetric polynomials $p$ of degree $d$ in $g$ non-commuting
  variables. Let $V_k(x)[h]$ denote the vector of polynomials with entries
$h_jm(x)$, where $m(x)$ runs through the set of $g^k$ monomials of
length $k$,
$j=1,\ldots,g.$ The order of the entries (which is irrelevant for the
moment) is fixed in Section \ref{sec:kronDef} via formula (\ref{eq:vjt}).
Thus, $V_k=V_k(x)[h]$ is a vector of height $g^{k+1}$, and the vectors
\begin{equation}
\label{eq:may14a7}
V(x)[h]=\textup{col}(V_0,\ldots,V_{d-2})\quad \textrm{and}\quad
\tV(x)[h]=\textup{col}(V_0,\ldots,V_{d-1})
\end{equation}
are vectors of height $g\nu$ and $g\widetilde{\nu}$, respectively, where
\begin{equation}
\label{eq:may14b7}
\nu=1+g+\cdots+g^{d-2}\quad \textrm{and}\quad \widetilde{\nu}=
1+g+\cdots+g^{d-1}\,.
\end{equation}
Note that
$$
  \tV(x)[h]^T \tV(x)[h] = \sum_{j=1}^g
  \sum_{|m|\le d-1} \; m(x)^T h_j^2 m(x),
$$
 where $|m|$ denotes the degree (length) of the monomial $m$.

 The {\bf relaxed Hessian} of the symmetric nc polynomial
 $p$ of degree $d$ is the polynomial
\begin{equation}
 \label{eq:relaxedHessian}
   p^{\prime\prime}_{\lambda, \delta}:=
      p^{\prime\prime}(x)[h]+ \delta \, \tV(x)[h]^T \tV(x)[h]
         +\lambda \,  p^{\prime}(x)[h]^T p^\prime(x)[h],
\end{equation}
in which $\lambda,\, \delta\in\RR$.
We say that the {\bf  relaxed Hessian is positive} at
$(X,v)\in \gtupn\times\RR^n$ if
 for each $\delta>0$ there is a $\lambda>0$ so that
$$
   0\le
    \langle p^{\prime\prime}_{\lambda, \delta}(X)[H]v,v\rangle
$$
 for all $H\in\gtupn$. Note that
$$
\lambda_1\ge \lambda\quad\textrm{and}\quad \delta_1\ge \delta \Longrightarrow
\langle p^{\prime\prime}_{\lambda_1, \delta_1}(X)[H]v,v\rangle \ge
\langle p^{\prime\prime}_{\lambda, \delta}(X)[H]v,v\rangle.
$$

The next theorem says that positivity of the relaxed Hessian plus a type
of irreducibility implies $p$ has degree at most two.

\begin{theorem}
 \label{thm:localq}
  Let $p$ be a symmetric nc polynomial of degree $d$ in
  $g$ symmetric variables.
  Suppose
\begin{equation}
\label{eq:may13c7}
n>\frac{1}{2}g\widetilde{\nu}(\widetilde{\nu}-1)\,,
\end{equation}
$X\in\gtupn,$  $v\in\mathbb R^n,$ and
  there is no nonzero polynomial $q$ (not necessarily symmetric)
  of degree less than $d$ such that $q(X)v=0$.
  If the relaxed Hessian is positive at $(X,v)$,
  then  $p$ is convex and has degree at most two.
\end{theorem}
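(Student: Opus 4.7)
The plan is to suppose, towards a contradiction, that $p$ has degree $d\ge 3$, extract strong restrictions from the relaxed Hessian positivity at $(X,v)$, and derive a contradiction with the structural results on the middle matrix $\cZ$. Once $d\le 2$ is established, a short additional argument yields convexity.

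I would first recast the hypothesis as a linear-algebraic condition in $H$. Define the linear map $A\colon\gtupn\to\mathbb R^{gn\widetilde{\nu}}$ by $A(H):=\tV(X)[H]v$, the linear map $B\colon\gtupn\to\mathbb R^n$ by $B(H):=p'(X)[H]v$, and the quadratic form $C(H):=\langle p''(X)[H]v,v\rangle$. The relaxed Hessian positivity at $(X,v)$ reads: for every $\delta>0$ there exists $\lambda>0$ such that
\[
  C(H)+\delta\|A(H)\|^2+\lambda\|B(H)\|^2\ge 0 \quad\text{for all }H\in\gtupn.
\]
Restricting to the common kernel $K:=\ker A\cap\ker B$ makes both the $\delta$ and $\lambda$ terms vanish, so the inequality collapses to $C|_K\ge 0$.

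The next step is a dimension count to ensure $K$ is large enough to matter. Expressing $p''(x)[h]$ in its middle-matrix form $W(x)[h]^T\cZ\, W(x)[h]$ (as in Section \ref{sec:middle}, with $W$ a border vector whose components include those of $\tV$), we obtain
\[
  C(H)=\langle\cZ\, W(X)[H]v,\,W(X)[H]v\rangle.
\]
Under the linear independence hypothesis on $\{m(X)v:|m|\le d-1\}$, the map $A$ has image of codimension exactly $\tfrac12 g\widetilde{\nu}(\widetilde{\nu}-1)$ in $\mathbb R^{gn\widetilde{\nu}}$: the sole obstructions to realizing a prescribed target are the symmetry relations $\langle H_j m(X)v,\,m'(X)v\rangle=\langle m(X)v,\,H_j m'(X)v\rangle$, one per unordered pair $(m,m')$ per index $j$. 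Correspondingly $\ker A$ has dimension $\tfrac{g}{2}(n-\widetilde{\nu})(n-\widetilde{\nu}+1)$. The hypothesis $n>\tfrac12 g\widetilde{\nu}(\widetilde{\nu}-1)$ is precisely what guarantees that, even after intersecting with $\ker B$, the evaluation map $H\mapsto W(X)[H]v$ restricted to $K$ still surjects onto the top-degree block of the range of $W(X)[\cdot]v$ — a general-position type statement.

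With this in hand the contradiction for $d\ge 3$ is immediate: by the structural analysis of $\cZ$ in the spirit of Theorem \ref{thm:abs}, the top-degree block of $\cZ$ carries a nonzero negative part, and the surjectivity just obtained produces $H\in K$ with $\langle p''(X)[H]v,v\rangle<0$, contradicting $C|_K\ge 0$. Hence $d\le 2$. When $d\le 2$ the Hessian $p''$ is independent of $x$ and equals $2p_2$, and the relaxed positivity together with $C|_K\ge 0$ on the now very large kernel $K$, combined with the freedom to vary $H$ and the density of the orbit of $v$ under $\{m(X)\}$, forces $p_2(H)\succeq 0$ on a test set ample enough to conclude matrix convexity. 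The main obstacle throughout is the dimension count: one must verify that the codimension-$\tfrac12 g\widetilde{\nu}(\widetilde{\nu}-1)$ image of $A$ does not conspire with $\ker B$ or with the structure of $\cZ$ to make $K$ miss the negative part of $\cZ$, and the stated threshold on $n$ is exactly what makes the required general-position argument go through.
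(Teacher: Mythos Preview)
Your approach has a fatal gap at the very first step. You restrict to $K=\ker A\cap\ker B$, where $A(H)=\tV(X)[H]v$. But the border vector $V(x)[h]$ in the representation $p''(x)[h]=V(x)[h]^T Z(x)V(x)[h]$ has components $V_0,\dots,V_{d-2}$, which form a \emph{sub-vector} of $\tV(x)[h]=\textup{col}(V_0,\dots,V_{d-1})$; you have this inclusion reversed when you say ``$W$ a border vector whose components include those of $\tV$''. Consequently, for every $H\in\ker A$ one has $V(X)[H]v=0$, and therefore
\[
  C(H)=\langle p''(X)[H]v,v\rangle =\bigl(V(X)[H]v\bigr)^T Z(X)\bigl(V(X)[H]v\bigr)=0.
\]
So $C|_K\equiv 0$ identically; the claimed surjection of $H\mapsto W(X)[H]v$ onto any ``top-degree block'' while $H$ ranges over $K$ is impossible (that map is the zero map on $K$), and no $H\in K$ with $C(H)<0$ can exist. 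The inequality $C|_K\ge 0$ carries no information.

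The paper's argument runs in the opposite direction: it uses the \emph{image} of $A$, not its kernel. The CHSY Lemma together with the linear-independence hypothesis and the bound $n>\tfrac12 g\widetilde{\nu}(\widetilde{\nu}-1)$ give $\textup{codim}\{\tV(X)[H]v:H\in\gtupn\}\le n-1$, which is the hypothesis of Theorem~\ref{thm:local}. That theorem then exploits the polynomial congruence $Z_\lambda(x)=S(x)^T\cZ_\lambda S(x)$ (with $S$ independent of $\lambda$): positivity of the relaxed Hessian forces $\cZ_\lambda\otimes I_n+\delta F(X)$ to have at most $n-1$ negative eigenvalues, its $\lambda$-independent upper-left block $\cZ\otimes I_n+\delta\widetilde F(X)$ inherits that bound, and letting $\delta\downarrow 0$ yields $n\mu_-(\cZ)=\mu_-(\cZ\otimes I_n)\le n-1$, i.e.\ $\mu_-(\cZ)=0$. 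The $\delta$ term is not a nuisance to be eliminated by passing to $\ker A$; it is what converts positivity on a large-codimension-complement subspace into a global eigenvalue count that survives the limit.
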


 Theorem \ref{thm:localq} in turn follows from
 the following theorem together with an
 application of the CHSY-Lemma (see Lemma \ref{lem:CHSY}).

\begin{theorem}
 \label{thm:local}
  Let $p$ be a symmetric nc polynomial of degree $d$ in
  $g$ symmetric variables.
  Suppose  $X\in\gtupn$ and  $v\in\mathbb R^n.$
  If the set of vectors $\{\tV(X)[H]v: H\in \gtupn\}$
  has codimension at most $n-1$ in $\mathbb R^{ng\widetilde{\nu}}$
  and if the relaxed Hessian is positive at $(X,v)$,
  then $\sigma_-^{min}(p^{\prime\prime})=0$ and hence
$p$ is convex and has degree at most two.
\end{theorem}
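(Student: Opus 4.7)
The plan is to work with the middle-matrix representation of $p^{\prime\prime}$. Recall from subsection \ref{sec:middle} that there is a (unique) symmetric matrix $\cZ$, indexed by pairs $(j,m)$ with $|m|\le d-2$, such that $p^{\prime\prime}(x)[h]=V(x)[h]^T\cZ V(x)[h]$, and that $\sigma_-^{min}(p^{\prime\prime})=\mu_-(\cZ)$. The theorem will follow once $\cZ\succeq 0$ is established, since matrix convexity (via (\ref{eq:aug27a8})) together with the Helton--McCullough theorem [HM04] then forces $\deg p\le 2$.

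First, recast the hypothesis as linear algebra on the common ambient space $\RR^{ng\widetilde\nu}$. Padding $\cZ$ by zero rows and columns on the extra degree-$(d-1)$ coordinates produces a symmetric $\widetilde\cZ$ with $p^{\prime\prime}(X)[H]=\tV(X)[H]^T\widetilde\cZ\,\tV(X)[H]$ and $\mu_-(\widetilde\cZ)=\mu_-(\cZ)$. Expanding $p^\prime(x)[h]=\sum c_{j,m_1,m_2}\,m_1(x)h_jm_2(x)$ produces a linear map $B=B(X)\colon\RR^{ng\widetilde\nu}\to\RR^n$ whose $(j,m_2)$-block is $\sum_{m_1}c_{j,m_1,m_2}\,m_1(X)$ and which satisfies $p^\prime(X)[H]v=B\,\tV(X)[H]v$. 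Setting $\mathcal{S}=\{\tV(X)[H]v:H\in\gtupn\}$, the relaxed-Hessian positivity at $(X,v)$ reads: for every $\delta>0$ there exists $\lambda>0$ such that
\begin{equation*}
\langle(\widetilde\cZ+\delta I+\lambda B^TB)w,w\rangle\ge 0\quad\text{for every } w\in\mathcal{S}.
\end{equation*}
Specialising to $w\in\mathcal{S}\cap\ker B$ (so that the $\lambda B^TB$ term drops) and letting $\delta\to 0^+$ yields $\langle\widetilde\cZ w,w\rangle\ge 0$ on $\mathcal{S}\cap\ker B$. Because the bottom block of $\widetilde\cZ$ vanishes, this is equivalent to $\langle\cZ w_0,w_0\rangle\ge 0$ for every $w_0$ in the projection $V_0:=\Pi(\mathcal{S}\cap\ker B)\subseteq\RR^{ng\nu}$.

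A standard dimension count gives $\mathrm{codim}(\mathcal{S}\cap\ker B)\le (n-1)+n=2n-1$ in $\RR^{ng\widetilde\nu}$, and hence $\mathrm{codim}(V_0)\le 2n-1$ in $\RR^{ng\nu}$. Since $\cZ\otimes I_n$ has exactly $n\,\mu_-(\cZ)$ negative eigenvalues and is positive semidefinite on $V_0$, interlacing yields $n\,\mu_-(\cZ)\le 2n-1$, whence $\mu_-(\cZ)\le 1$. The final and most delicate step is to rule out $\mu_-(\cZ)=1$: were this to hold, Theorem \ref{thm:mainp} would force $p$ into the explicit form $p_0+p_1+p_2+\varphi q+q^T\varphi+\varphi f_0\varphi$ of degree three or four, and one then computes $p^{\prime\prime}$ for this form and verifies directly that the codimension hypothesis on $\mathcal{S}$ and positivity of the relaxed Hessian at $(X,v)$ cannot both hold. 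This verification is the main obstacle, and it rests on the fine block structure of $\widetilde\cZ$ and of $B$ --- whose top block consists of scalar matrices $c\,I_n$ with coefficients drawn from $p_d$ --- as organised by the border-vector decomposition. Once $\mu_-(\cZ)=0$ is secured, the stated conclusions follow from (\ref{eq:aug27a8}) and [HM04].
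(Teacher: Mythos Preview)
Your argument has a genuine gap at the very place you flag as ``the main obstacle'': you establish only $\mu_-(\cZ)\le 1$ and then assert, without proof, that the case $\mu_-(\cZ)=1$ can be excluded by invoking Theorem \ref{thm:mainp} and a direct verification. No such verification is carried out, and it is far from clear how to carry it out. Theorem \ref{thm:mainp} allows $p$ of degree two, three, or four when $\sigma_-^{min}(p'')=1$, and in each case one must produce a contradiction using only the single pair $(X,v)$ and the codimension hypothesis on $\mathcal S$. Nothing in your outline explains why, for example, a degree-two $p$ with $\mu_-(\cZ_{00})=1$ cannot have a positive relaxed Hessian at such a pair. This is not a routine computation.

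The reason your bound degrades from $n-1$ to $2n-1$ is that you discard the $\lambda$ term by restricting to $\ker B$, paying an extra codimension of $n$. The paper avoids this loss by exploiting Theorem \ref{thm:absModHess}: the middle matrix $Z_\lambda(x)$ of the modified Hessian is polynomially congruent, via a matrix $S(x)$ \emph{independent of $\lambda$}, to the block-diagonal constant matrix
\[
\cZ_\lambda=\begin{bmatrix}\cZ & 0\\ 0 & \lambda\,\psi_{d-1}(0)\psi_{d-1}(0)^T\end{bmatrix}.
\]
Writing $Z_\lambda(X)+\delta I=S(X)^T\bigl(\cZ_\lambda\otimes I_n+\delta F(X)\bigr)S(X)$ with $F=(S^T)^{-1}S^{-1}$, positivity of the relaxed Hessian on $\mathcal S$ transfers to positivity of $\cZ_\lambda\otimes I_n+\delta F(X)$ on $S(X)\mathcal S$, which still has codimension at most $n-1$ since $S(X)$ is invertible. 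Hence $\cZ_\lambda\otimes I_n+\delta F(X)$ has at most $n-1$ negative eigenvalues; passing to the upper-left principal block kills the $\lambda$-dependent part and leaves $\cZ\otimes I_n+\delta\widetilde F(X)$ with at most $n-1$ negative eigenvalues for every $\delta>0$. Letting $\delta\to 0^+$ gives $n\,\mu_-(\cZ)\le n-1$, so $\mu_-(\cZ)=0$ directly, with no case analysis required. The key structural point you are missing is that the $\lambda$-contribution can be confined to a block orthogonal to $\cZ$ rather than merely being of low rank.
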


\begin{proof}
 Theorems \ref{thm:localq} and \ref{thm:local} are established in section
\ref{sec:relax}.
\end{proof}

 We end this subsubsection with the following generalization
 of one of the main results of \cite{HM04}.
 The {\bf noncommutative $\epsilon$-neighborhood of $0$}
 is the graded set $\mathcal U=\cup_{n\ge 1}\mathcal U_n$
 with
$$
  \mathcal U_n =\{(X,v): X\in\gtupn, \, \|X\|<\epsilon, \, v\in\mathbb R^n\}
$$
 (where $\|\cdot\|$ is any norm on $\gtupn$).

 Given a (graded) subset $S=\cup_{n\ge 1} S_n$ of
$\cup_{n\ge 1}(\gtupn \times \mathbb R^n)$,
 we say that the {\bf relaxed Hessian is positive on $S$} if
 it is positive at each $(X,v)\in S$.

\begin{cor}
 \label{cor:tomainModHess}
   Let $p$ be a given symmetric nc polynomial of degree $d$ in $g$
   symmetric variables. If there is an $\epsilon>0$
   so that the relaxed Hessian is
   positive on a noncommutative $\epsilon$-neighborhood of $0$,
   then the degree of $p$ is at most two and
   $p$ is convex.
\end{cor}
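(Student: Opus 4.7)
The plan is to reduce the corollary to Theorem~\ref{thm:localq} by exhibiting a single pair $(X,v)$ lying inside the noncommutative $\epsilon$-neighborhood of $0$ that satisfies the other hypotheses of that theorem: $n>\tfrac12 g\widetilde\nu(\widetilde\nu-1)$, and no nonzero polynomial $q$ of degree less than $d$ has $q(X)v=0$. Positivity of the relaxed Hessian at any such $(X,v)$ is then automatic from the assumption, and the theorem delivers the conclusion.

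The construction I have in mind first produces the pair in dimension $\widetilde\nu$ and then pads. Let $W$ be the Euclidean space of dimension $\widetilde\nu$ with orthonormal basis $\{e_m\}$ indexed by the monomials in $x_1,\dots,x_g$ of length at most $d-1$. Define the truncated left shift $A_i\colon W\to W$ by $A_i e_m=e_{x_i m}$ when $|x_i m|\le d-1$ and $A_i e_m=0$ otherwise, set $X_i^{(0)}=A_i+A_i^T$, and let $v\in W$ be the basis vector corresponding to the empty monomial. An induction on the length of $m$ shows that
\[
m(X^{(0)})v \;=\; e_m + \sum_{|m''|<|m|} c_{m,m''}\,e_{m''}
\quad\text{for every monomial $m$ with $|m|\le d-1$,}
\]
because the $A_i^T$ summands can only contribute basis elements of strictly smaller word length. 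Reading off the top-degree coefficient then gives $q(X^{(0)})v=0\Longrightarrow q=0$ on polynomials of degree less than $d$. Now choose any $n>\tfrac12 g\widetilde\nu(\widetilde\nu-1)$ and extend each $X_i^{(0)}$ by zero to a symmetric operator on $\RR^n$; both symmetry and the non-annihilation property at $v$ are preserved. A final rescaling $X\mapsto\lambda X$ with $\lambda>0$ small brings $\|X\|<\epsilon$ without spoiling non-annihilation, which is invariant under any nonzero scalar.

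With this $(X,v)$ all hypotheses of Theorem~\ref{thm:localq} are in force, and that theorem yields that $p$ is convex and has degree at most two. The principal technical step, and the obstacle that must be handled with care, is the induction establishing injectivity of $q\mapsto q(X^{(0)})v$ on polynomials of degree less than $d$: one must verify that the cross-terms produced by the $A_i^T$ pieces all live in strictly smaller word length and therefore cannot cancel the leading $e_m$. The remainder is bookkeeping (padding and rescaling) together with a direct appeal to Theorem~\ref{thm:localq}.
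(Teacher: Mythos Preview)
Your argument is correct. The shift-operator construction does give a tuple $X^{(0)}$ and vector $v$ with $\{m(X^{(0)})v:|m|\le d-1\}$ linearly independent (your induction is the right one: $A_i$ raises word length by one and $A_i^T$ lowers it, so the leading term $e_m$ survives), and the padding and rescaling steps preserve both symmetry and the non-annihilation property while forcing $\|X\|<\epsilon$ and $n>\tfrac12 g\widetilde\nu(\widetilde\nu-1)$. Theorem~\ref{thm:localq} then applies directly.

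The paper proceeds differently. Rather than construct $(X,v)$ explicitly, it invokes Lemma~\ref{lem:directsums}: the noncommutative $\epsilon$-neighborhood is closed under direct sums, and since no nonzero polynomial of degree $\le d-1$ can vanish identically on it, the contrapositive of that lemma produces some $(X,v)\in\mathcal U_n$ with $\{m(X)v:|m|\le d-1\}$ linearly independent. The required size condition on $n$ is then arranged by taking a $t$-fold direct sum $\oplus_1^t(X,v)$, which stays in the neighborhood because the norm is unchanged. Your route is more concrete and self-contained---it avoids the appeal to Lemma~\ref{lem:directsums} and the background fact that nc polynomials are determined by their matrix evaluations---whereas the paper's route is shorter once that machinery is in place and transparently generalizes to any graded set closed under direct sums. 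Both feed into Theorem~\ref{thm:localq} in the same way.
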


\begin{proof}
  The corollary is proved in subsection \ref{subsec:oct16a8}.
\end{proof}

\subsubsection{Partial positivity of the Hessian}
  In this section we consider variants
  and corollaries of Theorem \ref{thm:local}
  and its consequence Theorem \ref{thm:localq}
  pertaining to the nonnegativity
  hypotheses on the Hessian of the symmetric polynomial
  $p$  which imply bounds on
  the negative signature $\sigma_-(p^{\prime\prime}).$

  Given a subspace $\mathcal H\subset \gtupn$, we say that
  the {\bf Hessian of $p$ is positive relative to $\mathcal H$}
  at $(X,v)$ if for each $H\in\mathcal H$,
 $$
   0 \le \langle p^{\prime\prime}(X)[H]v,v\rangle.
 $$


\begin{theorem}
 \label{thm:regHess-k}
  Let $p$ be a symmetric nc polynomial of degree $d$ in
  $g$ symmetric variables and let $k$ be a given
  positive integer.
  Suppose  $X\in\gtupn,$ $v\in\mathbb R^n,$ and
  $\mathcal H$ is a subspace of $\gtupn$.
  If the set of vectors $\{V(X)[H]v: H\in \mathcal H \}$
  has codimension at most $kn-1$ in $\mathbb R^{ng\nu}$
  and if the Hessian of $p$ is positive relative
   to $\mathcal H$ at $(X,v)$,
  then $\sigma_-^{min}(p^{\prime\prime})< k$.
  \end{theorem}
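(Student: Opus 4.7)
The plan is to reduce the claim to a short dimension count in $\mathbb R^{ng\nu}$ by passing through the middle-matrix representation of the Hessian developed in subsection \ref{sec:middle}. Specifically, one records the quadratic-form identity
\[
 \langle p^{\prime\prime}(X)[H]v,\,v\rangle \;=\;
   \bigl\langle(\mathcal Z\otimes I_n)\,V(X)[H]v,\;V(X)[H]v\bigr\rangle,
\]
valid for every $X\in\gtupn$, $H\in\gtupn$, and $v\in\mathbb R^n$, where $\mathcal Z$ is a symmetric $g\nu\times g\nu$ matrix associated to $p$ and whose negative inertia equals $\sigma_-^{\min}(p^{\prime\prime})$. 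Tensoring with $I_n$ preserves the sign pattern, so $\mathcal Z\otimes I_n$ has a negative-definite subspace of dimension exactly $n\,\mu_-(\mathcal Z)=n\,\sigma_-^{\min}(p^{\prime\prime})$ in $\mathbb R^{ng\nu}$.

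The hypothesis and desired conclusion are then translated to this linear-algebraic setting. Set $W:=\{V(X)[H]v:H\in\mathcal H\}\subseteq\mathbb R^{ng\nu}$; linearity of the map $H\mapsto V(X)[H]v$ makes $W$ a subspace, and the codimension hypothesis gives $\dim W\ge ng\nu-(kn-1)=ng\nu-kn+1$. The assumption that the Hessian is positive relative to $\mathcal H$ at $(X,v)$ becomes
\[
 \langle(\mathcal Z\otimes I_n)w,w\rangle \;\ge\; 0 \qquad\text{for every }w\in W.
\]
Suppose for contradiction that $\sigma_-^{\min}(p^{\prime\prime})\ge k$, and pick any $k$-dimensional $N^-\subset\mathbb R^{g\nu}$ on which $\mathcal Z$ is negative definite. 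Then $\mathcal Z\otimes I_n$ is negative definite on $N^-\otimes\mathbb R^n$, a subspace of dimension $kn$, and
\[
 \dim W+\dim(N^-\otimes\mathbb R^n)\ge(ng\nu-kn+1)+kn\;=\;ng\nu+1>ng\nu,
\]
so $W\cap(N^-\otimes\mathbb R^n)\ne\{0\}$. Any nonzero $w$ in this intersection makes $\langle(\mathcal Z\otimes I_n)w,w\rangle$ simultaneously nonnegative (as $w\in W$) and strictly negative (as $w\in N^-\otimes\mathbb R^n$), a contradiction. Hence $\sigma_-^{\min}(p^{\prime\prime})<k$.

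The main obstacle is concentrated in the first step, namely the middle-matrix machinery: one needs the middle matrix of $p^{\prime\prime}$ to be realizable as a $g\nu\times g\nu$ symmetric matrix coupled to $V(x)[h]$ by the tensor-with-$I_n$ rule displayed above, and to know that its negative inertia is exactly $\sigma_-^{\min}(p^{\prime\prime})$. These are the substantive outputs of subsection \ref{sec:middle}; once in hand, the proof reduces to the dimension count above, in which the bound $kn-1$ on the codimension is precisely what is required to guarantee a one-dimensional overlap with the $kn$-dimensional negative cone and so close out the contradiction.
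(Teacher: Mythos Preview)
Your overall strategy---reduce to an inertia/dimension count in $\mathbb R^{ng\nu}$ via the middle-matrix representation---is exactly the paper's, but the displayed quadratic-form identity is not what subsection \ref{sec:middle} actually provides. The middle matrix $Z(x)$ is a matrix \emph{polynomial} in $x$, not the constant $\cZ=Z(0)$; the correct identity is
\[
 \langle p^{\prime\prime}(X)[H]v,\,v\rangle
   \;=\;\bigl\langle Z(X)\,V(X)[H]v,\;V(X)[H]v\bigr\rangle,
\]
and in general $Z(X)\ne\cZ\otimes I_n$. What subsection \ref{sec:middle} supplies instead is the polynomial congruence \eqref{eq:may13a7}, namely $Z(X)=B(X)^T(\cZ\otimes I_n)B(X)$ with $B(X)$ invertible, together with the inertia relation \eqref{eq:apr30a7}, $\mu_\pm(Z(X))=n\,\mu_\pm(\cZ)$.

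The repair is immediate and leaves your dimension count intact. Either run the argument with $Z(X)$ in place of $\cZ\otimes I_n$: positivity of $Z(X)$ on a subspace of codimension at most $kn-1$ forces $\mu_-(Z(X))\le kn-1$, hence $n\,\mu_-(\cZ)\le kn-1$ and $\mu_-(\cZ)<k$---this is precisely the paper's route. Or keep $\cZ\otimes I_n$ but replace $W$ by $B(X)W$, which has the same codimension since $B(X)$ is invertible, and then your intersection argument with $N^-\otimes\mathbb R^n$ goes through verbatim.
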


 The next theorem follows from Theorem \ref{thm:regHess-k}
 in much the same way that Theorem \ref{thm:localq}
  follows from Theorem \ref{thm:local}.

\begin{theorem}
 \label{thm:regHessq-k}
  Let $p$ be a symmetric  nc polynomial of degree $d$
  in $g$ symmetric variables and fix
  a positive integer $k$. Suppose
\begin{equation}
\label{eq:aug20a7}
n> g\frac{\nu(\nu-1)}{2}
\end{equation}
  $X\in\gtupn,$   $v\in\mathbb R^n,$
  and  there does not exist a nonzero
   polynomial $q$ (not necessarily symmetric)
   of degree less than or equal to
   $d-2$ such that $q(X)v=0.$ If $\mathcal H$ is a subspace
  of $\gtupn$ of codimension at most $kn$ and if
  the Hessian of $p$ is positive relative to $\mathcal H$ at
  $(X,v)$,
  then $\sigma_-^{min}(p^{\prime\prime}) < k+1.$
\end{theorem}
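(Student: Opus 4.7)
The plan is to derive Theorem \ref{thm:regHessq-k} from Theorem \ref{thm:regHess-k} applied with parameter $k+1$ in place of $k$. The positivity of the Hessian relative to $\mathcal H$ transfers verbatim, so the only thing left to verify is that the image $\{V(X)[H]v:H\in\mathcal H\}$ has codimension at most $(k+1)n-1$ in $\mathbb R^{ng\nu}$. Theorem \ref{thm:regHess-k} will then yield $\sigma_-^{min}(p^{\prime\prime})<k+1$, as required.

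The heart of the argument is a codimension computation for the linear map $\Phi:\gtupn\to\mathbb R^{ng\nu}$ given by $H\mapsto V(X)[H]v$. Ordering the $\nu$ monomials $m$ of degree at most $d-2$ compatibly with the blocks of $V$, set $v_m:=m(X)v$. The hypothesis that no nonzero polynomial of degree at most $d-2$ annihilates $v$ is precisely the statement of linear independence of the $\nu$ vectors $\{v_m\}$ in $\mathbb R^n$. Since distinct components $H_j$ feed into disjoint blocks of $V(X)[H]v$, $\Phi$ splits as a direct sum, across $j=1,\dots,g$, of linear maps $\psi_j:\mathbb R^{n\times n}_{sym}\to\mathbb R^{n\nu}$, $H_j\mapsto (H_j v_m)_m$. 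A symmetric matrix annihilates every $v_m$ iff it vanishes on $U:=\mathrm{span}\{v_m\}$, and by symmetry such a matrix preserves $U^\perp$, so $\dim\ker\psi_j=(n-\nu)(n-\nu+1)/2$. A routine computation then shows $\mathrm{im}\,\psi_j$ has codimension $\nu(\nu-1)/2$ in $\mathbb R^{n\nu}$, and summing over $j$, the image of $\Phi$ has codimension $g\nu(\nu-1)/2$ in $\mathbb R^{ng\nu}$.

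Since $\mathcal H$ has codimension at most $kn$ in $\gtupn$, the image $\Phi(\mathcal H)$ inherits codimension at most $g\nu(\nu-1)/2+kn$ in $\mathbb R^{ng\nu}$. Invoking the hypothesis (\ref{eq:aug20a7}), which asserts $g\nu(\nu-1)/2\le n-1$, this bound collapses to $(k+1)n-1$, exactly the threshold required by Theorem \ref{thm:regHess-k} at parameter $k+1$.

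I do not anticipate a substantial obstacle. The only nontrivial ingredient is the codimension count for $\psi_j$, which is a standard fact about the evaluation map on symmetric matrices at a linearly independent tuple. The numerical hypothesis (\ref{eq:aug20a7}) is visibly calibrated so that the deficit $g\nu(\nu-1)/2$ is absorbed into one extra factor of $n$, precisely the slack gained when bumping $k$ to $k+1$ in Theorem \ref{thm:regHess-k}; the remainder is bookkeeping between the linear algebra on $\mathcal H$ and its image under $\Phi$.
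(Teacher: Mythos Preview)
Your proposal is correct and follows the same route as the paper: reduce to Theorem~\ref{thm:regHess-k} at parameter $k+1$ by bounding the codimension of $\{V(X)[H]v:H\in\mathcal H\}$ by $g\nu(\nu-1)/2+kn\le (k+1)n-1$. The only difference is that where the paper invokes the CHSY Lemma (Lemma~\ref{lem:CHSY} with $r=d-2$, equality case) for the codimension of the full image $\{V(X)[H]v:H\in\gtupn\}$, you reprove that bound directly via the kernel computation for each $\psi_j$; this is exactly the content of that lemma, so the arguments coincide.
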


\begin{proof}
The last two theorems are proved  in subsection \ref{sec:pfregHess-k}.
\end{proof}

\begin{rem} \rm
  There are versions  of Theorem \ref{thm:regHess-k} and
  Theorem \ref{thm:regHessq-k} with the relaxed Hessian
  in place of the ordinary Hessian.
\end{rem}

\subsubsection{The modified Hessian}
  The {\bf modified Hessian}
  of a symmetric nc polynomial $p$ is the expression,
$$
 p_\lambda^{\prime\prime}(x)[h]
   =p^{\prime\prime}(x)[h] + \lambda\, p^\prime(x)[h]^T p^\prime(x)[h].
$$
  The {\bf modified Hessian is positive at $(X,v)$} if there is
  a $\lambda >0$ so that
$$
  0 \le \langle p^{\prime\prime}_{\lambda}(X)[H]v,v\rangle\quad\text{for
every}\, H\in\gtupn.
$$

  Note that if the modified Hessian of $p$ is positive at $(X,v)$,
  then so is the relaxed Hessian. Indeed, in that case
  a single  $\lambda$ suffices for all $\delta$.

  The modified Hessian will play an important intermediate
  role between the analysis of the Hessian and relaxed Hessian.

\subsection{The middle matrix}
 \label{sec:middle}
 The proofs and refinements of Theorems \ref{thm:mainp}
 and \ref{thm:localq}
 rest on a careful analysis of a certain representation
 of $p^{\prime\prime}$ which we now introduce.


In \cite{DHMjda} it was shown that if $p(x)$ is a symmetric nc polynomial of
degree $d\ge 2$ in $g$ noncommuting variables, then the Hessian
$p^{\prime\prime}(x)[h]$ admits a representation of the form
\begin{equation}
 \label{eq:defs-middle}
   \begin{split}
   p^{\prime\prime}(x)[h]=& V(x)[h]^TZ(x)V(x)[h]  \\
   =& [V^T_0, V^T_1, \ldots, V^T_\ell]
  \left[\begin{array}{ccccc}
  Z_{00}&Z_{01}& \cdots&Z_{0, \ell-1}& Z_{0\ell}\\
  Z_{10}&Z_{11}& \cdots&Z_{1, \ell-1}&0\\
  \vdots&\vdots&&\vdots&\vdots\\
  Z_{\ell 0}&0& \cdots&0&0\end{array}
  \right]
  \left[\begin{array}{c}
  V_0\\V_1\\ \vdots\\ V_{\ell}\end{array}\right],
 \end{split}
\end{equation}
in which $\ell=d-2$, $V(x)[h]$ is the border vector with vector components
$V_j(x)[h]$ of height $g^{j+1}$,
 and $Z(x)=[Z_{ij}(x)]$, $i, j=0,\ldots,d-2$,
the {\bf middle matrix},
is a symmetric matrix  polynomial with matrix polynomial entries $Z_{ij}(x)$
of size
$g^{i+1}\times g^{j+1}$ and degree no more than $(d-2)-(i+j)$
for $i+j\le d-2$ with
$Z_{ij}(x)=0$ for $i+j> d-2$.
Since $p$ is symmetric $Z_{ij}=Z_{ji}^T$ and, since $p$ has
degree $d$,   $Z_{ij}$ is constant when $i+j=d-2$.

Let $\mathcal Z=Z(0)$ for
$0\in\mathbb R^g.$ We call $\mathcal Z$ the {\bf scalar middle matrix}.
The main conclusions  from \cite{DHMjda} that are
relevant to this paper are:
\begin{enumerate}
\item[\rm(1)] $Z(x)$ is polynomially congruent to the
   scalar middle  matrix ${\cZ}=Z(0)$, i.e., there exists a matrix polynomial
$B(x)$ with an inverse $B(x)^{-1}$ that is again a matrix polynomial such that
\begin{equation}
\label{eq:may13a7}
Z(x)=B(x)^TZ(0)B(x)=B(x)^T\cZ B(x)\,.
\end{equation}
\vspace{2mm}
\item[\rm(2)] $\mu_{\pm}({\cZ})=\smin_\pm(p^{\prime\prime}(x)[h])$.
\vspace{2mm}
\item[\rm(3)] If  $X\in\gtupn$, then
\begin{equation}
\label{eq:apr30a7}
\mu_\pm (Z(X)) =n\mu_\pm(\cZ)\,.
\end{equation}
\item[\rm(4)] The degree $d$ of $p(x)$ is subject to the bound
\begin{equation}
\label{eq:nov10a6}
d\le 2\mu_{\pm}({\cZ})+2\,.
\end{equation}
\item[\rm(5)] If $p(x)=p_0(x)+\cdots+p_d(x)$ is expressed as a sum of
homogeneous polynomials $p_j(x)$ of degree $j$ for $j=0,\ldots,d$, then
the homogeneous
components $p_k$ of $p$ with $k\ge 2$ can be recovered from
$\cZ$ by the formula
\begin{equation}
\label{eq:nov10b6}
p_{i+j+2}(x)=\frac{1}{2}[x_1,\ldots,x_g]_{i+1}{\cZ}_{ij}
([x_1,\ldots,x_g]_{j+1})^T
\end{equation}
for $i+j\le d-2,$
\end{enumerate}
where $[x_1,\dots,x_g]_j$ denotes the $j$-fold Kronecker product that is
defined in  (\ref{eq:may6a7}); it is a row vector with $g^j$ entries
consisting of
all the monomials (in $x$) of degree $j$.

If $\mu_-({\cZ})\le 1$, then the bound (\ref{eq:nov10a6})
implies that $d\le 4$
and consequently that ${\cZ}$
will be a matrix of the form
\begin{equation}
\label{eq:nov10c6}
{\cZ}=\begin{bmatrix}{\cZ}_{00}&{\cZ}_{01}&{\cZ}_{02}\\
{\cZ}_{10}&{\cZ}_{11}&0\\{\cZ}_{20}&0&0\end{bmatrix}\,.
\end{equation}

The next theorem, which describes the structure of $\cZ$ when
$\mu_-(\cZ)\le 1$, requires some additional notation.
Viewing a $g\times g^2$ matrix such
as $Z_{01}$, as a $g\times g$ block matrix $C=[c_{ij}]$ with entries
$c_{ij}\in\mathbb{R}^{1\times g}$,
the {\bf structured transpose} $C^{sT}$ of
$C$ is the $g\times g$ block matrix with $ij$
entry equal to $c_{ji}$.  Thus, for example, if $g=2$ and
$$
C=\begin{bmatrix}c_{11}&c_{12}\\c_{21}&c_{22}\end{bmatrix},\quad\text{then}
\quad C^{sT}=\begin{bmatrix}c_{11}&c_{21}\\c_{12}&c_{22}\end{bmatrix}\quad
\textrm{and}\quad C^T=
\begin{bmatrix}c_{11}^T&c_{21}^T\\c_{12}^T&c_{22}^T\end{bmatrix}.
$$
A number of identities involving this definition are presented in Lemmas
\ref{lem:jun3a7}--\ref{lem:aug14c8}.

Given a $g\times n$ matrix $A,$
expressed in terms of its columns as
$$
A=\begin{bmatrix} a_1 & a_2 & \dots & a_n \end{bmatrix}\,, \quad\textrm{let}
\quad \mbox{vec}(A)=\begin{bmatrix} a_1 \\ a_2 \\ \vdots \\ a_n \end{bmatrix}
\,,
$$
and, conversely, given a vector $w\in \mathbb R^{gn}$ with entries
$a_1,\ldots,a_n\in\mathbb R^g$,
$$
 w = \begin{bmatrix} a_1 \\ a_2 \\ \vdots \\ a_n \end{bmatrix}\,,\quad
\textrm{let}\quad \mbox{mat}_g(w)=
\begin{bmatrix} a_1 & a_2 & \dots & a_n \end{bmatrix}\,.
$$
Thus, if $A$ and $w$ are as above, then
$$
\textrm{mat}_g\,\textrm{vec}\,A=A\quad\textrm{and}\quad
\textrm{vec}\,\textrm{mat}_g\,w=w\,.
$$

\begin{theorem}
\label{thm:mainZ}
Let $\cZ$ denote the scalar middle matrix of the Hessian of a symmetric
nc polynomial $p(x)$.
\begin{enumerate}
\item[\rm(I)] If $\textup{degree}\, p\ge 3$, then $\mu_-(\cZ)= 1$ if and
only if
\begin{equation*}
   \cZ=
\begin{bmatrix}
{\cZ}_{00} & {\cZ}_{01} & {\cZ}_{02} \\
{\cZ}_{10} & {\cZ}_{11} & 0 \\
{\cZ}_{20} &      0     & 0
\end{bmatrix}
\end{equation*}
where
\begin{itemize}
 \item[\rm(1)] $\cZ_{01}= u (u^T\otimes y^T)+ uv^T +(uv^T)^{sT}=\cZ_{01}^{sT}$
and $\cZ_{10}=\cZ_{01}^T$;
 \item[\rm(2)] $\cZ_{11}= (u \otimes I_g) A (u \otimes I_g)^T$; and
 \item[\rm(3)] $\cZ_{02} = u(u^T \otimes w_A^T)$; and $\cZ_{20}=\cZ_{02}^T$,
\end{itemize}
in which
\begin{itemize}
 \item[(i)]  $u,y\in \bbR^g$ and $u$ is a unit vector;
 \item[(ii)] $v\in \mathbb R^{g^2}$
   and $(\textup{mat}_g\,v)u=0$.
\item[(iii)] $A=A^T\in\RR^{g\times g}$ and $w_A=\textup{vec}(A)\in\RR^{g^2}$.
\item[(iv)] The matrix
$$
      E_1=\begin{bmatrix} P \cZ_{00} P & (\textup{mat}_g v)^T \\
      \textup{mat}_g v & A \end{bmatrix}\quad(\text{with}\quad P=I-uu^T\quad
\text{and}\quad A=U^T\cZ_{11}U)
   $$
    is positive semidefinite.
\end{itemize}
Moreover, if $\textup{degree}\, p=3$, then
in addition to the  conditions stated above, $A=0$, $w_A=0$, and
 $v=0$.
\item[\rm(II)] If $\textup{degree}\, p=2$, then $\mu_-(\cZ)= 1$ if and
only if $\cZ_{ij}=0$ for $i+j\ge 1$ and there exists a vector
$u\in\RR^g$ with $u^Tu=1$ such that $\cZ_{00}u=\lambda u$ with $\lambda <0$
and $P\cZ_{00}P\succeq 0$ for $P=I_g-uu^T$.
\item[\rm(III)] $\mu_-(\cZ)=0$ if and only if $\cZ_{ij}=0$ for $i+j\ge 1$
and $\cZ_{00}\succeq 0$.
\end{enumerate}
\end{theorem}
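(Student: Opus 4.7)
Parts (II) and (III) follow quickly from the degree bound $d\le 2\mu_-(\cZ)+2$ in (\ref{eq:nov10a6}). If $\mu_-(\cZ)=0$, then $d\le 2$, so $\cZ$ collapses to its $(0,0)$-block, which is forced to be psd; this is (III). If $d=2$, then $\cZ=\cZ_{00}$, and the spectral theorem applied to a symmetric matrix with exactly one negative eigenvalue produces the unit eigenvector $u$ with eigenvalue $\lambda<0$ and the psd complement $P\cZ_{00}P$, giving (II). For Part (I) with $d\ge 3$, the same bound forces $d\le 4$, so $\cZ$ has the $3\times 3$ block form (\ref{eq:nov10c6}). The plan is to exploit the zero blocks in the bottom-right corner together with the structured-transpose symmetries satisfied by the blocks of the middle matrix (Lemmas \ref{lem:jun3a7}--\ref{lem:aug14c8}) to pin down the parametric forms of $\cZ_{02}$, $\cZ_{11}$ and $\cZ_{01}$, and then to recover the psd condition on $E_1$ by a Schur-complement argument.

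I would begin with $\cZ_{02}$. The principal $2\times 2$ block submatrix of $\cZ$ obtained by deleting the middle block row and column is $\begin{bmatrix} \cZ_{00} & \cZ_{02} \\ \cZ_{20} & 0 \end{bmatrix}$; Cauchy-style interlacing bounds its negative index by $\mu_-(\cZ)=1$, while the elementary antidiagonal inequality $\mu_-\begin{bmatrix} A & B \\ B^T & 0 \end{bmatrix}\ge \rank(B)$ (proved by producing an indefinite $2$-plane for each nonzero vector in the range of $B$) forces $\rank(\cZ_{02})\le 1$. Writing $\cZ_{02}=us^T$ with $u\in\bbR^g$ a unit vector and $s\in\bbR^{g^2}$, the structured-transpose identities on $\cZ_{02}$ then force $s=u\otimes w_A$ with $w_A=\textrm{vec}(A)$ and $A=A^T$. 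Next I would analyze $\cZ_{11}$: evaluating the quadratic form of $\cZ$ on vectors orthogonal to the negative eigenspace and applying the same identities yields $\cZ_{11}=(u\otimes I_g)A(u\otimes I_g)^T$ with the same $A$. Finally $\cZ_{01}$ is decomposed using the symmetry $\cZ_{01}=\cZ_{01}^{sT}$ (a built-in property of the middle matrix of a symmetric polynomial) together with the remaining inertia constraints, producing $\cZ_{01}=u(u^T\otimes y^T)+uv^T+(uv^T)^{sT}$ with $(\textrm{mat}_g v)u=0$. Taking a Schur complement of $\cZ$ along the negative direction $u$ then collapses the leftover data into the matrix $E_1$, and the hypothesis $\mu_-(\cZ)=1$ becomes $E_1\succeq 0$.

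For the degree-$3$ case, the degree bounds on the individual polynomial blocks $Z_{ij}(x)$ (each of degree at most $d-2-(i+j)$) force $\cZ_{02}=0$ and $\cZ_{11}=0$, so $A=0$ and $w_A=0$; tracing through the decomposition of $\cZ_{01}$ then forces $v=0$ as well. The converse implications are verified by direct computation: given the explicit parametric forms, a Schur-complement calculation shows that $E_1\succeq 0$ produces exactly one negative eigenvalue in $\cZ$. I expect the main technical obstacle to be in the steps converting the rank-one bounds into the precise Kronecker-product parametrizations; the bookkeeping there rests entirely on the structured-transpose lemmas to be invoked, which encode how the cyclic symmetry of the Hessian representation $V^T\cZ V$ constrains each block of $\cZ$.
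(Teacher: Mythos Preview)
Your overall strategy matches the paper's: reduce to the $3\times 3$ block form via the degree bound, bound $\rank\cZ_{02}\le 1$ from inertia, pin down the parametric forms block by block, and recover the inertia condition as a Schur complement. Parts (II), (III), and the $\rank\cZ_{02}\le 1$ step are fine.

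The gap is in your treatment of $\cZ_{11}$. The identity $\cZ_{11}=(u\otimes I_g)A(u\otimes I_g)^T$, with the \emph{same} $A$ satisfying $A=A^T$ and $w_A=\mathrm{vec}(A)$, is not a consequence of inertia together with the structured-transpose lemmas you cite; Lemmas \ref{lem:jun3a7}--\ref{lem:aug14c8} are purely computational identities about the $sT$ operation and impose no constraint on $\cZ_{11}$ by themselves. What forces this form is the polynomial relation (\ref{eq:nov10b6}): both $\cZ_{02}$ and $\cZ_{11}$ encode the same homogeneous component $p_4$, via
\[
p_4(x)=\tfrac12\,[x_1\cdots x_g]\,\cZ_{02}\,([x_1\cdots x_g]_3)^T
      =\tfrac12\,[x_1\cdots x_g]_2\,\cZ_{11}\,([x_1\cdots x_g]_2)^T .
\]
The paper exploits this explicitly: Lemma \ref{lem:p4rep} (using the factoring Lemma \ref{lem:berk}, which you do not invoke) yields $\cZ_{02}=u(u^T\otimes w_1^T)$ and $p_4=\varphi f_0\varphi$, and then Lemma \ref{lem:UAU}(2) compares the two displays above to read off $\cZ_{11}=UAU^T$ with $A=\mathrm{mat}_g(w_1)$; the symmetry $A=A^T$ comes from $p_4=p_4^T$. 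Your proposed route --- ``evaluating the quadratic form of $\cZ$ on vectors orthogonal to the negative eigenspace'' --- yields at best $\cZ_{11}\succeq 0$, which is far too weak. This matters downstream: the decomposition of $\cZ_{01}$ in Lemma \ref{lem:UAU}(4) uses $E\succeq 0$ together with $\cZ_{11}=UU^T\cZ_{11}$ to get $P\cZ_{01}=P\cZ_{01}UU^T$, so you cannot reach the $\cZ_{01}$ form without first nailing down $\cZ_{11}$.

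A smaller point on degree $3$: since $\cZ_{02}=0$ there, no unit vector $u$ is handed to you from the degree-$4$ block, so ``tracing through the decomposition of $\cZ_{01}$'' presupposes something you have not yet produced. The paper instead (Lemmas \ref{lem:aug12b8}--\ref{lem:nov10b6}) argues $\rank\cZ_{01}\le 1$ from inertia, writes $\cZ_{01}=u_1w_1^T$, and applies Lemma \ref{lem:berk} to $p_3=p_3^T$ to force $w_1=u_1\otimes y_1$, which is exactly the stated form with $v=0$.
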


\begin{proof}
See subsection \ref{subsec:oct12a8}.
\end{proof}

Theorem \ref{thm:mainp} will follow from Theorem \ref{thm:mainZ} and
the results in section \ref{sec:Zneg1}.

\bigskip

The next theorem gives the explicit correspondence between
Theorems \ref{thm:mainp} and \ref{thm:mainZ}.

\begin{theorem}
\label{thm:mainZsp}
The entries of $\cZ$ in Theorem \ref{thm:mainZ}
correspond to terms of   $p$ in Theorem \ref{thm:mainp}
as follows
\begin{enumerate}
  \item
  $\varphi(x)= [x_1 \ \cdots \ x_g] u $
\item
  $q(x)= \frac{1}{4}(u\otimes y + 2v)^T ([x_1 \ \cdots \ x_g]_2)^T $
  \item
  $f_0(x)=  \frac{1}{2}x^T A x $
\end{enumerate}
\end{theorem}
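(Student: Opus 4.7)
The plan is to verify each of the three identifications by a direct computation: use the recovery formula (\ref{eq:nov10b6}) to read off the homogeneous components $p_3$ and $p_4$ from the blocks of $\cZ$ described in Theorem \ref{thm:mainZ}, and match them against the decomposition $p = p_0 + p_1 + p_2 + \varphi q + q^T\varphi + \varphi f_0\varphi$ supplied by Theorem \ref{thm:mainp}. Throughout I will use the standard convention that the involution ${}^T$ on a polynomial matrix is the combined matrix transpose plus entrywise reversal of monomials; in particular, the $k$-th entry of $([x]_n)^T$ is the reversal of the $k$-th entry of $[x]_n$. The structured-transpose identities of Lemmas \ref{lem:jun3a7}--\ref{lem:aug14c8} will handle the $(uv^T)^{sT}$ piece below.

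Item (1) is immediate from the definition: $\varphi(x) = \sum_j u_j x_j = [x_1\ \cdots\ x_g]\,u$. For item (3), assume $d\ge 4$ and apply (\ref{eq:nov10b6}) with $(i,j)=(0,2)$, so $p_4 = \tfrac12[x]_1\,\cZ_{02}\,[x]_3^T$. Substituting $\cZ_{02} = u(u^T\otimes w_A^T)$ extracts $[x]_1 u = \varphi$ on the left and reduces the claim to the identity
\[
(u^T\otimes w_A^T)[x]_3^T = (x^T A x)\,\varphi.
\]
To verify this, note that the entry of $u^T\otimes w_A^T$ at position $(p-1)g^2 + (t-1)g + s$ equals $u_p A_{st}$, while the corresponding entry of $[x]_3^T$ is the reversed monomial $x_s x_t x_p$; after the scalars $u_p, A_{st}$ are pulled out, the triple sum factors as $\bigl(\sum_{s,t} A_{st} x_s x_t\bigr)\bigl(\sum_p u_p x_p\bigr) = (x^T A x)\,\varphi$. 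Hence $p_4 = \tfrac12\,\varphi\,(x^T A x)\,\varphi = \varphi f_0 \varphi$ with $f_0 = \tfrac12 x^T A x$, which establishes (3).

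For item (2), apply (\ref{eq:nov10b6}) with $(i,j)=(0,1)$ and split $\cZ_{01} = u(u^T\otimes y^T) + uv^T + (uv^T)^{sT}$. Introduce $\eta(x) := y^T[x]_1^T = \sum_q y_q x_q$ and $\omega(x) := \sum_{p,q} v_{p,q}\,x_p x_q$, where $v_{p,q}$ denotes the $((p-1)g+q)$-th entry of $v$. The same Kronecker-and-involution bookkeeping used above gives
\[
\tfrac12[x]_1 u(u^T\otimes y^T)[x]_2^T = \tfrac12\,\varphi\eta\varphi,\qquad \tfrac12[x]_1 uv^T[x]_2^T = \tfrac12\,\varphi\,\omega^T,
\]
while working out the structured transpose entry by entry yields $\tfrac12[x]_1 (uv^T)^{sT}[x]_2^T = \tfrac12\,\omega\,\varphi$. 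Summing, $p_3 = \tfrac12\varphi\eta\varphi + \tfrac12\varphi\omega^T + \tfrac12\omega\varphi$. The proposed $q(x) = \tfrac14(u\otimes y + 2v)^T[x]_2^T$ simplifies, by the same identities, to $q = \tfrac14\eta\varphi + \tfrac12\omega^T$; then, using $(\eta\varphi)^T = \varphi\eta$, one computes $\varphi q + q^T\varphi = \tfrac12\varphi\eta\varphi + \tfrac12\varphi\omega^T + \tfrac12\omega\varphi = p_3$, confirming (2). The chief technical obstacle throughout is the combined bookkeeping for the polynomial involution on $([x]_n)^T$ and for the structured transpose $(\cdot)^{sT}$; once these conventions are pinned down, each identification reduces to straightforward Kronecker expansion.
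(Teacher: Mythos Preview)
Your proof is correct and follows essentially the same strategy as the paper: recover $p_3$ and $p_4$ from the blocks of $\cZ$ via formula (\ref{eq:nov10b6}) and match against the decomposition in Theorem \ref{thm:mainp}. The only minor differences are bookkeeping choices---for $p_4$ you work with $\cZ_{02}$ where the paper (in \S\ref{subsec:oct12b8}) uses $\cZ_{11}$, and for $p_3$ you expand the three-term form $u(u^T\otimes y^T)+uv^T+(uv^T)^{sT}$ of $\cZ_{01}$ directly, while the paper instead invokes the equivalent $B$-form of Lemma \ref{lem:UAU}(4) to read off $q=\tfrac12(\mathrm{vec}\,B)^T([x]_2)^T$ in one step.
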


\begin{proof}
See subsection \ref{subsec:oct12c8}.
\end{proof}

\begin{rem}
If $p_4(x)=\frac{1}{2}\varphi(x)[x_1\ \cdots \ x_g]A[x_1\ \cdots \ x_g]^T
\varphi(x)$, then, since
$$
p_4(x)=\frac{1}{2}[x_1 \ \cdots \ x_g]_2\cZ_{11}([x_1 \ \cdots \ x_g]_2)^T=
\frac{1}{2}[x_1 \ \cdots \ x_g]\cZ_{02}([x_1 \ \cdots \ x_g]_3)^T,
$$
the formulas in Theorem \ref{thm:mainZ} imply that
$$
\cZ_{11}=UAU^T\quad\text{and}\quad \cZ_{02}=u(u^T\otimes w_A^T)\ \textup{with}
\ w_a=\textup{vec} A.
$$
Thus,
$\textup{rank}\cZ_{11}$ can vary between $1$ and $g$, when
$\textup{rank}\cZ_{02}=1$.
\end{rem}

It is not difficult to see that if  $q(x)[h]$ is a
symmetric nc polynomial in the variables $x$ and $h$ and if $q$ is homogeneous
of degree two in $h$, then it has a representation like that
in equation (\ref{eq:defs-middle}). In particular, the modified
Hessian can be expressed as
\begin{equation}
 \label{eq:modhess-middle}
  p^{\prime\prime}_{\lambda}(x)[h]
    = \widetilde{V}(x)[h]^T Z_{\lambda}(x) \widetilde{V}(x)[h].
\end{equation}
  Note here that the vector $\widetilde{V}(x)[h]$ is now of degree $d-1$ in
  $x$ and homogeneous of degree one in $h$.

The proof of Theorem \ref{thm:localq} relies on the following
structure theorem for $\cZ_\lambda$, the {\it scalar middle  matrix of
the modified
Hessian of $p$.}

\begin{theorem}
\label{thm:absModHess}
If $p(x)$ is a symmetric polynomial of degree $d$ in noncommutative symmetric
variables $x_1, \ldots, x_g$, then
the middle matrix $Z_\lambda(x)$ for the  modified Hessian for
$p(x)$ is polynomially congruent to the constant matrix
\begin{equation*}
{\cZ}_\lambda=\begin{bmatrix} {\cZ}&0\\0&
\lambda\psi_{d-1}(0)\psi_{d-1}(0)^T
\end{bmatrix}
\end{equation*}
in which $\psi_{d-1}(0)$ is a nonzero vector of size $g^d$. Moreover, the
congruence is independent of $\lambda$
and consequently, if $\lambda>0$, then
\begin{enumerate}
\item[\rm(1)] $\mu_+({\cZ}_{\lambda})=\mu_+({\cZ})+1$ and
$\mu_-({\cZ}_{\lambda})=\mu_-({\cZ})$;
\vspace{2mm}
\item[\rm(2)] ${\cZ}_{\lambda}\succeq 0\Longleftrightarrow {\cZ}\succeq 0$;
\vspace{2mm}
\item[\rm(3)] ${\cZ}_{\lambda}\succeq 0\Longrightarrow d\le 2$; and
\vspace{2mm}
\item[\rm(4)] $d \le 2\smin_-(p_\lambda^{\prime\prime}(x)[h]) + 2. $
\end{enumerate}
\end{theorem}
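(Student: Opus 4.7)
The plan is to set up a middle-matrix representation of $p''_\lambda$ over the extended border vector $\widetilde V(x)[h]$ and then exhibit a polynomial congruence, independent of $\lambda$, that puts it in the claimed block-diagonal form $\cZ_\lambda$. The four numbered consequences then follow by inspection of $\cZ_\lambda$ together with items (1)--(2) of subsection \ref{sec:middle}, which, being proved in \cite{DHMjda} for arbitrary symmetric nc polynomials homogeneous of degree two in $h$, apply to $p''_\lambda$ exactly as to $p''$.

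For Step 1, I would represent $p'(x)[h]$ as a product against $\widetilde V(x)[h]$. Writing $p'(x)[h]=\sum c_{a,j,b}\,a(x)\,h_j\,b(x)$ with the sum over monomials $a,b$ in $x$ satisfying $|a|+|b|\le d-1$, associativity lets one collect by the right factor $h_j b(x)$ (an entry of $\widetilde V$) to obtain
\[
p'(x)[h]=\psi(x)^T\widetilde V(x)[h],\qquad \psi(x)=\textup{col}(\psi_0(x),\ldots,\psi_{d-1}(x)),
\]
where $\psi_k(x)$ has height $g^{k+1}$ and entries that are polynomials in $x$ of degree at most $d-1-k$. In particular $\psi_{d-1}(x)=\psi_{d-1}(0)$ is a constant vector of size $g^d$, nonzero because $p_d\ne 0$. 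Squaring yields $p'(x)[h]^T p'(x)[h]=\widetilde V^T\psi\psi^T\widetilde V$, and padding (\ref{eq:defs-middle}) with a zero block in the new $V_{d-1}$ slot identifies
\[
Z_\lambda(x)=\begin{bmatrix} Z(x) & 0\\ 0 & 0\end{bmatrix}+\lambda\,\psi(x)\psi(x)^T
\]
as a middle matrix for $p''_\lambda$ over $\widetilde V$.

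For Step 2, let $B(x)$ be the polynomial matrix of item (1) of subsection \ref{sec:middle} realizing $Z(x)=B(x)^T\cZ B(x)$. Split $\psi(x)=\textup{col}(\psi_*(x),\psi_{d-1}(0))$ with $\psi_*=\textup{col}(\psi_0,\ldots,\psi_{d-2})$, choose any constant vector $e\in\bbR^{g^d}$ with $e^T\psi_{d-1}(0)=1$, and put
\[
T(x)=\begin{bmatrix} B(x) & 0 \\ e\,\psi_*(x)^T & I\end{bmatrix}.
\]
Then $T(x)$ and its inverse $\begin{bmatrix} B(x)^{-1} & 0 \\ -e\,\psi_*(x)^T B(x)^{-1} & I\end{bmatrix}$ are polynomial matrices. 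A direct block computation, using $e^T\psi_{d-1}(0)=1$ wherever the bottom block of $\cZ_\lambda$ is sandwiched between $\psi_*$ and $e$, verifies
\[
T(x)^T\cZ_\lambda\,T(x)=Z_\lambda(x),\qquad \cZ_\lambda=\begin{bmatrix}\cZ & 0\\ 0 & \lambda\,\psi_{d-1}(0)\psi_{d-1}(0)^T\end{bmatrix},
\]
with $T(x)$ manifestly $\lambda$-independent.

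The consequences are then bookkeeping. For $\lambda>0$ the rank-one block $\lambda\psi_{d-1}(0)\psi_{d-1}(0)^T$ contributes exactly one positive and no negative eigenvalues to the block-diagonal $\cZ_\lambda$, so item (2) of subsection \ref{sec:middle} applied to $p''_\lambda$ gives (1); (2) of the theorem is immediate. For (3), $\cZ_\lambda\succeq 0$ forces $\cZ\succeq 0$, hence $\smin_-(p'')=\mu_-(\cZ)=0$, and Theorem \ref{thm:abs} yields $d\le 2$. For (4), $\smin_-(p''_\lambda)=\mu_-(\cZ_\lambda)=\mu_-(\cZ)=\smin_-(p'')$, so Theorem \ref{thm:abs} applied to $p$ supplies the stated bound. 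The main obstacle will be Step 1: confirming that $\psi$ really has the graded structure claimed and that $\psi_{d-1}$ is constant of size $g^d$ and nonzero, because this nonvanishing is precisely what allows the pivot $e$ to exist and the off-diagonal rank-one block of $\lambda\psi\psi^T$ to be annihilated by a \emph{polynomial} congruence with \emph{polynomial} inverse.
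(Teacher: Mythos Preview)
Your proposal is correct and follows essentially the same approach as the paper. The paper (Lemmas~\ref{lem:nov6a6}--\ref{lem:3} and Theorem~\ref{thm:nov7a6}) likewise writes $p'(x)[h]=\psi(x)^T\widetilde V(x)[h]$ with $\psi_{d-1}$ constant and nonzero, and then produces the same lower-triangular polynomial congruence $S(x)=\begin{bmatrix}B(x)&0\\ C(x)&I\end{bmatrix}$; the only cosmetic difference is that the paper builds $C(x)=(yy^T)^\dagger y\,\psi_*(x)^T$ via the Moore--Penrose inverse (which amounts to the particular choice $e=y/(y^Ty)$), whereas you allow any $e$ with $e^T\psi_{d-1}(0)=1$, making your version slightly more elementary but not genuinely different.
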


\begin{proof}
This theorem is established in Section \ref{sec:iMHintro}.
\end{proof}

\subsection{Motivation} \label{sec:motivation}

The theorems in this paper on the relaxed Hessian
 bear on
a   conjecture concerning the convexity of the positivity set
$$
\cD_p:= \cup_{n \geq 0} \cD_p^n\,,
$$
of a symmetric nc polynomial $p$, where
 $\cD_p^n$ is the connected component of

$$
\mathcal P_p^n:= \{ X= \{X_1, \ldots, X_g \} :
X_j \in \RR^{\,n \times n}_{sym}\ \textup{and} \ p(X)\succ 0\}
$$
containing $0$.
An example of a ``convex'' positivity domain $\cD_p$,
is a ball, namely a set of the form $\cD_q$
where $q$ has the form
$$
q(x)= c - \sum_j^w (a_j + L_j(x) )^T (a_j + L_j(x) )
$$
where $a_j, c$ are real numbers and $L_j(x)$
are linear in $x_1, \dots, x_g$, that is,
$L(x)= b_1 x_1 + \cdots + b_g x_g$.

A (loosely stated)conjecture of Helton and McCullough is
\begin{conj}
\label{conj:convS}
(Helton and McCullough)  If $p$
is ``irreducible'' and $\cD_p$ is convex,
then $\cD_p$ is a ball.
\end{conj}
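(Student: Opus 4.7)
The plan is to apply Theorem \ref{thm:localq} to $-p$, exploiting that $-\langle p''_{\lambda,\delta}(x)[h]v,v\rangle$ is the noncommutative second fundamental form described in the introduction: geometric convexity of $\cD_p$ should manifest as positivity of the relaxed Hessian of $-p$ at boundary points, the theorem then forces $\deg p \le 2$ with $p$ matrix concave, after which the ball form of $\cD_p$ is extracted by completing the square.

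The first step is to pick a smooth boundary pair $(X_0,v)\in\partial\cD_p^n\times\mathbb R^n$ with $p(X_0)v=0$ and $\dim\ker p(X_0)=1$, and to derive from convexity of $\cD_p$ the tangential inequality
\[ \langle p''(X_0)[H]v,v\rangle \le 2\langle P_0^{-1}p'(X_0)[H]v,\,p'(X_0)[H]v\rangle \]
for all scalar-tangent $H$, where $P_0$ denotes the positive-definite restriction of $p(X_0)$ to $v^\perp$. This comes from a second-order perturbative analysis: when $\langle p'(X_0)[H]v,v\rangle=0$, the requirement that $X_0+tH$ leave $\cD_p^n$ for small $t\ne 0$ forces some perturbed vector $w(t)=v+t\xi+O(t^2)$ to satisfy $\langle p(X_0+tH)w(t),w(t)\rangle<0$, and minimizing over $\xi$ produces the displayed bound. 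An S-procedure argument then enlarges the constant and extends the inequality to $\langle p''(X_0)[H]v,v\rangle\le C\|p'(X_0)[H]v\|^2$ for every $H\in\gtupn$.

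Rearranging is exactly positivity of the modified, hence relaxed, Hessian of $-p$ at $(X_0,v)$. Next I would arrange by choosing $n$ large and using ``irreducibility'' of $p$ in a Zariski-genericity sense that $(X_0,v)$ satisfies both the dimension bound $n>\tfrac12 g\widetilde{\nu}(\widetilde{\nu}-1)$ and the nondegeneracy hypothesis of Theorem \ref{thm:localq}, namely that no nonzero polynomial of degree less than $d$ annihilates $v$ at $X_0$. Theorem \ref{thm:localq} applied to $-p$ then gives $\deg p\le 2$ with $p$ matrix concave.

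Writing $p(x)=c+L(x)+Q(x)$ with $L$ linear and $Q$ homogeneous of degree two, matrix concavity forces $-Q$ to be matrix positive, and the noncommutative sum-of-squares theorem for quadratics yields $-Q(x)=\sum_j\ell_j(x)^2$ for linear forms $\ell_j$. Completing the square absorbs $L$ into $\sum_j(a_j+\ell_j(x))^2$, with solvability of the resulting linear system for the $a_j$'s being guaranteed by irreducibility (otherwise $p$ would decompose nontrivially), producing the ball form $p(x)=c'-\sum_j(a_j+\ell_j(x))^2$, and so $\cD_p$ is a ball. The principal obstacle is making noncommutative ``irreducibility'' precise enough to deliver both the nondegenerate boundary pair at arbitrarily large matrix size and the solvability of the completing-the-square system; this is a genuine noncommutative-algebraic-geometry density claim without a completely routine analogue. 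The passage from convexity to the tangential inequality, its extension to all $H$ via S-procedure, and the invocation of Theorem \ref{thm:localq} are essentially mechanical once irreducibility is in hand.
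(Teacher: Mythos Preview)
The statement you are attempting to prove is labeled a \emph{conjecture} in the paper, not a theorem; the paper offers no proof of it. The authors explicitly call it ``loosely stated'' (note the scare quotes around ``irreducible''), and the surrounding discussion in \S\ref{sec:motivation} presents the results of the paper as \emph{progress toward} the conjecture, not a resolution of it. So there is no proof in the paper to compare your proposal against.

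What you have written is a plausible program, and indeed its broad shape---pass from geometric convexity of $\cD_p$ to positivity of the relaxed Hessian at boundary data, then invoke Theorem~\ref{thm:localq}---is exactly the strategy the paper is building infrastructure for (see the remark that $-\langle p''_{\lambda,\delta}(X)[H]v,v\rangle$ ``plays the role of a noncommutative second fundamental form'' and the reference to \cite{DHMind}). But the gaps you yourself flag are genuine and, as of this paper, unresolved: (i) a precise notion of ``irreducible'' that simultaneously guarantees smooth boundary points $(X_0,v)$ at arbitrarily large matrix size \emph{and} the nondegeneracy hypothesis of Theorem~\ref{thm:localq} (no nonzero $q$ of degree $<d$ with $q(X_0)v=0$) is not supplied here; (ii) your S-procedure step extending the tangential inequality from $H$ with $\langle p'(X_0)[H]v,v\rangle=0$ to all $H$ is not automatic in this matrix setting; and (iii) the completing-the-square argument at the end requires the linear part $L$ to lie in the span of the $\ell_j$, which needs its own justification. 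These are the obstacles the paper is pointing at, not ones it has overcome.
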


\subsubsection{Noncommutative real algebraic geometry}
One might think of Conjecture \ref{conj:convS}
and its supporting results  as the beginnings of a
noncommutative real algebraic geometry
in that this is like supposing
``if the boundary  $\partial \cD_p$ of $\cD_p$ has positive curvature''
and concluding
that ``it has constant curvature.''

Since
$$
\gtupn=\RR^g\quad\textrm{when}\ n=1,
$$
it is reasonable to turn to  functions $f$ on $\RR^g$ (now considered
with commuting
variables) for guidance.
In particular smooth quasiconvex functions $f$ on
$\RR^g$ are functions  whose sublevel sets
$$\cC_c :=  \{ X: \ f(X) \leq c \}$$
are convex. It can be shown that a set $\cC_c$ with boundary
$\partial\cC_c$ and  gradient $(\nabla f)(X) \neq 0$ for $X
\in \partial\cC_c$ is convex if and only if {\it the Hessian of $f$
restricted to the tangent plane of $f$ at $X$ is positive semidefinite.}
This is the same as the {\it second fundamental form} of
differential geometry
being nonnegative.
In the terminology of this paper for $n=1$ this condition for $X\in\RR^g$ is
that $p''(X)[H] \succeq 0$ for all $H\in\RR^g$ satisfying  $p'(X)[H] =0$.
In the scalar case ($n=1$) $v$ is effectively 1, so we do not write it.
An easy Lagrange multiplier argument shows
that this is equivalent to {\it there is a $\lambda >0$ making the
modified Hessian, $p^{\prime\prime}_\lambda(X)[H]$,
nonnegative for all $H$.}
Thus a smooth function $f$  with nowhere vanishing gradient (except
at its global minimum) is
quasiconvex on a bounded domain $B\subset \mathbb R^g$
if and only if for $\lambda$
large enough $p''_\lambda(X)(X)[H]$ is nonnegative
for all $X \in B\subset \RR^g$ and all $H \in  \RR^g$.

The papers \cite{DHMind} and subsequent work
represent progress on Conjecture \ref{conj:convS} for the
{\bf noncommutative case}.
We find that an appropriately restricted  quadratic form based on
the relaxed Hessian plays the role of the second fundamental form very
effectively and seems to be the key to
understanding ``noncommutative curvature.''
Its power emanates to a large extent from the classification
theorems established in this paper.

In a different direction we mention that
many engineering problems connected with linear systems lead to
constraints in which a
polynomial or a matrix
of polynomials with matrix variables is
required to take a positive semidefinite value.
Many of these problems are also
``dimension free" in the sense that the
form of the polynomial remains the same for matrices of all sizes.
This is one very good reason to study noncommutative polynomials.
If these polynomials are ``convex" or
``quasiconvex" in the unknown matrix variables, then
numerical calculations are much more reliable.
 This motivated Conjecture \ref{conj:convS}  and our results
 suggest that  matrix convexity
  in conjunction with a type of ``system scalability''
 produces incredibly heavy constraints.

\subsubsection{Noncommutative analysis} This article could also come
under the general heading of ``free analysis",
since the setting is a noncommutative algebra whose generators
are ``free" of relations. This is a
burdgeoning area, of which free probability,
invented by Voiculescu \cite{Vo05} and \cite{Vo06} is currently the
largest component. The interested reader is referred to the web
site \cite{AIMfree06}
of the American Institute of Mathematics, in particular it gives the
findings of the AIM workshop in 2006 on free analysis.
Excellent work in other directions with free algebras has been carried out
by Ball et al,
\cite{Ball06}, Kaluzny-Verbotzky-Vinnikov, e.g.  \cite{VVV06},
Popescu, e.g. \cite{Po06}, and by Voiculescu \cite{Vo05} and \cite{Vo06}.
A fairly expository article describing noncommutative convexity,
noncommutative semialgebraic geometry and relations to engineering is
\cite{HPpreprint}.
\bigskip

\noindent
{\bf Acknowledgement} The authors extend their thanks to the referee for
his/her careful reading of the paper, and for catching a number of misprints
and
making useful suggestions for improving
the clarity of the presentation.

{\section{Kronecker Product Notation}\footnote{This section is taken
from \cite{DHMjda} for the convenience of the reader.}}
 \label{sec:kronDef}
Recall the {\bf Kronecker product}
\begin{equation}
 \label{eq:kronecker}
A\otimes B = \left[\begin{array}{ccc}
a_{11} B&\cdots&a_{1q}B\\ \vdots&&\vdots\\
a_{p1}B&\cdots&a_{pq}B\end{array}\right]
\end{equation}
of a pair of matrices $A\in \mathbb{R}^{p\times q}$
and $B\in \mathbb{R}^{s\times t}.$

A number of formulas that occur in this paper are
conveniently  expressed  in
terms of $j$-fold iterated
Kronecker products of row or column vectors of noncommuting variables.
Accordingly we introduce the notations,
\begin{equation}
\label{eq:may6a7}
[x_1 \ \cdots \ x_g]_j=[x_1 \ \cdots \ x_g]\otimes \cdots \otimes
[x_1 \ \cdots \ x_g]\quad\textrm{($j$ times)}
\end{equation}
and
\begin{equation}
\label{eq:may6b7}
\begin{bmatrix}x_1 \\ \vdots \\ x_g\end{bmatrix}_j=\begin{bmatrix}x_1 \\
\vdots \\ x_g\end{bmatrix}\otimes \cdots \otimes
\begin{bmatrix}x_1 \\ \vdots \\
x_g\end{bmatrix}\quad\textrm{($j$ times)}\,.
\end{equation}

\subsection{Border vectors}
In this section we shall expand upon the representation of the Hessian
$p^{\prime\prime}(x)[h]$ in terms of a middle matrix $Z(x)$ and the
components
$V_j=V_j(x)[h]$ of the border vector $V(x)[h]$
defined by the formulas
\begin{equation}
\label{eq:vjt}
V_j^T(x)[h]=\left\{\begin{array}{ll} &[h_1 \ \cdots \ h_g]\quad\textrm{if}
\quad j=0\,,\\
&[x_1 \ \cdots \ x_g]_j\otimes [h_1 \ \cdots \ h_g]\quad
\textrm{if}\quad j=1,2\ldots\,,\end{array}\right.
\end{equation}

Thus, for example, the
components
$[V^T_0 \ V^T_1\  \cdots\  V^T_{d-2}]$ of the transpose $V^T$
of the border vector $V$ for polynomials of two noncommuting
variables may be written
\begin{eqnarray*}
V^T_0(x)[h]&=& [h_1\quad h_2],\\
V^T_1(x)[h]&=& [x_1\quad x_2 ]\otimes [h_1\quad h_2],\\
V^T_2(x)[h]&=&[x_1\quad x_2 ]\otimes [x_1\quad x_2 ]
\otimes  [h_1\quad h_2],\\
&& \vdots\\
\end{eqnarray*}
In this ordering,
$$
 V^T_1(x)[h] = [x_1h_1\quad x_1 h_2\quad x_2 h_1\quad x_2 h_2]
$$
 and
$$
V^T_2(x)[h] =[x_1x_1h_1\ x_1 x_1h_2\ x_1x_2 h_1\ x_1x_2 h_2\ x_2 x_1h_1\
x_2 x_1h_2\ x_2
x_2 h_1\ x_2 x_2 h_2].
$$

However, since we are dealing with noncommuting variables,
{\it the rules for extracting $V_j$ from $V^T_j$
are a little more complicated than might
be expected}. Thus, for example, the given formula for $V_1^T$ implies that
$$
V_1 = \left[\begin{array}{c} h_1x_1 \\ h_2x_1 \\ h_1x_2 \\ h_2x_2
\end{array}\right] \ne
\left[\begin{array}{c}h_1\\h_2\end{array}\right]\otimes
\left[\begin{array}{c}x_1\\x_2\end{array}\right]\,,
$$
as one might like. Nevertheless, the situation is not so bad,
because the
left hand side is just a permutation of the right hand side.
This propagates:

\begin{equation}
\label{eq:vja}
\left[\begin{array}{c}h_1\\\vdots\\h_g\end{array}\right]
=V_0\quad\textrm{and}
\quad
\left[\begin{array}{c}h_1\\\vdots\\h_g\end{array}\right]\otimes
\left[\begin{array}{c}x_1\\\vdots\\x_g\end{array}\right]_j=\Pi_jV_j\quad
\textrm{for}\quad j=1,2,\ldots
\end{equation}
for suitably defined permutation matrices $\Pi_j$, $j=1,2,\ldots\,$.
The permutation matrix $\Pi_j$ in formula (\ref{eq:vja}) can also be
characterized by the condition
\begin{equation}
\label{eq:nov6d6}
\left[\begin{array}{c}x_1\\\vdots\\x_g\end{array}\right]_{j+1}=\Pi_j
(\left[\begin{array}{ccc}x_1&\cdots&x_g\end{array}\right]_{j+1})^T
\quad
\textrm{for}\quad j=1,2,\ldots
\end{equation}
 The second set of formulas in (\ref{eq:vja})
can also be written in the form
\begin{equation}
\label{eq:vj}
\Pi_jV_j(x)[h]=\left(\left[\begin{array}{c}h_1\\\vdots\\h_g\end{array}\right]
\otimes I_{g^j}\right)\left[\begin{array}{c}x_1\\\vdots\\x_g\end{array}
\right]_j\,,\quad j=1,2,\dots\,.
\end{equation}
The general formula
\begin{equation}
\label{eq:oct1a}
\left([u_1,\ldots,u_k]\otimes [v_1,\ldots,v_{\ell}]\right)a=
\left([u_1,\ldots,u_k]\right)A\left([v_1,\ldots,v_{\ell}]^T\right)
\end{equation}
with
\begin{equation}
\label{eq:oct1b}
a^T=[a_1, a_2,\ldots,a_{k\ell}]\quad\textrm{and}\quad A=\begin{bmatrix}
a_1&\cdots &a_{\ell}\\a_{\ell+1}&\cdots &a_{2\ell}\\\vdots & &\vdots\\
a_{(k-1)\ell+1}&\cdots &a_{k\ell}\end{bmatrix}
\end{equation}
is useful, as is the identity
\begin{equation}
\label{eq:nov6a}
\begin{bmatrix}u_1\\ \vdots\\ u_k\end{bmatrix}\otimes
\begin{bmatrix}v_1\\ \vdots\\ v_{\ell}\end{bmatrix}=
\left(\begin{bmatrix}u_1\\ \vdots\\ u_k\end{bmatrix}\otimes I_{\ell}\right)
\begin{bmatrix}v_1\\ \vdots\\ v_{\ell}\end{bmatrix}\,.
\end{equation}
\subsection{Tracking derivatives}
The Kronecker notation is useful for keeping track of the positions
of the variables $\{h_1,\ldots,h_g\}$ when computing the
derivatives $p^{\prime}(x)[h]$ and
$p^{\prime\prime}(x)[h]$ of the polynomial $p(x)$ in the direction $h$.
This is a consequence of two facts:
\begin{enumerate}
\item[\rm(1)]  Every polynomial is a sum of
homogeneous polynomials.
\item[\rm(2)]  Every homogeneous polynomial
$p=p(x)=p(x_1, \ldots , x_g)$ of degree
$d$ in $g$ noncommuting variables can be expressed 
as a $d$-fold Kronecker
product times a vector $a$  with  $g^d$ real entries:
$$
\textrm{$p$ homogeneous of degree $d$} \Longrightarrow
p(x)=([x_1\ \cdots \ x_g]_d)a\,.
$$
\end{enumerate}
Thus, for example,
if
$p(x)=([x_1,\ldots,x_g]_3)a$ for some vector $a\in\mathbb{R}^{g^3}$, then
\begin{eqnarray*}
p^{\prime}(x)[h]&=&([h_1,\ldots,h_g]\otimes [x_1,\ldots,x_g]_2\\
& & +[x_1,\ldots,x_g]\otimes[h_1,\ldots,h_g]\otimes [x_1,\ldots,x_g] \\
& &+[x_1,\ldots,x_g]_2\otimes [h_1,\ldots,h_g])a\,.
\end{eqnarray*}
Higher order directional derivatives can be tracked in a similar fashion.

In calculations of this type the identity
\begin{equation}
\label{eq:Q3c}
([x_1, \dots, x_g]_{k}\,u)([y_1, \dots, y_{\ell}]v)=
([x_1, \dots, x_g]_k\otimes [y_1, \dots, y_{\ell}])\,(u\otimes v)\,,
\end{equation}
for noncommuting (as well as commuting )variables
$x_1, \dots, x_g$, $y_1, \dots, y_{\ell}$ and
vectors $u\in\mathbb{R}^{g^k}$ and $v\in\mathbb{R}^{\ell}$ is often useful.

\section {Basic Identities}
\label{sec:ident}

This section provides some basic identities needed for later sections.
Some are easy, some take a bit of work. Most of the details
are left to the
reader.

\subsection{Identities}

\begin{theorem}
\label{thm:ids}
Let $u\in\mathbb{R}^g$, $w\in\RR^{g^2}$, $A\in\RR^{g\times g}$,
$\varphi(x)=[x_1 \ \cdots \ x_g]u$
and let $\Pi_j$
denote the permutations defined in (\ref{eq:nov6d6}). Then the
following identities are in force:
\begin{enumerate}
\item $[x_1 \ \cdots \ x_g]_2 (u\otimes A)=
 \varphi (x)[x_1 \ \cdots \ x_g]A$.
\vspace{2mm}
\item $(u\otimes A)^T ([x_1 \ \cdots \ x_g]_2)^T
= A^T\begin{bmatrix} x_1 \\ \vdots \\ x_g \end{bmatrix} \varphi (x)$.
\vspace{2mm}
\item $[x_1 \ \cdots \ x_g](u\otimes A)^T = u^T \otimes (
[x_1 \ \cdots \ x_g]A^T)$.
\vspace{2mm}
\vspace{2mm}
\item $([x_1 \ \cdots \ x_g]_2)^T
= (I_g \otimes [x_1 \ \cdots \ x_g]^T) [x_1 \ \cdots \ x_g]^T$.
\vspace{2mm}
\item $\biggl([x_1 \ \cdots \ x_k] \begin{bmatrix} a_1 \\ \vdots \\
a_k \end{bmatrix} \biggl) \biggl([y_1 \ \cdots \ y_l]
\begin{bmatrix} b_1 \\ \vdots \\ b_l \end{bmatrix} \biggl)\\
= (b^T \otimes a^T) ([y_1 \ \cdots \ y_l]
\otimes [x_1 \ \cdots \ x_g])^T$.
\vspace{2mm}
\item
$\varphi^2 = (u^T \otimes u^T) ([x_1 \ \cdots \ x_g]_2)^T$, \\
$\varphi^3 = (u^T \otimes u^T \otimes u^T)
([x_1 \ \cdots \ x_g]_3)^T$, etc.
\vspace{2mm}
 \item $[x_1 \ \cdots \ x_g] uw^T ([x_1 \ \cdots \ x_g]_2)^T
 = (w^T \otimes u^T)([x_1 \ \cdots \ x_g]_3)^T$.
\vspace{2mm}
 \item $[x_1 \ \cdots \ x_g]_2 w u^T = u^T
 \otimes ((\Pi_1^T w)^T([x_1 \ \cdots \ x_g]_2)^T)$.
\vspace{2mm}
 \item $[x_1 \ \cdots \ x_g]_2 w u^T ([x_1 \ \cdots \ x_g])^T
  = (u^T \otimes (\Pi_1^T w)^T) ([x_1 \ \cdots \ x_g]_3)^T$.
\vspace{2mm}
 \item $\biggl( \begin{bmatrix} x_1 \\ \vdots \\ x_g \end{bmatrix}
 \otimes I_{g^k} \biggl) \begin{bmatrix} x_1 \\
  \vdots \\ x_g \end{bmatrix}_k = \begin{bmatrix} x_1 \\ \vdots
  \\ x_g \end{bmatrix}_{k+1}$ for $k=1,2,\ldots $.
\vspace{2mm}
\item If
 $w = \textup{vec}\, A$, then
\begin{eqnarray*}
[x_1 \ \cdots \ x_g]A &= &w^T
 \biggl(I_g \otimes \begin{bmatrix} x_1 \\ \vdots \\ x_g \end{bmatrix}
 \biggl)\quad\text{and}\\  ([x_1 \ \cdots \ x_g]_2)w&=& [x_1 \ \cdots \ x_g]
A^T([x_1 \ \cdots \ x_g])^T\,.
\end{eqnarray*}
\vspace{2mm}
 \item $A(u\otimes I_g)^T = u^T \otimes A$.
\vspace{2mm}
 \item $\biggl(I_{g^2} \otimes \begin{bmatrix} x_1 \\ \vdots \\ x_g
  \end{bmatrix}
 \biggl) ([x_1 \ \cdots \ x_g]_2)^T = ([x_1 \ \cdots \ x_g]_3)^T$.
\vspace{2mm}
\item If $v$ a vector of size $g^k$ and
$w$ a vector with  $g^l$ entries, then
$$
(v^T \otimes w^T) \begin{bmatrix} x_1 \\ \vdots \\ x_g
\end{bmatrix}_{k+l}
= \biggl(v^T \begin{bmatrix} x_1 \\ \vdots \\ x_g \end{bmatrix}_k \biggl)
\biggl(w^T \begin{bmatrix} x_1 \\ \vdots \\ x_g \end{bmatrix}_l \biggl)\,.
$$
\vspace{2mm}
\item If $a, \ c \in \mathbb{R}^k$ and  $ b, \ d \in \mathbb{R}^l$, then
$(a^T \otimes b^T)(c \otimes d) = (a^Tc) (b^Td)$.
\end{enumerate}
\end{theorem}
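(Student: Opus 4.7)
The plan is to treat Theorem 3.1 as a catalogue of identities flowing from three working tools: the mixed-product rule $(A\otimes B)(C\otimes D)=AC\otimes BD$ together with bilinearity of the Kronecker product; the recursion $[x_1,\ldots,x_g]_{j+1}=[x_1,\ldots,x_g]_j\otimes[x_1,\ldots,x_g]$ with its column-vector analogue; and the convention, forced by the comparison of $V_1^T$ and $V_1$ earlier in the paper, that transposing a row of noncommutative polynomials both matrix-transposes and applies the monomial involution $m\mapsto m^T$ entrywise. Once these tools are fixed and verified on a small example (say $g=2$), each identity on the list is handled by routine manipulation rather than any deep argument.

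First I would dispatch the purely Kronecker-algebraic items (5), (10), (13), (14), and (15); each collapses to a single application of the mixed-product rule after bilinear expansion. Identity (4) comes out of multiplying $(I_g\otimes[x_1,\ldots,x_g]^T)$ by $[x_1,\ldots,x_g]^T$ block-wise and recognizing the result as the involuted transpose of $[x_1,\ldots,x_g]_2$. Identity (11) is the standard $\mathrm{vec}$ identity $\mathrm{vec}(ABC)=(C^T\otimes A)\,\mathrm{vec}(B)$ specialized to row/column vectors of variables on either side of $A$.

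Items (1)--(3), (6), (7), and (12) form a family organized around the factor $u\otimes A$. For instance, (1) is immediate from $[x_1,\ldots,x_g]_2(u\otimes A) = ([x_1,\ldots,x_g]\otimes[x_1,\ldots,x_g])(u\otimes A) = ([x_1,\ldots,x_g]u)\otimes([x_1,\ldots,x_g]A) = \varphi(x)\,[x_1,\ldots,x_g]A$, where the last equality uses that the Kronecker product of a scalar with a row is their ordinary product. Items (2), (3), and (12) are parallel rearrangements; (6) follows by iterated application with $A=I_g$ (repeated Kronecker specialization of $\varphi=[x_1,\ldots,x_g]u$ against itself), and (7) then comes out of combining (6) with (15).

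The step I expect to require the most care is (8) and (9), the only ones that involve the permutation $\Pi_1$. The subtlety is that while the row $[x_1,\ldots,x_g]_2$ and the column $\begin{bmatrix}x_1\\\vdots\\x_g\end{bmatrix}_2$ carry the same monomials in the same positions, the involuted transpose $([x_1,\ldots,x_g]_2)^T$ reverses each monomial, so these two columns differ by a permutation; the relation (\ref{eq:nov6d6}) asserts that this permutation is exactly $\Pi_1$. For (8) the argument is then to write the scalar $[x_1,\ldots,x_g]_2 w$ as $w^T\begin{bmatrix}x_1\\\vdots\\x_g\end{bmatrix}_2 = (\Pi_1^T w)^T([x_1,\ldots,x_g]_2)^T$, multiply by $u^T$, and use that $u^T$ tensored with a scalar is that scalar times $u^T$. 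Identity (9) follows from (8) by right-multiplying with $([x_1,\ldots,x_g])^T$ and applying (5) to regroup the tensor factors. The real work throughout is keeping the involutions and the Kronecker orderings straight; no deeper argument is needed.
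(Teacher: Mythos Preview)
Your proposal is correct and matches the paper's own treatment: the paper states these identities as a catalogue and explicitly leaves ``most of the details to the reader,'' so there is no proof to compare against beyond the implicit assertion that everything follows from routine Kronecker-product bookkeeping. Your organization into groups (the purely mixed-product items, the $u\otimes A$ family, and the $\Pi_1$-dependent pair (8)--(9)) and your explicit handling of the noncommutative transpose convention are exactly the kind of roadmap the paper omits, and the individual verifications you sketch all go through.
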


\begin{lemma}
\label{lem:Nicesplit}
If $u \in R^g$ and $w \in R^{g^2}$, then
\begin{eqnarray*}
[x_1 \ \cdots \ x_g]&uw^T& ([x_1 \ \cdots \ x_g]_2)^T
 + [x_1 \ \cdots \ x_g]_2 wu^T ([x_1 \ \cdots \ x_g])^T \\
&=& (w^T \otimes u^T + u^T \otimes (\Pi_1^T w)^T)
 ([x_1 \ \cdots \ x_g]_3)^T,
\end{eqnarray*}
where $\Pi_1$ is the permutation defined in (\ref{eq:nov6d6}).
\end{lemma}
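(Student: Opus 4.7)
The plan is almost embarrassingly short: the lemma is simply the sum of identities (7) and (9) from Theorem \ref{thm:ids}. Specifically, identity (7) identifies the first summand on the left-hand side with $(w^T \otimes u^T)([x_1 \ \cdots \ x_g]_3)^T$, and identity (9) identifies the second summand with $(u^T \otimes (\Pi_1^T w)^T)([x_1 \ \cdots \ x_g]_3)^T$. Adding these and factoring out $([x_1 \ \cdots \ x_g]_3)^T$ on the right yields exactly the claimed equality.

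So the proof I would write is a one-liner: apply Theorem \ref{thm:ids}(7) and Theorem \ref{thm:ids}(9), then combine. Since both resulting terms share the common trailing factor $([x_1 \ \cdots \ x_g]_3)^T$, the row vector prefactors $(w^T \otimes u^T)$ and $(u^T \otimes (\Pi_1^T w)^T)$ may be added directly, giving the stated right-hand side.

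The only conceptual point worth mentioning (and not really an obstacle) is why identities (7) and (9) are not themselves self-evident. Identity (7) is the straightforward case: the monomial $[x_1 \ \cdots \ x_g]$ is tensored on the left of $[x_1 \ \cdots \ x_g]_2$, so the two Kronecker factors line up cleanly into $[x_1 \ \cdots \ x_g]_3$, producing the coefficient $w^T \otimes u^T$. Identity (9) requires the permutation $\Pi_1$ because the order of the Kronecker factors is reversed on the left-hand side: the degree-two piece $[x_1 \ \cdots \ x_g]_2$ appears first, then the degree-one piece $[x_1 \ \cdots \ x_g]$. Reordering into the canonical $[x_1 \ \cdots \ x_g]_3$ form introduces the transposition on the coefficient vector $w$ (via $\Pi_1^T$), as defined in (\ref{eq:nov6d6}). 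This is purely a bookkeeping issue with no obstacle, since both identities have already been recorded in Theorem \ref{thm:ids} and the lemma is only asking us to add them.
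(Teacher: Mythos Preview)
Your proposal is correct and matches the paper's own proof exactly: the paper also states that the lemma is immediate from identities (7) and (9) in Theorem \ref{thm:ids}.
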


\begin{proof}
This is immediate from identities (7) and (9) in Theorem \ref{thm:ids}.
\end{proof}

\subsection{Structured transposes}
\begin{lemma}
\label{lem:jun3a7}
If $u\in\RR^g$, $U=u\otimes I_g$ and $B\in\RR^{g\times g^2}$, then
\begin{equation}
\label{eq:jun3a7}
\left(BUU^T\right)^{sT}=uu^TB^{sT}\quad\text{and}\quad
\left(uu^TB\right)^{sT}=B^{sT}UU^T.
\end{equation}
\end{lemma}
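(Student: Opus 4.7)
The plan is to verify each identity by unpacking the definitions of the structured transpose and the Kronecker product, working at the level of the block decomposition of $B$.

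First I would make the simple but key observation that $UU^T = (u\otimes I_g)(u^T\otimes I_g) = (uu^T)\otimes I_g$, so as a $g\times g$ block matrix with $g\times g$ blocks, its $(j,j')$-block is $u_j u_{j'} I_g$. This is the structural fact that makes everything work: right-multiplication by $UU^T$ couples the first (``block'') index of $B$ to $u$ while leaving the second (``within-block'') index untouched, which is precisely what is needed to commute cleanly with the structured transpose.

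Next I would write $B=[b_{ij}]$ with $b_{ij}\in\mathbb{R}^{1\times g}$ and compute both sides of the first identity block-by-block. On the left, the $(i,j')$-block of $BUU^T$ is
$$
\sum_{j} b_{ij}(u_j u_{j'} I_g) \;=\; u_{j'}\Bigl(\sum_j u_j\, b_{ij}\Bigr) \;=\; u_{j'}\, c_i,
$$
where $c_i:=\sum_j u_j b_{ij}$. Applying the structured transpose swaps the two block indices, so the $(i,j')$-block of $(BUU^T)^{sT}$ is $u_i\, c_{j'}$. On the right, since $(B^{sT})_{k,j'} = b_{j'k}$, the $(i,j')$-block of $uu^T B^{sT}$ is
$$
\sum_k u_i u_k\, b_{j'k} \;=\; u_i\Bigl(\sum_k u_k\, b_{j'k}\Bigr) \;=\; u_i\, c_{j'},
$$
matching the left side. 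The second identity is handled symmetrically: setting $\hat c_{j,k}:=\sum_\ell u_\ell (b_{\ell j})_k$, one checks that both $(uu^T B)^{sT}$ and $B^{sT}UU^T$ have $(i,j')$-block with $k$-th entry equal to $u_{j'}\hat c_{i,k}$.

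This is essentially a bookkeeping exercise, so there is no real obstacle; the only mildly delicate point is keeping the three indices (two block indices plus the within-block entry index) straight, and noticing that the $I_g$ factor in $UU^T$ is what allows the ``within-block'' index to pass through inertly while the $uu^T$ factor shifts from one block index to the other under $sT$.
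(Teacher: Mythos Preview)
Your proof is correct and follows essentially the same block-by-block computation as the paper for the first identity: both write $B=[b_{ij}]$ with $b_{ij}\in\mathbb{R}^{1\times g}$, use $UU^T=(uu^T)\otimes I_g$, and compare the $(i,j)$-blocks. The one minor difference is in the second identity: you verify it by a parallel direct computation, whereas the paper obtains it immediately from the first by substituting $B^{sT}$ for $B$ and invoking $(K^{sT})^{sT}=K$, which is slightly more economical.
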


\begin{proof}
Let $B=[b_{ij}]$, with $b_{ij}\in\RR^{1\times g}$ for $i,j=1,\ldots,g$.
Then the entries $c_{ij}$ of the matrix $C=BUU^T=B(uu^T\otimes I_g)$ are
\begin{eqnarray*}
c_{ij}&=&\sum_{s=1}^gb_{is}(u_su_jI_g)=\sum_{s=1}^gu_su_j b_{is}I_g\\
&=&\sum_{s=1}^gu_ju_sb_{is}
=\sum_{s=1}^g(uu^T)_{js}b_{is}\,,
\end{eqnarray*}
which coincides with the $ji$ entry of $uu^TB^{sT}$. Therefore,
$C^{sT}=uu^TB^{sT}$. This justifies the first formula in (\ref{eq:jun3a7}).
The second follows from the first upon replacing $B$ by $B^{sT}$, since
$\left( K^{sT}\right)^{sT}=K$ for every $K\in\RR^{g\times g^2}$.
\end{proof}

\begin{lemma}
\label{lem:aug12a8} If $u\in\RR^g$ and $A\in\RR^{g\times g}$, then
$$
\left(u^T\otimes A^T\right)^{sT}=u\left(\textup{vec} A\right)^T.
$$
\end{lemma}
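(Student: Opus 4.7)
The plan is to verify the identity by a direct block-entry comparison, in the spirit of the proof of Lemma \ref{lem:jun3a7}. The key is to unpack the Kronecker product $u^T \otimes A^T$ as a $g \times g$ block matrix with $1 \times g$ row-blocks (which is exactly the block decomposition that defines the structured transpose), and then match this against the corresponding decomposition of $u(\textup{vec}\, A)^T$.

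First I would write $A = [a_1 \ a_2 \ \cdots \ a_g]$ in terms of its columns $a_j \in \mathbb{R}^g$, so that the $i$-th row of $A^T$ is $a_i^T$. Expanding
$$
u^T \otimes A^T = [u_1 A^T \ u_2 A^T \ \cdots \ u_g A^T],
$$
the $(i,j)$ block (a $1 \times g$ row vector) of $u^T \otimes A^T$ is $u_j a_i^T$. By the definition of structured transpose, the $(i,j)$ block of $(u^T \otimes A^T)^{sT}$ is therefore $u_i a_j^T$.

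Next I would compute the right-hand side. Since $\textup{vec}\, A = \textup{col}(a_1, \ldots, a_g)$, we have $(\textup{vec}\, A)^T = [a_1^T \ a_2^T \ \cdots \ a_g^T]$, so
$$
u(\textup{vec}\, A)^T = \begin{bmatrix} u_1 a_1^T & u_1 a_2^T & \cdots & u_1 a_g^T \\ u_2 a_1^T & u_2 a_2^T & \cdots & u_2 a_g^T \\ \vdots & \vdots & & \vdots \\ u_g a_1^T & u_g a_2^T & \cdots & u_g a_g^T \end{bmatrix},
$$
whose $(i,j)$ block is $u_i a_j^T$. Comparing, the $(i,j)$ blocks agree for all $i,j$, which yields the identity.

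This proof is essentially bookkeeping; the only subtlety is to keep the row/column indices of the Kronecker product straight, since swapping the roles of $u$ and $A^T$ in $u^T \otimes A^T$ interchanges the meaning of the inner and outer indices relative to the block structure used in the structured transpose. Once this convention is carefully applied, there is no substantive obstacle, and the verification is immediate.
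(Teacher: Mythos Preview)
Your proof is correct and follows exactly the same approach as the paper: a direct block-entry comparison after writing $A=[a_1\ \cdots\ a_g]$ in columns, identifying the $(i,j)$ block of $u^T\otimes A^T$ as $u_j a_i^T$, and matching the structured transpose against $u(\textup{vec}\,A)^T$. The paper's version is slightly terser, but the argument is identical.
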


\begin{proof}
If $A=[ a_1\cdots a_g]$ with $a_j\in\RR^g$ for $j=1,\ldots,g$, then the $ij$
entry of $u^T\otimes A^T$ is $u_ja_i^T$. Therefore, the $ij$ entry of
$(u^T\otimes A^T)^{sT}$ is $u_ia_j^T$, which coincides with the $ij$ entry
of $u(\textup{vec} A)^T$.
\end{proof}

\begin{lemma}
\label{lem:aug14c8}
If $u$, $v$, $w$ belong to $\RR^g$ and $y\in\RR^{g^2}$, then:
\begin{enumerate}
\item[\rm(1)]  $u(v^T\otimes w^T)=u\otimes (v^T\otimes w^T)$ and
$$
\left( u\otimes (v^T\otimes w^T)\right)^{sT}=u^T\otimes(v\otimes w^T)=
(v\otimes w^T)U^T=v(u^T\otimes w^T).
$$
\item[\rm(2)] $(uy^T)^{sT}=\left(\textup{mat}_g\,y\right)^TU^T$.
\end{enumerate}
\end{lemma}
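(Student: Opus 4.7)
\smallskip

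My plan is to reduce both parts of the lemma to routine block-Kronecker bookkeeping, leaning on Lemma \ref{lem:aug12a8} (which, since $(\cdot)^{sT}$ is clearly an involution, can be read as $(uz^T)^{sT}=u^T\otimes(\textup{mat}_g z)^T$ whenever $z\in\RR^{g^2}$) and on the mixed-product formula $A^T(u^T\otimes I_g)=u^T\otimes A^T$ for $A\in\RR^{g\times g}$.

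I would handle part (2) first, since part (1) is then almost a corollary. Set $A=\textup{mat}_g y\in\RR^{g\times g}$, so that $y=\textup{vec}\,A$. Writing $A=[a_1\ \cdots\ a_g]$ with $a_j\in\RR^g$, one checks directly that $uy^T$, viewed as a $g\times g$ array of $1\times g$ blocks, has $(i,j)$-block $u_i a_j^T$, whence $(uy^T)^{sT}$ has $(i,j)$-block $u_j a_i^T$; on the other hand, $A^T(u^T\otimes I_g)=A^T U^T$ has $i$-th row $a_i^T(u^T\otimes I_g)=[u_1 a_i^T,\ldots, u_g a_i^T]$, so its $(i,j)$-block is $u_j a_i^T$. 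This gives $(uy^T)^{sT}=(\textup{mat}_g y)^T U^T$. Alternatively one may just invoke Lemma \ref{lem:aug12a8}: since $(\cdot)^{sT}$ is an involution, $(uy^T)^{sT}=(u(\textup{vec}\,A)^T)^{sT}=u^T\otimes A^T=A^T U^T=(\textup{mat}_g y)^T U^T$, where the penultimate equality is the elementary Kronecker identity above.

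For part (1), the opening identity $u(v^T\otimes w^T)=u\otimes(v^T\otimes w^T)$ is the standard fact that the Kronecker product of a column with a row coincides with their outer product. To compute its structured transpose, I would apply part (2) with $y=v\otimes w\in\RR^{g^2}$. Since $\textup{mat}_g(v\otimes w)=wv^T$, part (2) yields
$$
\bigl(u(v^T\otimes w^T)\bigr)^{sT}=(wv^T)^T U^T=vw^T(u^T\otimes I_g)=v(u^T\otimes w^T),
$$
using $w^T(u^T\otimes I_g)=u^T\otimes w^T$. The two middle expressions in the statement, $u^T\otimes(v\otimes w^T)$ and $(v\otimes w^T)U^T$, are then just rewritings: $v\otimes w^T=vw^T$, and the mixed-product identity applied to the $g\times g$ matrix $vw^T$ gives $vw^T(u^T\otimes I_g)=u^T\otimes(vw^T)$, closing the chain.

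The only real obstacle is keeping the block dimensions straight — the blocks in $g\times g^2$ matrices used to define the structured transpose have size $1\times g$, not $g\times g$, and it is easy to misread $u^T\otimes A^T$ as $A^T\otimes u^T$ during a quick computation. Once one commits to writing everything either as explicit $(i,j)$-blocks of size $1\times g$ or as columnwise Kronecker products and then consistently applies Lemma \ref{lem:aug12a8}, both identities fall out with no further work.
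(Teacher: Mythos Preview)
Your proof is correct and is precisely in the spirit of what the paper intends: the paper's own proof consists of the single sentence ``This is a straightforward computation that is similar to the verification of the preceding two lemmas,'' and your argument fills in exactly those details, using Lemma~\ref{lem:aug12a8} (via the involutivity of the structured transpose) together with identity (12) of Theorem~\ref{thm:ids}. Your device of proving part~(2) first and then specializing to $y=v\otimes w$ to obtain part~(1) is a mild streamlining but not a genuinely different approach.
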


\begin{proof} This is a straightforward computation that is similar to the
verification of the preceding two lemmas.
\end{proof}

\begin{lemma}
\label{lem:Z01sym}
The $g \times g^2$ matrix ${\cZ}_{01}$ is block symmetric in
the following sense:
$$
{\cZ}_{01}^{sT}={\cZ}_{01}.
$$
\end{lemma}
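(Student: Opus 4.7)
The plan is to use the recovery formula from item~(5) of the list in Section~\ref{sec:middle} applied with $(i,j)=(0,1)$, which reads
\[
   2\,p_3(x) \;=\; [x_1,\ldots,x_g]\,\mathcal Z_{01}\,([x_1,\ldots,x_g]_2)^T.
\]
Under the transpose convention for vectors of nc-monomials codified in identity~(4) of Theorem~\ref{thm:ids} --- the one under which the $(a,b)$-entry of $([x_1,\ldots,x_g]_2)^T$ is the reversed monomial $x_b\,x_a$ --- this expands as
\[
   2\,p_3(x) \;=\; \sum_{i,j,\ell} (\mathcal Z_{01})_{i,(j,\ell)}\, x_i\, x_\ell\, x_j,
\]
so the coefficient of $x_i x_\ell x_j$ in $p_3$ equals $\tfrac{1}{2}(\mathcal Z_{01})_{i,(j,\ell)}$.

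Next I would invoke the standing hypothesis that $p$ is symmetric, which forces $p_3=p_3^T$, so the coefficient of any cubic monomial $x_i x_\ell x_j$ must equal that of its reversed monomial $x_j x_\ell x_i$.  Reading the latter coefficient off the same formula gives $\tfrac{1}{2}(\mathcal Z_{01})_{j,(i,\ell)}$, so
\[
   (\mathcal Z_{01})_{i,(j,\ell)} \;=\; (\mathcal Z_{01})_{j,(i,\ell)}\qquad\text{for all}\ i,j,\ell\in\{1,\ldots,g\}.
\]
Writing $\mathcal Z_{01}=[c_{ij}]$ as a $g\times g$ block matrix with row-vector entries $c_{ij}\in\mathbb R^{1\times g}$, this relation is exactly the assertion $c_{ij}=c_{ji}$, i.e., $\mathcal Z_{01}^{sT}=\mathcal Z_{01}$.

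The entire difficulty here is notational: one needs to keep straight the Kronecker-product ordering in $[x_1,\ldots,x_g]_2$ and apply the nc-involution correctly when transposing a vector of nc-polynomials.  The case $\deg p<3$ is trivial, since then $p_3=0$, which by injectivity of the map $A\mapsto [x_1,\ldots,x_g]\,A\,([x_1,\ldots,x_g]_2)^T$ from $\mathbb R^{g\times g^2}$ into the cubic part of $\mathbb R\langle x\rangle$ forces $\mathcal Z_{01}=0$.
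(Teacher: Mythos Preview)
Your proof is correct and follows exactly the approach indicated in the paper's own (very brief) proof: use the recovery formula $p_3=\tfrac12[x_1\,\cdots\,x_g]\,\cZ_{01}\,([x_1\,\cdots\,x_g]_2)^T$ together with $p_3=p_3^T$. You have simply filled in the bookkeeping that the paper leaves to the reader, correctly tracking the Kronecker ordering and the involution on monomials to arrive at $(c_{ij})_\ell=(c_{ji})_\ell$.
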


\begin{proof}
This rests on the formula
$$p_3 = \frac{1}{2} [x_1 \ \cdots \ x_g] {\cZ}_{01}
([x_1 \ \cdots \ x_g]_2)^T$$
(see (\ref{eq:nov10b6})) and the fact that  $p_3 = p_3^T$.
\end{proof}

\begin{rem}
\label{rem:oct16a8}
Lemma \ref{lem:Z01sym} is a special case of the more general fact that
if $p$ is a symmetric nc polynomial of degree $d$ and $j\le d-2$ and if the
$g\times g^{j+1}$ matrix $\cZ_{0j}$ is written in block form
as 
$$
\cZ_{0j}=\begin{bmatrix}b_{11}&\cdots &b_{1g}\\
\vdots & &\vdots \\
b_{g1}&\cdots & b_{gg}\end{bmatrix}
\quad \textup{with}\  b_{st}
\in\RR^{1\times g^j},
$$
then $b_{st}=b_{ts}$.
\end{rem}

\subsection{A factoring lemma}
The following lemma of
Vladimir Berkovich \cite{berk} simplifies a number of calculations.

\begin{lemma}
\label{lem:berk}
Let $\varphi(x)=\sum_{j=1}^ga_jx_j$ and $\psi(x)=\sum_{j=1}^gb_jx_j$.
Then the  identity
$$
\varphi(x)f_1(x)=f_2(x)\psi(x)
$$
for nc polynomials $f_1(x)=f_1(x_1,\ldots,x_g)$ and
$f_2(x)=f_2(x_1,\ldots,x_g)$ with $f_1(0)=f_2(0)=0$
implies that there exist a
polynomial $f_3(x)=f_3(x_1,\ldots,x_g)$ such that
$f_1(x)=f_3(x)\psi(x)$ and $f_2(x)=\varphi(x)f_3(x)$.
\end{lemma}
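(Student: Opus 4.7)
The plan is direct coefficient matching after reducing to the homogeneous case. I would first dispose of the degenerate situations by assuming $\varphi\ne 0$ and $\psi\ne 0$; if either vanishes the stated conclusion can fail (for example $\varphi=\psi=x_1$ with $f_1=f_2=1$ satisfies the hypothesis yet admits no valid $f_3$), which is precisely why the assumption $f_1(0)=f_2(0)=0$ is essential.

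Since left multiplication by $\varphi$ and right multiplication by $\psi$ each raise total degree by one, the decomposition $f_i=\sum_{k}f_i^{(k)}$ into homogeneous pieces yields $\varphi f_1^{(d)}=f_2^{(d)}\psi$ for every $d$. The case $d=0$ is vacuous under the hypothesis $f_1(0)=f_2(0)=0$, so it suffices to construct a homogeneous polynomial $f_3^{(d-1)}$ of degree $d-1$ for each $d\ge 1$ and then set $f_3=\sum_{d\ge 1}f_3^{(d-1)}$.

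Fix $d\ge 1$ and expand $f_1^{(d)}=\sum_{|w|=d}\alpha_w w$ and $f_2^{(d)}=\sum_{|w|=d}\beta_w w$ in the monomial basis. Every monomial $m$ of degree $d+1$ has a unique factorization $m=x_j v x_k$ with $|v|=d-1$. Reading off the coefficient of $m$ from both sides of $\varphi f_1^{(d)}=f_2^{(d)}\psi$ produces the master identity
\[
a_j\,\alpha_{vx_k} \;=\; b_k\,\beta_{x_j v}\qquad (j,k\in\{1,\dots,g\},\ |v|=d-1).
\]
Pick indices $j_0,k_0$ with $a_{j_0}\ne 0$ and $b_{k_0}\ne 0$, and define $e_v:=\beta_{x_{j_0}v}/a_{j_0}$; the master identity at $(j,k)=(j_0,k_0)$ then gives the alternative formula $e_v=\alpha_{vx_{k_0}}/b_{k_0}$. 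Setting $f_3^{(d-1)}:=\sum_{|v|=d-1}e_v v$, the desired identities $f_1^{(d)}=f_3^{(d-1)}\psi$ and $\varphi f_3^{(d-1)}=f_2^{(d)}$ reduce, coefficient by coefficient, to $e_v b_k=\alpha_{vx_k}$ and $a_j e_v=\beta_{x_j v}$, both of which are immediate consequences of the master identity combined with the two expressions for $e_v$.

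The main subtlety is that $e_v$ must not depend on the choice of $j_0$ (or $k_0$). This is automatic: for any two admissible choices $j_0,j_0'$ and any $k$ with $b_k\ne 0$, applying the master identity twice gives $\beta_{x_{j_0}v}/a_{j_0}=\alpha_{vx_k}/b_k=\beta_{x_{j_0'}v}/a_{j_0'}$. Beyond this consistency check, together with the already-noted role of $f_1(0)=f_2(0)=0$ at the degree-zero level, the entire argument is routine bookkeeping.
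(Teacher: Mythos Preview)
Your argument is correct and takes a different route from the paper. The paper performs a linear change of variables in the free algebra: assuming $b_g\ne 0$, it sets $y_j=x_j$ for $j<g$ and $y_g=\psi(x)$, so that the identity becomes $(\sum_j c_j y_j)\,\tilde f_1(y)=\tilde f_2(y)\,y_g$; since every monomial on the right ends in $y_g$, every monomial of $\tilde f_1$ (each of positive length, thanks to $f_1(0)=0$) must also end in $y_g$, giving $\tilde f_1=\tilde f_3\,y_g$ at once, and undoing the change of variables yields $f_1=f_3\psi$. You instead read off the single scalar relation $a_j\,\alpha_{vx_k}=b_k\,\beta_{x_jv}$ directly from the coefficient of $x_j v x_k$ and reconstruct $f_3$ explicitly from these numbers. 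The paper's trick is slicker and reusable (it reduces divisibility by an arbitrary linear form to divisibility by a single letter); your version is more computational but entirely transparent, yields explicit formulas for the coefficients of $f_3$, and avoids having to think about automorphisms of the free algebra.

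One small expository wrinkle: the example $\varphi=\psi=x_1$, $f_1=f_2=1$ in your first paragraph has $\varphi$ and $\psi$ both nonzero, so it does not illustrate the clause ``if either vanishes the stated conclusion can fail''; it only shows why the hypothesis $f_1(0)=f_2(0)=0$ matters. These are two separate points and would read more clearly if disentangled.
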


\begin{proof}
Without loss of generality we may assume that $b_g\ne 0$ and
introduce the nonsingular change of variables variables $y_1=x_1,
y_2=x_2,\ldots,y_{g-1}=x_{g-1}$
and $y_g=\sum_{j=1}^gb_jx_j$. Then the given equality is of the form
$$
(c_1y_1+\cdots + c_gy_g)\widetilde{f}_1(y)=\widetilde{f}_2(y)y_g
$$
and hence, upon writing $\widetilde{f}_1(y)=\sum_{i=1}^km_i$ as a sum of
monomials of degree at least one in the variables $y_1,\ldots,y_g$,
obtain that each monomial $m_i$ is
of the form $m_i=\widetilde{m_i}y_g$, where $\widetilde{m_i}$ is a monomial
that is one degree lower than $m_i$.
Therefore,
$\widetilde{f}_1(y)=\widetilde{f}_3(y)y_g$. The identity $f_1(x)=f_3(x)\psi(x)$  follows upon
rewriting  the last formula in terms of $x_1,\ldots,x_g$. The identity
$f_2(x)=\varphi(x)f_4(x)$ for some nc polynomial $f_4(x)$ is established in much the same way. But this in turn implies that $\varphi f_3\psi=\varphi f_4\psi$ and hence that $f_4=f_3$.
\end{proof}

\section{Classifying $\cZ$ with $\mu_-(\cZ) = 1$}
\label{sec:Zneg1}

In this section we study the structure of the symmetric  nc polynomial $p$
and the scalar middle matrix
$\cZ=Z(0)$ ($0\in\mathbb R^g$) associated with
its Hessian $p^{\prime\prime}(x)[h]$ when $\mu_-({\cZ})=1$. In view of
(\ref{eq:nov10a6}),
$$
\mu_-({\cZ})\le 1\Longrightarrow d\le 4 \Longrightarrow
\, \textrm{$\cZ$ is of the  form (\ref{eq:nov10c6})}\,.
$$
Moreover,
$$
\textup{rank}\,{\cZ}_{02}\le 1\,,
$$
thanks to the inequality
$$
\mu_{\pm}({\cZ})\ge \mu_{\pm}({\cZ}_{11})+\textup{rank}\,{\cZ}_{02}\,;
$$
see e.g., \cite{DHMjda} for a proof of the latter.

Thus, if
$\mu_-(\cZ)\le 1$, then
$$
p=p_0+p_1+p_2+p_3+p_4,
$$
where $p_j$ is either equal to a homogeneous polynomial of degree $j$ or
equal to zero. Moreover, the scalar middle matrices of
$p_4^{\prime\prime}(x)[h]$,
$p_3^{\prime\prime}(x)[h]$ and $p_2^{\prime\prime}(x)[h]$ are
$$
\begin{bmatrix}0&0&\cZ_{02}\\0&\cZ_{11}&0\\\cZ_{20}&0&0\end{bmatrix},\quad
\begin{bmatrix}0&\cZ_{01}\\ \cZ_{10}&0\end{bmatrix}\quad\text{and}
\quad\cZ_{00},
$$
respectively, and each of the polynomials $p_4$, $p_3$, $p_2$ can be
recovered from the scalar middle matrix of its Hessian by formula
(\ref{eq:nov10b6}).

 If $\mu_-(\cZ)\le 1$ and $p$ is not the zero polynomial, then there are
four mutually exclusive possibilities:
\begin{enumerate}
\item[\rm(1)] $\mu_-(\cZ)=1$ and $\textup{rank}\cZ_{02}=1$.
\vspace{2mm}
\item[\rm(2)] $\mu_-(\cZ)=1$, $\cZ_{02}=0$ and $\textup{rank}\cZ_{01}=1$.
\vspace{2mm}
\item[\rm(3)] $\mu_-(\cZ)=1$, $\cZ_{02}=0$, $\cZ_{01}=0$ and
$\mu_-(\cZ_{00})=1$.
\vspace{2mm}
\item[\rm(4)] $\mu_-(\cZ)=0$, $\cZ_{02}=0$, $\cZ_{01}=0$ and
$\mu_-(\cZ_{00})=0$.
\end{enumerate}

\subsection{The degree four case}
 \label{subsec:degreefour}
In this subsection we shall assume that
the rank of  ${\cZ}_{02}$ is
one
and hence that $p$ has degree four and
\begin{equation}
\label{eq:may9a7}
{\cZ}_{02} = uw^T\quad\textrm{with $u \in \mathbb{R}^g$,
$w \in \mathbb{R}^{g^3}$, $\Vert u \Vert = 1$ and $w\ne 0$}.
\end{equation}
Then $P=I_g -uu^T$
is the orthogonal projection of $\RR^g$ onto the orthogonal complement
of the vector $u$ in $\RR^g$.

To prove  Theorems \ref{thm:mainZ} and
\ref{thm:mainp}
it is convenient to first establish a number of lemmas.

\begin{lemma}
\label{lem:QZ}
If $\cZ$ is of the form (\ref{eq:nov10c6}) and if (\ref{eq:may9a7})
holds and
\begin{equation}
\label{eq:aug14a8}
E:=\begin{bmatrix} P{\cZ}_{00}P & P{\cZ}_{01} \\
 {\cZ}_{10}P & {\cZ}_{11}
\end{bmatrix},
\end{equation}
then:
\begin{enumerate}
\item[\rm(1)] $\mu_\pm(\cZ)=\mu_\pm(E)+1$.
\item[\rm(2)] $\mu_-(\cZ)=1\Longleftrightarrow E\succeq 0$.
\end{enumerate}
\end{lemma}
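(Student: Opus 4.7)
The plan is to recognize $\cZ$ as a $2\times 2$ block matrix of the special form $\begin{bmatrix}A & B \\ B^T & 0\end{bmatrix}$ with a rank-one off-diagonal block, and to apply the standard signature formula for such matrices.

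First, I regroup the $3\times 3$ block decomposition of $\cZ$ by merging the $(0,0),(0,1),(1,0),(1,1)$ blocks into a single upper-left block:
$$
\cZ=\begin{bmatrix} A & B \\ B^T & 0\end{bmatrix},\qquad
A=\begin{bmatrix}\cZ_{00} & \cZ_{01}\\ \cZ_{10} & \cZ_{11}\end{bmatrix},\qquad
B=\begin{bmatrix}\cZ_{02}\\ 0\end{bmatrix}=\begin{bmatrix} uw^T\\ 0\end{bmatrix}.
$$
Since $\Vert u\Vert=1$ and $w\ne 0$, the block $B$ has rank one. Its range is $\mathrm{span}\{(u,0)\}$, so
$$
\ker(B^T)=\{(x,y)\in\RR^g\oplus\RR^{g^2}:u^Tx=0\},
$$
and the orthogonal projection onto $\ker(B^T)$ is $\widehat P=\mathrm{diag}(P,I_{g^2})$. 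A direct multiplication, using $P=I-uu^T$, gives $\widehat P A\widehat P=E$.

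Next I invoke the standard inertia identity for symmetric block matrices with a zero corner:
$$
\mu_{\pm}\!\begin{bmatrix} A & B\\ B^T & 0\end{bmatrix}
=\mathrm{rank}(B)+\mu_{\pm}(\widehat P A\widehat P),
$$
where $\widehat P$ is the orthogonal projection onto $\ker(B^T)$. Applied to $\cZ$ this immediately yields $\mu_{\pm}(\cZ)=\mu_{\pm}(E)+1$, which is statement (1). Statement (2) is then automatic: $\mu_-(\cZ)=1$ precisely when $\mu_-(E)=0$, and since $E$ is symmetric this is equivalent to $E\succeq 0$.

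To prove the inertia identity I would choose orthonormal bases adapted to the decompositions
$$
\RR^g\oplus\RR^{g^2}=\ker(B^T)\oplus\mathrm{ran}(B),\qquad
\RR^{g^3}=\mathrm{ran}(B^T)\oplus\ker(B).
$$
In these bases $B$ takes a block form with a single invertible square block $B_0$ of order $\mathrm{rank}(B)$, while the $\ker(B)$ component contributes only to the kernel of $\cZ$. Using $B_0$ as a pivot I then perform a block-triangular congruence that annihilates the rows and columns of $A$ in the $\mathrm{ran}(B)$ direction, reducing $\cZ$ to the block-diagonal form
$$
\widehat P A\widehat P\ \oplus\ \begin{bmatrix} 0 & B_0\\ B_0^T & 0\end{bmatrix}\ \oplus\ 0,
$$
whose hyperbolic summand contributes signature $(\mathrm{rank}(B),\mathrm{rank}(B))$.

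The main obstacle is purely bookkeeping: writing the congruence matrices so that every row and column gets cleared consistently and no cross-terms are overlooked. Conceptually the argument is just the elementary observation that a symmetric matrix with a vanishing diagonal corner carries a hyperbolic splitting along $\mathrm{ran}(B)$, and the residual inertia is exactly that of $A$ restricted to $\ker(B^T)$.
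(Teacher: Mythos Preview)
Your argument is correct and is essentially the same as the paper's: both exploit the vanishing lower-right corner to pivot on the rank-one block $\cZ_{02}=uw^T$ via a block-triangular congruence, leaving the hyperbolic pair plus the compression $E=\widehat P A\widehat P$. The only difference is packaging---the paper writes down the explicit congruence matrix $K$ (with entries $K_0,K_1$ built from $w(w^Tw)^{-1}u^T$) and then observes that the resulting $G$ splits as $G_1+G_2$ with orthogonal ranges, whereas you invoke the general inertia identity for $\begin{bmatrix}A&B\\B^T&0\end{bmatrix}$ and then sketch its proof; the underlying congruence is identical.
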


\begin{proof}
The identity
$$
G=\begin{bmatrix} P {\cZ}_{00}P & P{\cZ}_{01}& uw^T \\
{\cZ}_{10}P & {\cZ}_{11}&0\\
wu^T&0&0
\end{bmatrix}=K^T \cZ K
$$
with
$$
 K=\begin{bmatrix}
 I_g &0 & 0 \\ 0 & I_{g^2} & 0
 \\ -K_0 & -K_1 & I_{g^3}
 \end{bmatrix},
$$
$$
K_0=w(w^Tw)^{-1}u^T{\cZ}_{00} (I-\frac{1}{2}uu^T)
\quad\textrm{and}\quad K_1=w(w^Tw)^{-1}u^T{\cZ}_{01}
$$
implies that $\cZ$ is congruent to $G$ and hence that
$\mu_\pm(\cZ)=\mu_\pm(G)$. Moreover,
since
$$
G=G_1+G_2
$$
with
$$
G_1=\begin{bmatrix} P {\cZ}_{00}P & P{\cZ}_{01}& 0\\
{\cZ}_{10}P & {\cZ}_{11}&0\\
0&0&0
\end{bmatrix}\quad\text{and}\quad G_2=\begin{bmatrix}0&0& uw^T \\
0&0&0\\
wu^T&0&0
\end{bmatrix}
$$
and the ranges of these two real symmetric matrices are orthogonal,
it is readily checked
that
\begin{equation}
\label{eq:aug19a8}
\mu_\pm(G)=\mu_\pm(G_1)+\mu_\pm(G_2)=\mu_\pm(G_1)+1=\mu_\pm(E)+1,
\end{equation}
which justifies (1). Thus, as (2) is immediate from (1), the proof is
complete.
\end{proof}

The following lemma appears in \cite{DHMjda}. The statement
and proof are repeated here
for the convenience of the reader.

\begin{lemma}
\label{lem:p4rep}
If $\cZ$ is of the form (\ref{eq:nov10c6}) and (\ref{eq:may9a7}) holds, then
there exists a vector $w_1\in \mathbb R^{g^2}$ so that
\beq
\label{eq:nov8e6}
 w^T = u^T \otimes w_1^T.
\end{equation}
Moreover,
the homogeneous of degree four part of $p$ can be written as
\beq
\label{eq:nov8d6}
p_4(x) = \varphi(x) f_0(x) \varphi(x)
\end{equation}
where $\varphi(x)=[x_1,\ldots,x_g]u$ and $f_0$ is homogeneous of
degree two in $x$.

\end{lemma}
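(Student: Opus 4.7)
The plan is to use formula (\ref{eq:nov10b6}) with $(i,j)=(0,2)$ and $(i,j)=(2,0)$ to exhibit $\varphi(x)$ as both a left and a right factor of $p_4(x)$, then invoke Berkovich's lemma (Lemma \ref{lem:berk}) to combine the two one-sided factorizations into $p_4=\varphi f_0 \varphi$, and finally read off the structure of $w$ by comparing coefficients.

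Using (\ref{eq:nov10b6}) with $(i,j)=(0,2)$ together with $\cZ_{02}=uw^T$ and $[x_1\ \cdots\ x_g]u=\varphi(x)$ gives
\begin{equation*}
p_4(x)=\tfrac12 [x_1\ \cdots\ x_g]\,\cZ_{02}\,([x_1\ \cdots\ x_g]_3)^T = \varphi(x)\,\alpha(x),
\end{equation*}
where $\alpha(x)=\tfrac12 w^T([x_1\ \cdots\ x_g]_3)^T$. Applying (\ref{eq:nov10b6}) instead with $(i,j)=(2,0)$ and using $\cZ_{20}=wu^T$ together with $u^T[x_1\ \cdots\ x_g]^T=\varphi(x)$ yields $p_4(x)=\beta(x)\varphi(x)$ with $\beta(x)=\tfrac12 [x_1\ \cdots\ x_g]_3\,w$. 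Both $\alpha$ and $\beta$ are homogeneous of degree three, so $\alpha(0)=\beta(0)=0$, and the identity $\varphi(x)\alpha(x)=\beta(x)\varphi(x)$ satisfies the hypotheses of Lemma \ref{lem:berk} with $\psi=\varphi$. The lemma produces a polynomial $f_0(x)$ such that $\alpha=f_0\varphi$ and $\beta=\varphi f_0$; since $\alpha$ has degree three, $\varphi$ has degree one, and the nc polynomial ring is an integral domain, $f_0$ is forced to be homogeneous of degree two. Substituting back gives $p_4=\varphi\alpha=\varphi f_0\varphi$, which is (\ref{eq:nov8d6}).

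To extract the structure of $w$ in (\ref{eq:nov8e6}), write $f_0(x)=[x_1\ \cdots\ x_g]_2\,b$ for some $b\in\mathbb R^{g^2}$ and apply the identity $(Ac)(Bd)=(A\otimes B)(c\otimes d)$, valid when $A,B$ are rows of polynomials and $c,d$ are scalar vectors (compare Theorem \ref{thm:ids}), to get
\begin{equation*}
\varphi(x)f_0(x)=([x_1\ \cdots\ x_g]u)([x_1\ \cdots\ x_g]_2\,b)=[x_1\ \cdots\ x_g]_3\,(u\otimes b).
\end{equation*}
Comparing with $\beta(x)=\tfrac12[x_1\ \cdots\ x_g]_3\,w$ and using linear independence of the $g^3$ length-three monomials yields $w=u\otimes(2b)$, so that $w^T=u^T\otimes w_1^T$ with $w_1:=2b$, which is (\ref{eq:nov8e6}).

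The only delicate point is the application of Berkovich's lemma: the hypothesis $\alpha(0)=\beta(0)=0$ is automatic from homogeneity, and because $u$ is a unit vector, $\varphi$ is a genuine nonzero degree-one polynomial to which the lemma applies. Everything else is routine Kronecker-product arithmetic of the type collected in Theorem \ref{thm:ids}.
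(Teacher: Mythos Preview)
Your argument is correct and follows essentially the same path as the paper's: both write $p_4=\varphi\cdot(\text{cubic})$, invoke Berkovich's lemma (Lemma~\ref{lem:berk}) to obtain $p_4=\varphi f_0\varphi$, and then read off $w=u\otimes w_1$ by matching Kronecker expansions. The only cosmetic difference is that the paper compares $f_0\varphi$ with $f=\alpha$ via identity (9) of Theorem~\ref{thm:ids} (which introduces the permutation $\Pi_1$), whereas you compare $\varphi f_0$ with $\beta$ using the simpler identity (\ref{eq:Q3c}); your route avoids $\Pi_1$ and is marginally cleaner.
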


 \begin{proof}
 Recall that
\beq
  p_4 = \frac{1}{2} [x_1 \cdots x_g] \cZ_{02}([x_1 \cdots x_g]_3)^T \quad
\end{equation}
 and hence that
\beq
  p_4 = \frac{1}{2} [x_1 \cdots x_g] uw^T ([x_1 \cdots x_g]_3)^T.
\end{equation}
 Thus
$$
  p_4 = \varphi f\ \textrm{with}\ \varphi =[x_1 \cdots x_g]u\ \textrm{and}\
  f =\frac{1}{2}w^T([x_1 \cdots x_g]_3)^T\,.
$$
Since $p_4 = p_4^T$, it follows that $\varphi f = f^T \varphi,$ and hence
Lemma \ref{lem:berk} implies that
$$f = f_0 \varphi = \frac{1}{2}
[x_1 \cdots x_g]_2 w_0 u^T ([x_1 \cdots x_g])^T,$$
for some degree 2 homogeneous  polynomial
$$
f_0=\frac{1}{2}[x_1 \cdots  x_g]_2w_0\,.
$$
By identity (9) in Theorem \ref{thm:ids}, this can be written as
$$f = \frac{1}{2} (u^T \otimes (\Pi_1^T w_0)^T)([x_1 \cdots x_g]_3)^T\,,$$
where $\Pi_i$ is the permutation that is defined in formula (\ref{eq:nov6d6}).
Comparing the two formulas for $f$, yields the result
with $w_1=\Pi_1^T w_0$. \end{proof}

Recall that
$$ P=I_g-uu^T\quad\text{and}\quad U= u \otimes I_g.$$

\begin{lemma}
\label{lem:UAU}
If $\cZ$ is of the form (\ref{eq:nov10c6}) and (\ref{eq:may9a7}) holds, then:
\begin{enumerate}
\item[\rm(1)] $\cZ_{02}=u(u^T\otimes w_1^T)$ for some vector
$w_1\in\RR^{g^2}$.
\item[\rm(2)] $\cZ_{11} = UAU^T$, where $A=\textup{mat}_g(w_1)=A^T$.
\item[\rm(3)] ${\cZ}_{11} ={\cZ}_{11}UU^T = UU^T{\cZ}_{11}$.
\end{enumerate}
Moreover, if $\mu_-({\cZ})=1$, then
$$P{\cZ}_{01} = P{\cZ}_{01}UU^T$$
and there exist vectors $y\in \mathbb R^g$ and $v\in\mathbb R^{g^2}$
such that:
\begin{enumerate}
\item[\rm(4)] The entry $\cZ_{01}$ in formula (\ref{eq:jun3b7}) can be
expressed as
\begin{eqnarray*}
\cZ_{01}&=& u(u^T\otimes y^T)+ uv^T+(uv^T)^{sT}=\cZ_{01}^{sT}\\ \\
&=& u^T\otimes B^T+u(\textup{vec}B)^T=B^TU^T+u(\textup{vec}B)^T,
\end{eqnarray*}
where
$$
B=\textup{mat}_g\left(v+\frac{1}{2} u\otimes y\right)\quad\textrm{and}\quad
(\textup{mat}_gv)u=0.
$$
\item[\rm(5)] $PB^T=(\textup{mat}_gv)^T$.
\vspace{2mm}
\item[\rm(6)] If $E$ and $E_1$ denote the matrices in (\ref{eq:aug14a8}) and
(iv) of Theorem \ref{thm:mainZ}, respectively, and
$B$ is as in (4), then
$$
E_1=\begin{bmatrix}P\cZ_{00}P&PB^T\\BP&U^T\cZ_{11}U\end{bmatrix}=
\begin{bmatrix}P\cZ_{00}P&(\textup{mat}_gv)^T\\
(\textup{mat}_gv) &U^T\cZ_{11}U\end{bmatrix}
$$
and
$$
E\succeq 0\Longleftrightarrow E_1\succeq 0.
$$
\end{enumerate}
\end{lemma}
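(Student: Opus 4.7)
My plan is to separate the assertions (1)--(3), which hold under (\ref{eq:nov10c6}) and (\ref{eq:may9a7}) alone, from the moreover assertions (4)--(6), which also require $\mu_-(\cZ)=1$ and hence, by Lemma~\ref{lem:QZ}(2), $E\succeq 0$. Throughout I rely on the Kronecker identities of Theorem~\ref{thm:ids} and the structured-transpose computations of Lemmas~\ref{lem:jun3a7}--\ref{lem:aug14c8}.

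For~(1), I read off $w^T=u^T\otimes w_1^T$ from Lemma~\ref{lem:p4rep}, so $\cZ_{02}=u(u^T\otimes w_1^T)$ immediately. For~(2), I take the factorization $p_4=\varphi f_0\varphi$ (also from Lemma~\ref{lem:p4rep}) and write the symmetric quadratic $f_0$ in standard form $f_0=\tfrac{1}{2}[x_1\cdots x_g]A[x_1\cdots x_g]^T$ with $A=A^T$. Two applications of identity~(1) of Theorem~\ref{thm:ids}, one on each side, yield
\[
p_4=\tfrac{1}{2}\,[x_1\cdots x_g]_2\,UAU^T\,([x_1\cdots x_g]_2)^T,
\]
and the uniqueness of the symmetric matrix representing a quadratic form in the distinct monomials of $[x_1\cdots x_g]_2$ forces $\cZ_{11}=UAU^T$. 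Matching against the $\cZ_{02}$-representation of $p_4$ via identity~(11) of Theorem~\ref{thm:ids} (which rewrites $f_0=\tfrac{1}{2}[x_1\cdots x_g]_2\,\textup{vec}(A)$) identifies $w_1=\textup{vec}(A)$, so $A=\textup{mat}_g(w_1)$. Part~(3) is then immediate from $U^TU=(u^Tu)\otimes I_g=I_g$: $\cZ_{11}UU^T=UA(U^TU)U^T=UAU^T=\cZ_{11}$, and symmetrically on the left.

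For the moreover assertions, Lemma~\ref{lem:QZ}(2) gives $E\succeq 0$, forcing $\textup{col}(\cZ_{10}P)\subseteq\textup{col}(\cZ_{11})\subseteq\textup{col}(U)$ and hence $UU^T\cZ_{10}P=\cZ_{10}P$; transposing yields $P\cZ_{01}=P\cZ_{01}UU^T$. Setting $\textup{mat}_g v:=U^T\cZ_{10}P$, I get $(\textup{mat}_g v)u=0$ (since $Pu=0$) and $P\cZ_{01}=(\textup{mat}_g v)^TU^T=(uv^T)^{sT}$, the last equality by Lemma~\ref{lem:aug14c8}(2). The residue $R:=\cZ_{01}-uv^T-(uv^T)^{sT}$ then satisfies $PR=0$, so $R=ur^T$ for some $r\in\RR^{g^2}$. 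Block symmetry $\cZ_{01}^{sT}=\cZ_{01}$ (Lemma~\ref{lem:Z01sym}) transfers to $R$, and via Lemma~\ref{lem:aug14c8}(2) this imposes $(\textup{mat}_g r)^TU^T=ur^T$; a direct entry comparison shows $\textup{mat}_g r=yu^T$ for some $y\in\RR^g$, i.e.\ $r=u\otimes y$. This yields the first decomposition in~(4); the second, equivalent form follows by collecting terms into $B=\textup{mat}_g(v+\tfrac{1}{2}u\otimes y)$ and applying Lemma~\ref{lem:aug12a8}. The identity $PB^T=(\textup{mat}_g v)^T$ in~(5) is then a one-line consequence of $Pu=0$.

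For~(6), a direct multiplication using $\cZ_{11}=UAU^T$ and $P\cZ_{01}=PB^TU^T$ gives $E=LE_1L^T$ with $L=\begin{bmatrix}I_g&0\\0&U\end{bmatrix}$. Since $U^TU=I_g$, we have $L^TL=I_{2g}$ and hence $L^TEL=L^T(LE_1L^T)L=E_1$; this yields $E\succeq 0\Longleftrightarrow E_1\succeq 0$. The main technical obstacle is the accounting in~(4): the three overlapping pieces $u(u^T\otimes y^T)$, $uv^T$, and $(uv^T)^{sT}$ must be matched consistently against the orthogonal decomposition $\cZ_{01}=uu^T\cZ_{01}+P\cZ_{01}$, and it is precisely the block symmetry of $\cZ_{01}$ that forces the residue $r=R^Tu$ to take the structured form $u\otimes y$. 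The identities of Lemmas~\ref{lem:jun3a7}--\ref{lem:aug14c8} are designed to make this bookkeeping tractable.
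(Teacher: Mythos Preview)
Your proof is correct and follows essentially the same route as the paper's: Lemma~\ref{lem:p4rep} for (1)--(2), $U^TU=I_g$ for (3), the range condition from $E\succeq 0$ (Lemma~\ref{lem:QZ}) for $P\cZ_{01}=P\cZ_{01}UU^T$, the block symmetry of $\cZ_{01}$ together with the structured-transpose identities for (4), and the factorization $E=LE_1L^T$ with $L=\mathrm{diag}(I_g,U)$ for (6). The only organizational difference is in (4): the paper decomposes $\cZ_{01}=\alpha+\beta+\gamma$ directly via the projections $P$ on the left and $P\otimes I_g$, $uu^T\otimes I_g$ on the right, defines $y$ and $v$ from $\gamma$ and $\beta$, and then invokes Lemma~\ref{lem:jun3a7} to show $\alpha^{sT}=\beta$; you instead define $v$ first, peel off $uv^T+(uv^T)^{sT}$, and use block symmetry to force the residue into the form $u(u^T\otimes y^T)$. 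Both arguments rest on the same identities and yield the same $y,v$.
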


\begin{proof}
  Item (1) was established in Lemma \ref{lem:p4rep}. Item (2) rests on the
interplay between the formulas
\begin{equation*}
 \begin{split}
  p_4(x)= & \frac{1}{2} [x_1 \cdots x_g] {\cZ}_{02}([x_1 \cdots x_g]_3)^T \\
  p_4(x)= & \frac{1}{2} [x_1 \cdots x_g]_2 {\cZ}_{11}([x_1 \cdots x_g]_2)^T\,.
 \end{split}
\end{equation*}

In view of (1),
\begin{eqnarray*}
 p_4(x)
  &=& \frac{1}{2}\varphi(x)(u^T\otimes w_1^T)([x_1 \cdots x_g]_3)^T\\
&=& \frac{1}{2}\varphi(x)\left(([x_1 \cdots x_g]_3)(u\otimes w_1)\right)^T\\
&=& \frac{1}{2}\varphi(x)\left(\varphi(x)[x_1 \cdots x_g]_2w_1\right)^T\,.
\end{eqnarray*}
But, if the vector $w_1\in\RR^{g^2}$ is expressed as
$$
w_1=\textup{vec}\,A=\textup{vec}\,\begin{bmatrix}a_1&\cdots&a_g\end{bmatrix}
\quad\textrm{with}\quad a_i\in\RR^g\,,
$$
then
$$
[x_1 \cdots x_g]_2w_1=[x_1 \cdots x_g]A^T([x_1 \cdots x_g])^T
$$
and thus, as
$$
[x_1 \cdots x_g]u[x_1 \cdots x_g]=[x_1 \cdots x_g]_2U
$$
(by (1) of Theorem \ref{thm:ids} with $A=I_g$), the last formula for
$p_4(x)$ can be rewritten as
\begin{eqnarray*}
 p_4(x)
  &=& \frac{1}{2}\varphi(x)[x_1 \cdots x_g]A
\begin{bmatrix}x_1\\ \vdots\\x_g\end{bmatrix}\varphi(x)\\
&=&\frac{1}{2}[x_1 \cdots x_g]_2UAU^T([x_1 \cdots x_g]_2)^T\,,
\end{eqnarray*}
which, upon comparison with the formula for $p_4$ in terns of
$\cZ_{11}$, serves to justify (2).

Next, (3) is immediate from (2), since $U^TU=I_g$.

The rest of the proof is carried out under the added assumption that
$\mu_-({\cZ})=1$.
Then Lemma \ref{lem:QZ} implies that $E\succeq 0$ and hence, by a well
known argument (see e.g., Lemma 12.19 in \cite{dymbk}), that
$P{\cZ}_{01} = K {\cZ}_{11}$
for some $K\in \mathbb{R}^{g \times g^2}$.  Therefore,
$$
 P {\cZ}_{01}UU^T = K {\cZ}_{11} UU^T = K {\cZ}_{11}
 = P {\cZ}_{01},
$$
and, as $UU^T=uu^T\otimes I_g$,
\begin{equation}
 \label{eq:Z01}
  {\cZ}_{01} = P{\cZ}_{01}UU^T + uu^T {\cZ}_{01} (P\otimes I_g) +
uu^T {\cZ}_{01} (uu^T\otimes I_g)
         =\alpha+\beta+\gamma.
\end{equation}

The next step is to analyze the three terms
$$
\alpha=P{\cZ}_{01}UU^T,\quad \beta=uu^T {\cZ}_{01} (P\otimes I_g)\quad
\textrm{and}\quad \gamma= uu^T {\cZ}_{01} (uu^T\otimes I_g)
$$
in (\ref{eq:Z01}).

Let $y^T= u^T \cZ_{01}(u\otimes I_g)$. Then
$$
\gamma=uy^TU^T=u(u^T\otimes y^T),
$$
which has $ij$ block entry
$$
u_iu_jy^T\in\mathbb{R}^{1\times g^2}.
$$
Therefore, $\gamma^{sT}=\gamma$, since $u_iu_jy^T=u_ju_iy^T$.

Next, in view of Lemma \ref{lem:jun3a7}, the structured transpose
$\alpha^{sT}$ of $\alpha$ is
\begin{eqnarray*}
\alpha^{sT}&=&(\cZ_{01}UU^T)^{sT}-(uu^T\cZ_{01}UU^T)^{sT}\\
&=&uu^T{\cZ}_{01}^{sT}-uu^T(uu^T\cZ_{01})^{sT}\\
&=&uu^T{\cZ}_{01}^{sT}-uu^T\cZ_{01}^{sT}UU^T=uu^T\cZ_{01}(I_{g^2}-UU^T)\\
&=&uu^T{\cZ}_{01}^{sT}(P\otimes I_g)=\beta,
\end{eqnarray*}
since ${\cZ}_{01}^{sT}={\cZ}_{01}$ by Lemma \ref{lem:Z01sym}.

The first
advertised
form of ${\cZ}_{01}$ in (4)
is obtained by setting $v^T= u^T\cZ_{01}(P\otimes I_g)$, because then
$\beta= uv^T$ and
$\alpha=\beta^{sT}$. Moreover, since $v=(P\otimes I_g)w$ with $w=\cZ_{10}u$,
the identity
$$
\textup{mat}_g((P\otimes I_g)w)=(\textup{mat}_g w)P^T
$$
implies that $(\textup{mat}_g v)u=0$, since $P^Tu=Pu=0$. The second formula
for $\cZ_{01}$ in (4) follows from the first formula and
Lemma \ref{lem:aug12a8}.

Next, since $(\textup{mat}_g(u\otimes y))^T=uy^T$, it follows immediately
from the last formula for $B$ in (4) that $PB^T=P(\textup{mat}_g v)^T$.
But this yields (5), since $(\textup{mat}_g v)u=0$.

Finally, since $\cZ_{01}=B^TU^T+u(\textup{vec}\,B)^T$  by (4), and $Pu=0$,
$$
E=\begin{bmatrix}P\cZ_{00}P&P\cZ_{01}\\ \cZ_{10}P&\cZ_{11}\end{bmatrix}=
\begin{bmatrix}P\cZ_{00}P&PB^TU^T\\ UBP&\cZ_{11}\end{bmatrix}
$$
and hence, as $\cZ_{11}=UU^T\cZ_{11}UU^T$,
$$
E=\begin{bmatrix}I_g&0\\ 0&U\end{bmatrix}
\begin{bmatrix}P\cZ_{00}P&PB^T\\ BP&U^T\cZ_{11}U\end{bmatrix}
\begin{bmatrix}I_g&0\\ 0&U^T\end{bmatrix}=
\begin{bmatrix}I_g&0\\ 0&U\end{bmatrix}E_1
\begin{bmatrix}I_g&0\\ 0&U^T\end{bmatrix}.
$$
Thus, $E_1\succeq 0 \Longrightarrow E\succeq 0$. On the other hand,
the formula
$$
E_1=\begin{bmatrix}I_g&0\\ 0&U^T\end{bmatrix}E
\begin{bmatrix}I_g&0\\ 0&U\end{bmatrix}
$$
is also valid, since $U^TU=I_g$.
Therefore, $E\succeq 0 \Longrightarrow E_1\succeq 0$ and consequently
(6) holds.
\end{proof}

\begin{lemma}
\label{lem:jun12a7}
An nc polynomial ${\tt{p}}$ is of the form
  ${\tt{p}}=\varphi f \varphi$, where
\begin{itemize}
  \item[(i)] $\varphi =\sum u_j x_j$ and $u$ is a unit vector with entries
$u_1,\ldots,u_g$;
  \item[(ii)] $f=[x_1\ \cdots\ x_g]A [x_1\ \cdots\ x_g]^T$ with
$A=A^T\in\RR^{g\times g}$
\end{itemize}
 if and only if the
 scalar middle matrix $\mathfrak{Z}$ of ${\tt{p}}^{\prime\prime}$ is
\begin{equation}
\label{eq:may11a7}
  \mathfrak{Z} =2\begin{bmatrix} 0 & 0 & u (u^T\otimes w_A^T) \\
                                0 & UAU^T & 0\\
                              (u\otimes w_A)u^T & 0 & 0 \end{bmatrix},
\end{equation}
where $w_A=\textup{vec}(A)$ and $U=u\otimes I_g$.
\end{lemma}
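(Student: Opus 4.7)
The plan is to leverage the recovery formula $p_{i+j+2}(x) = \tfrac{1}{2}[x_1\,\cdots\,x_g]_{i+1}\,\cZ_{ij}\,([x_1\,\cdots\,x_g]_{j+1})^T$ from item (5) of \S\ref{sec:middle}, which reads off each homogeneous component $p_k$ of a polynomial from a block of its scalar middle matrix. Both directions of the equivalence are then direct calculations that match blocks against homogeneous components.

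For the forward direction (``only if''), suppose ${\tt{p}}=\varphi f\varphi$ as specified. Since ${\tt{p}}$ is homogeneous of degree four, ${\tt{p}}_2={\tt{p}}_3=0$. Taking $(i,j)=(0,0)$ and $(0,1)$ in the recovery formula gives $\tfrac{1}{2}[x_1\,\cdots\,x_g]\mathfrak{Z}_{00}[x_1\,\cdots\,x_g]^T=0$ and $\tfrac{1}{2}[x_1\,\cdots\,x_g]\mathfrak{Z}_{01}([x_1\,\cdots\,x_g]_2)^T=0$; linear independence of the monomials $x_ix_j$ and $x_ix_jx_k$ in $\RRx$ forces $\mathfrak{Z}_{00}=0$ and $\mathfrak{Z}_{01}=0$ (hence $\mathfrak{Z}_{10}=0$). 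For the top block, the crux is the identity $[x_1\,\cdots\,x_g]_2\,U=\varphi(x)[x_1\,\cdots\,x_g]$, which is item (1) of Theorem \ref{thm:ids} with $A=I_g$ (and is just the mixed product rule for Kronecker products). Applying it on both sides yields
$$
[x_1\,\cdots\,x_g]_2\,UAU^T\,([x_1\,\cdots\,x_g]_2)^T=\varphi(x)\,[x_1\,\cdots\,x_g]A[x_1\,\cdots\,x_g]^T\,\varphi(x)=\varphi f\varphi,
$$
so $\mathfrak{Z}_{11}=2UAU^T$ reproduces ${\tt{p}}_4$ via the $(1,1)$ instance of the recovery formula. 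A parallel computation, combining $[x_1\,\cdots\,x_g]u=\varphi(x)$ with item (11) of Theorem \ref{thm:ids} (giving $w_A^T([x_1\,\cdots\,x_g]_2)^T=f$, valid because $A=A^T$), shows that $\mathfrak{Z}_{02}=2u(u^T\otimes w_A^T)$ reproduces ${\tt{p}}_4$ through the $(0,2)$ instance; the two representations are consistent.

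The reverse direction (``if'') now comes for free: starting from $\mathfrak{Z}$ in the stated block form, the recovery formula applied block-by-block returns ${\tt{p}}_2=0$, ${\tt{p}}_3=0$, and, by the same key identity, ${\tt{p}}_4=\tfrac{1}{2}[x_1\,\cdots\,x_g]_2(2UAU^T)([x_1\,\cdots\,x_g]_2)^T=\varphi f\varphi$. Hence ${\tt{p}}=\varphi f\varphi$.

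The main obstacle is purely bookkeeping: one must keep the row/column Kronecker products and the noncommutative transpose conventions consistent so that the two representations $\mathfrak{Z}_{02}=2u(u^T\otimes w_A^T)$ and $\mathfrak{Z}_{11}=2UAU^T$ are both checked to encode the same degree-four polynomial $\varphi f\varphi$. Once the single identity $[x_1\,\cdots\,x_g]_2\,U=\varphi(x)[x_1\,\cdots\,x_g]$ is in hand, everything else is a routine application of the identities catalogued in Theorem \ref{thm:ids}.
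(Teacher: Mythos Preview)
Your proposal is correct and follows essentially the same approach as the paper. Both rely on the recovery formula (\ref{eq:nov10b6}) together with the key identity $[x_1\,\cdots\,x_g]_2\,U=\varphi(x)[x_1\,\cdots\,x_g]$ from Theorem \ref{thm:ids}(1) to match the blocks $\mathfrak{Z}_{11}$ and $\mathfrak{Z}_{02}$ (equivalently $\mathfrak{Z}_{20}$) against $\varphi f\varphi$; your treatment is slightly more explicit than the paper's about the vanishing of $\mathfrak{Z}_{00}$ and $\mathfrak{Z}_{01}$ via linear independence of monomials, but otherwise the arguments coincide.
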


\begin{proof}
Suppose first that
${\tt{p}}=\varphi f \varphi$
 where $\varphi$ and $f$ satisfy conditions (i) and (ii), respectively.
Then
\begin{equation*}
 \begin{split}
  {\tt{p}} =&   [x_1 \ \cdots \ x_g] u [x_1 \
  \cdots \ x_g]A([x_1 \ \cdots \ x_g])^T u^T([x_1 \ \cdots \ x_g])^T\\
  =&   [x_1 \ \cdots \ x_g]_2 UAU^T([x_1 \ \cdots \ x_g]_2)^T,
 \end{split}
\end{equation*}
where the first equality comes from the hypotheses and the
second from identity (1) (with $A=I_g$) in Theorem \ref{thm:ids}.
Hence $\mathfrak{Z}_{11}=2UAU^T$.

Similarly,
the formulas
\begin{eqnarray*}
\varphi(x)\begin{bmatrix}x_1 &\cdots &x_g\end{bmatrix}A
\begin{bmatrix}x_1 \\ \vdots \\ x_g\end{bmatrix}&=&
\varphi(x)\begin{bmatrix}x_1 &\cdots &x_g\end{bmatrix}_2w_A\\
&=&(\begin{bmatrix}x_1 &\cdots &x_g\end{bmatrix}_3)(u\otimes w_A)
\end{eqnarray*}
with $w_A=\mbox{vec}(A)$ imply that
$$
  {\tt{p}} = ([x_1
     \ \cdots \ x_g]_3)(u\otimes w_A)u^T( [x_1 \ \cdots \ x_g])^T,
$$
and hence that $\mathfrak{Z}_{20}=2(u\otimes w_A)u^T$.

Conversely, if $\mathfrak{Z}$ is of the form (\ref{eq:may11a7}), then the
formulas
$$
p=\frac{1}{2}[x_1,\ldots,x_g]_2\mathfrak{Z}_{11}([x_1,\ldots,x_g]_2)^T
$$
and (1) of Theorem \ref{thm:ids} (with $A=I_g$) lead easily to the
conclusion that
 ${\tt{p}}=\varphi f\varphi$, as claimed.
\end{proof}

\begin{lemma}
\label{lem:p3form}
An nc   polynomial ${\tt{p}}$ is of
 the form  ${\tt{p}}=\varphi q +q^T \varphi$, where
\begin{itemize}
 \item[(i)] $\varphi =\sum u_j x_j$ and $u$ is a unit vector with entries
$u_1,\ldots,u_g$; and
 \item[(ii)] $q=[x_1,\ldots,x_g]C\begin{bmatrix} x_1\\ \vdots\\x_g
\end{bmatrix}$ for some matrix $C\in\RR^{g\times g}$
\end{itemize}
if and only if  the scalar middle matrix $\mathfrak{Z}$ of
${\tt{p}}^{\prime\prime}$ is
\begin{equation}
\label{eq:jun3b7}
\mathfrak{Z}=
2\begin{bmatrix}
0&u^T\otimes C^T+u(\textup{vec}\,C)^T\\
u\otimes C+(\textup{vec}\,C)u^T&0\end{bmatrix}\,.
\end{equation}
\end{lemma}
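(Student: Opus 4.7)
The plan is to mirror the strategy used in the proof of Lemma \ref{lem:jun12a7}, computing the scalar middle matrix of ${\tt{p}}''$ directly from the defining form of ${\tt{p}}$ via the recovery formula (\ref{eq:nov10b6}) and the Kronecker identities of Theorem \ref{thm:ids}. Since ${\tt{p}}=\varphi q+q^T\varphi$ is homogeneous of degree three, $p_2=0$ forces $\mathfrak{Z}_{00}=0$, and no $(1,1)$ block is present because $(d-2)-(i+j)<0$ when $i=j=1$ and $d=3$. Together with the symmetry $\mathfrak{Z}_{10}=\mathfrak{Z}_{01}^T$, the lemma therefore reduces to the single assertion
$$
{\tt{p}} \;=\; \tfrac12[x_1\ \cdots\ x_g]\,\mathfrak{Z}_{01}\,([x_1\ \cdots\ x_g]_2)^T \quad\text{with}\quad \mathfrak{Z}_{01}=2\bigl(u^T\otimes C^T+u(\textup{vec}\,C)^T\bigr).
$$

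The main step is to recognize each summand of ${\tt{p}}$ as a sandwich of this form. Setting $w_C=\textup{vec}\,C$, identity (2) of Theorem \ref{thm:ids} with $A=C$ gives
$$
(u^T\otimes C^T)([x_1\ \cdots\ x_g]_2)^T \;=\; C^T\begin{bmatrix}x_1\\ \vdots\\ x_g\end{bmatrix}\varphi,
$$
and premultiplying by $[x_1\ \cdots\ x_g]$ yields $[x_1\ \cdots\ x_g](u^T\otimes C^T)([x_1\ \cdots\ x_g]_2)^T=q^T\varphi$. On the other hand, identity (11) of Theorem \ref{thm:ids} gives $([x_1\ \cdots\ x_g]_2)w_C=q^T$; taking the involution-transpose of both (scalar) sides turns this into $w_C^T([x_1\ \cdots\ x_g]_2)^T=q$, and premultiplying by $\varphi=[x_1\ \cdots\ x_g]u$ produces $[x_1\ \cdots\ x_g]uw_C^T([x_1\ \cdots\ x_g]_2)^T=\varphi q$. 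Adding the two expressions and multiplying by $2$ identifies $\mathfrak{Z}_{01}$ as claimed, and the converse follows by reading the same computation in the opposite direction. The $(1,0)$ block is then forced by symmetry to be $\mathfrak{Z}_{01}^T=2(u\otimes C+w_Cu^T)$, matching the stated form.

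The single delicate point is the interpretation of the transpose on $([x_1\ \cdots\ x_g]_2)^T$: identity (4) of Theorem \ref{thm:ids} shows that this is the involution-transpose, so that its $((i-1)g+j)$-th entry is $x_jx_i$ rather than $x_ix_j$. Keeping that convention straight is what allows identity (11) to deliver $q$ rather than $q^T$ in the $\varphi q$ computation; once that is in hand, the remainder of the argument is routine bookkeeping with Kronecker products.
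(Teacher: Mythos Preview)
Your proof is correct and follows essentially the same route as the paper: direct computation with the Kronecker identities of Theorem~\ref{thm:ids} and the recovery formula~(\ref{eq:nov10b6}). The only organizational difference is that for the forward direction the paper expands ${\tt p}''=2\varphi'q'+\varphi q''+2(q')^T\varphi'+(q'')^T\varphi$ and reads off $\mathfrak{Z}_{01}$ from the border-vector form, whereas you bypass the Hessian entirely and use the injectivity of the map $M\mapsto [x_1\cdots x_g]\,M\,([x_1\cdots x_g]_2)^T$ to identify $\mathfrak{Z}_{01}$ directly from ${\tt p}$; this is exactly the computation the paper does for the converse, run in both directions, and is also the strategy the paper itself uses in the adjacent Lemma~\ref{lem:jun12a7}.
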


\begin{proof}
If ${\tt{p}}=\varphi q +q^T \varphi$ with $\varphi$ and $q$ as described in
the first part of the lemma, then
$$
{\tt{p}}^{\prime\prime}= 2\varphi^\prime q^\prime+\varphi q^{\prime\prime}
+2 (q^\prime)^T\varphi^\prime+ (q^{\prime\prime})^T \varphi^\prime
$$
and the indicated form of $\mathfrak{Z}$ is easily obtained by direct
computation with the aid of the identities in Theorem \ref{thm:ids}.
Conversely, if $\mathfrak Z$ is of the form (\ref{eq:jun3b7}), then
direct computation based on the formula
$$
p=\frac{1}{2}[x_1,\ldots,x_g]{\cZ}_{01}([x_1,\ldots,x_g]_2)^T
$$
serves to recover ${\tt{p}}$.
\end{proof}

\subsection{The case of degree three}
 \label{subsec:degreethree}
In this subsection we assume that $\cZ_{02}=0$ and
that $\cZ_{01}$ is rank one.

.

\begin{lemma}
\label{lem:aug12b8}
If $\cZ_{i+j}=0$ in (\ref{eq:nov10c6}) for $i+j=2$ and $\cZ_{01}=uw_1^T$ for
some pair of
vectors $u\in\RR^g$ and $w_1\in\RR^{g^2}$ with $\Vert u\Vert=1$ and
$w_1\ne 0$
and $P=I_g-uu^T$, then:
$$
\mu_-(\cZ)=1\Longleftrightarrow P\cZ_{00}P\succeq 0.
$$
\end{lemma}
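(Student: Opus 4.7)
The plan is to exhibit an explicit congruence that reduces $\cZ$ to a direct sum whose inertia can be read off. Because every block $\cZ_{ij}$ with $i+j\ge 2$ vanishes, the inertia of $\cZ$ coincides with that of the $(g+g^2)\times(g+g^2)$ matrix
$$
M \;=\; \begin{bmatrix} \cZ_{00} & uw_1^T \\ w_1 u^T & 0 \end{bmatrix}.
$$
I will choose orthonormal bases adapted to the decompositions $\RR^g=\mathrm{span}(u)\oplus u^\perp$ and $\RR^{g^2}=\mathrm{span}(w_1)\oplus w_1^\perp$. In such bases the rank-one block $uw_1^T$ becomes a single scalar $\tilde w:=\|w_1\|>0$ coupling the $u$-direction to the $w_1$-direction and is zero elsewhere.

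Writing $a:=u^T\cZ_{00}u$, letting $c$ denote the component of $\cZ_{00}u$ lying in $u^\perp$, and letting $D$ denote $P\cZ_{00}P$ regarded as an operator on $u^\perp$, $M$ acquires the block form (with block sizes $1,g-1,1,g^2-1$)
$$
M \;=\; \begin{bmatrix} a & c^T & \tilde w & 0 \\ c & D & 0 & 0 \\ \tilde w & 0 & 0 & 0 \\ 0 & 0 & 0 & 0 \end{bmatrix}.
$$
The central step is a Schur-like elimination using the nonzero off-diagonal pivot $\tilde w$: subtract $(c/\tilde w)$ times the third (block) row from the second row and perform the symmetric column operation. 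Since every entry of the $w_1$-row and $w_1$-column of $M$ is zero except the pivot itself, this elimination zeros out $c$ and $c^T$ without changing $a$, $D$, or any other entry.

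After this congruence $\cZ$ is congruent to
$$
\begin{bmatrix} a & \tilde w \\ \tilde w & 0 \end{bmatrix} \;\oplus\; D \;\oplus\; 0.
$$
The leading $2\times 2$ block has determinant $-\tilde w^2<0$, so its inertia is $(1,1,0)$. Moreover $D$, viewed on $u^\perp$, has the same nonzero spectrum as $P\cZ_{00}P$ viewed as a $g\times g$ matrix (which has $u$ in its kernel). Hence $\mu_-(\cZ)=1+\mu_-(P\cZ_{00}P)$, and consequently $\mu_-(\cZ)=1$ if and only if $\mu_-(P\cZ_{00}P)=0$, i.e.\ $P\cZ_{00}P\succeq 0$. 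The only real obstacle is bookkeeping through the block congruence; the conceptual content is the standard fact that a rank-one off-diagonal coupling against a zero diagonal contributes a hyperbolic $(+,-)$ pair to the signature, and this accounts for the lone negative eigenvalue.
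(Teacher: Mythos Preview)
Your proof is correct and follows essentially the same approach as the paper: both produce a congruence that separates the rank-one hyperbolic pair coming from $uw_1^T$ from the block $P\cZ_{00}P$, arriving at $\mu_-(\cZ)=1+\mu_-(P\cZ_{00}P)$. The paper writes the congruence in the original coordinates via an explicit matrix $K=w_1(w_1^Tw_1)^{-1}u^T\cZ_{00}(I_g-\tfrac12 uu^T)$, whereas you first pass to adapted orthonormal bases and then carry out the equivalent elimination; the content is the same.
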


\begin{proof}
The formula
$$
\begin{bmatrix}\cZ_{00}&\cZ_{01}\\ \cZ_{10}&0\end{bmatrix}=
\begin{bmatrix}I_g&K^T\\0&I_{g^2}\end{bmatrix}
\begin{bmatrix}P\cZ_{00}P&uw_1^T\\ w_1u^T&0\end{bmatrix}
\begin{bmatrix}I_g&0\\K&I_{g^2}\end{bmatrix},
$$
with
$$
K=w_1(w_1^Tw_1)^{-1}u^T\cZ_{00}\left\{I_g-\frac{uu^T}{2}\right\}
$$
implies that
$$
\mu_-(\cZ)=\mu_-(P\cZ_{00}P)+\textup{rank}(uw_1^T)=\mu_-(P\cZ_{00}P)+1,
$$
which justifies the claim.
\end{proof}

\begin{lemma}
\label{lem:nov10a6}
An nc polynomial ${\tt{q}}$ of degree three is of the form
 ${\tt{q}}=\varphi \psi \varphi$, where
\begin{itemize}
 \item[(i)]  $\varphi =[x_1 \ \cdots \ x_g]u$, $u\in\RR^g$,
\vspace{2mm}
 \item[(ii)] $\psi=[x_1\ \cdots \ x_g]y$ and $y\in\RR^g$,
\end{itemize}
 if and only if the scalar middle matrix $\mathfrak{Z}$ of
$\tt{q}^{\prime\prime}$ is
 of the form
$$
 \mathfrak{Z} =2\begin{bmatrix} 0 & u(u^T\otimes y^T) \\
                (u \otimes y)u^T & 0 \end{bmatrix}.
$$
\end{lemma}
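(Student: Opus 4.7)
The plan is to compute the scalar middle matrix of $\tt{q} = \varphi\psi\varphi$ directly and invoke the recovery formula (\ref{eq:nov10b6}), mirroring the approach used in Lemma \ref{lem:jun12a7} and Lemma \ref{lem:p3form}.

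For the forward direction, apply identity (1) of Theorem \ref{thm:ids} with $A = y$ viewed as a $g \times 1$ matrix to obtain $[x_1\ \cdots\ x_g]_2(u \otimes y) = \varphi \cdot [x_1\ \cdots\ x_g]\,y = \varphi\psi$. Combined with $u^T[x_1\ \cdots\ x_g]^T = \varphi$, this yields
\[
\tt{q} \;=\; \varphi\psi\varphi \;=\; [x_1\ \cdots\ x_g]_2\,(u \otimes y)\,u^T\,[x_1\ \cdots\ x_g]^T.
\]
Since $\tt{q}$ is homogeneous of degree three, the only potentially nonzero blocks of its scalar middle matrix are $\mathfrak{Z}_{01}$ and $\mathfrak{Z}_{10}$, and comparing the display above with the $(i,j) = (1,0)$ instance of (\ref{eq:nov10b6}), namely $p_3 = \frac{1}{2}[x_1\ \cdots\ x_g]_2\,\mathfrak{Z}_{10}\,[x_1\ \cdots\ x_g]^T$, reads off $\mathfrak{Z}_{10} = 2(u \otimes y)u^T$; whence $\mathfrak{Z}_{01} = \mathfrak{Z}_{10}^T = 2u(u^T \otimes y^T)$, as claimed.

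Conversely, if $\mathfrak{Z}$ has the stated form, then $\mathfrak{Z}_{00} = 0$ forces the degree-two part of $\tt{q}$ to vanish, so $\tt{q}$ is homogeneous of degree three and
\[
\tt{q} \;=\; \tfrac12[x_1\ \cdots\ x_g]_2\,\mathfrak{Z}_{10}\,[x_1\ \cdots\ x_g]^T \;=\; [x_1\ \cdots\ x_g]_2\,(u \otimes y)\,u^T\,[x_1\ \cdots\ x_g]^T.
\]
Identity (1) of Theorem \ref{thm:ids}, now read in reverse, gives $[x_1\ \cdots\ x_g]_2(u \otimes y) = \varphi\psi$ and $u^T[x_1\ \cdots\ x_g]^T = \varphi$, so $\tt{q} = \varphi\psi \cdot \varphi = \varphi\psi\varphi$. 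The main (modest) obstacle is the bookkeeping with Kronecker products: one should use the $(i,j)=(1,0)$ form of the recovery formula rather than $(0,1)$, so that the factor $\varphi$ coming from $u^T[x_1\ \cdots\ x_g]^T$ sits to the right of $\varphi\psi$, producing the symmetric polynomial $\varphi\psi\varphi$.
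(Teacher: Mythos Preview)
Your proof is correct. The approach differs from the paper's own argument for this lemma but aligns with the method the paper uses for Lemma~\ref{lem:jun12a7}: you express $\tt{q}$ directly as $[x_1\cdots x_g]_2\,(u\otimes y)\,u^T\,[x_1\cdots x_g]^T$ via identity~(1) of Theorem~\ref{thm:ids} and then read off $\mathfrak{Z}_{10}$ from the recovery formula~(\ref{eq:nov10b6}), bypassing the Hessian altogether. The paper, by contrast, expands $\tt{q}^{\prime\prime}=2\varphi'\psi'\varphi+2\varphi'\psi\varphi'+2\varphi\psi'\varphi'$, observes that only the term $2\varphi'\psi'\varphi$ contributes to $\mathfrak{Z}_{01}$, and identifies that block from the border-vector form of $\varphi\psi'$. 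Your route is shorter and arguably more uniform with the neighboring lemmas; the paper's route makes the Hessian structure more visible. Both rely on the same converse step via~(\ref{eq:nov10b6}).
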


\begin{proof}
If ${\tt{q}}=\varphi \psi \varphi$, with $\varphi$ and $\psi$ as in (i)
and (ii), then
$$
{\tt{q}}^{\prime\prime}=2\varphi^\prime \psi^\prime\varphi+
2\varphi^\prime \psi\varphi^\prime+2\varphi \psi^\prime\varphi^\prime\,.
$$
However, the only term that contributes to $\gZ_{01}$ is
$$
2\varphi^\prime \psi^\prime\varphi=2[h_1\ \cdots\ h_g]u(\varphi
\psi^\prime)^T\,.
$$
Therefore, since
\begin{eqnarray*}
\varphi \psi^\prime&=&[x_1 \ \cdots \ x_g]u[h_1 \ \cdots \ h_g]y\\
&=&([x_1 \ \cdots \ x_g]\otimes [h_1 \ \ldots \ h_g])(u\otimes y)
\end{eqnarray*}
it is readily seen that
$$
\psi^\prime\varphi=(u^T\otimes y^T)([x_1 \ \cdots \ x_g]\otimes
[h_1 \ \cdots \ h_g])^T
$$
and hence that $\gZ_{01}=2u(u^T\otimes y^T)$. Conversely, if
$\gZ_{01}=2u(u^T\otimes y^T)$, then
the formula
$$
{\tt{q}}=\frac{1}{2}[x_1 \ \cdots \ x_g]\gZ_{01}([x_1 \ \cdots \ x_g]_2)^T
$$
implies that ${\tt{q}}=\varphi \psi \varphi$, as advertised.
\end{proof}

\begin{lemma}
\label{lem:nov10b6}
If ${\cZ}_{02}=0$ and ${\cZ}_{01}=u_1w_1^T$ with $u_1^Tu_1=1$,
$w_1\in\RR^{g^2}$ and
$w_1\ne 0$, then $w_1=u_1\otimes y_1$ for some nonzero vector $y_1\in\RR^g$,
\begin{equation}
\label{eq:nov13a6}
p_4(x)=0\quad \text{and}\quad p_3(x)=\varphi_1(x)f_1(x)\varphi_1(x)\,,
\end{equation}
where
$$
f_1(x)=\frac{1}{2}[x_1\ \cdots\ x_g]y_1\quad\text{and}\quad
\varphi_1(x)=[x_1 \ \cdots \ x_g]u_1\,.
$$
\end{lemma}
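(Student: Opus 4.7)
The plan is to read off $p_3$ and $p_4$ directly from the entries $\cZ_{02}$ and $\cZ_{01}$ of the scalar middle matrix using formula (\ref{eq:nov10b6}), and then apply Berkovich's factoring lemma to the symmetric polynomial $p_3$.

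First, since $\cZ_{02}=0$, formula (\ref{eq:nov10b6}) with $(i,j)=(0,2)$ immediately gives
\[
p_4(x)=\tfrac12 [x_1\cdots x_g]\,\cZ_{02}\,([x_1\cdots x_g]_3)^T=0.
\]
Next, writing $\cZ_{01}=u_1 w_1^T$ and using formula (\ref{eq:nov10b6}) with $(i,j)=(0,1)$, I would obtain
\[
p_3(x)=\tfrac12 [x_1\cdots x_g]\,u_1 w_1^T\,([x_1\cdots x_g]_2)^T=\varphi_1(x)\,f(x),
\]
where $\varphi_1(x)=[x_1\cdots x_g]u_1$ and $f(x)=\tfrac12 w_1^T ([x_1\cdots x_g]_2)^T$ is homogeneous of degree two. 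Note $f(0)=0$ and, since $w_1\ne 0$, $f\ne 0$.

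Symmetry of $p_3$ translates to $\varphi_1(x)f(x)=f(x)^T\varphi_1(x)$, which is precisely the hypothesis of Lemma \ref{lem:berk} with $\varphi=\psi=\varphi_1$, $f_1=f$, $f_2=f^T$. Berkovich's lemma therefore produces a polynomial $f_1$ with $f=f_1\varphi_1$ and $f^T=\varphi_1 f_1$, so that
\[
p_3(x)=\varphi_1(x)\,f_1(x)\,\varphi_1(x).
\]
Comparing degrees, $f_1$ is homogeneous of degree one, hence $f_1(x)=\tfrac12[x_1\cdots x_g]y_1$ for some $y_1\in\RR^g$, with the factor $\tfrac12$ chosen to simplify the coefficient match below.

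The last step is to identify $w_1$. Using identity (5) of Theorem \ref{thm:ids}, I would rewrite
\[
f_1(x)\varphi_1(x)=\tfrac12 (u_1^T\otimes y_1^T)\,([x_1\cdots x_g]_2)^T,
\]
and compare with $f(x)=\tfrac12 w_1^T([x_1\cdots x_g]_2)^T$. Since the entries of $([x_1\cdots x_g]_2)^T$ are linearly independent monomials, this forces $w_1=u_1\otimes y_1$; because $w_1\ne 0$, also $y_1\ne 0$. The only real obstacle is correctly lining up the Kronecker conventions in the final coefficient match; beyond that, the argument is a direct application of formula (\ref{eq:nov10b6}) and Berkovich's lemma.
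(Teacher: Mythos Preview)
Your argument is correct and follows essentially the same route as the paper: read off $p_4$ and $p_3$ from the scalar middle matrix via formula (\ref{eq:nov10b6}), use the symmetry $p_3=p_3^T$ to invoke Berkovich's Lemma~\ref{lem:berk}, and then match Kronecker coefficients to recover $w_1=u_1\otimes y_1$. In fact you supply more detail than the paper's own proof, which simply writes $p_3=\varphi_1 f^T$ and then says ``the rest follows from the fact that $p_3=p_3^T$ and Lemma~\ref{lem:berk}.''
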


\begin{proof}
The formulas ${\cZ}_{01}=u_1w_1^T$  and (\ref{eq:nov10b6}) imply that
$$
p_3(x)=\frac{1}{2}[x_1\ \cdots \ x_g]\cZ_{01}([x_1\ \cdots \ x_g]_2)^T
=\varphi_1(x)f(x)^T,
$$
where
$$
f(x)=\frac{1}{2}([x_1\ \cdots \ x_g]_2)w_1.
$$
The rest follows from the fact that $p_3=p_3^T$ and Lemma \ref{lem:berk}.
\end{proof}

\subsection{Proof of Theorem \ref{thm:mainZ}}
\label{subsec:oct12a8}
If $\mu_-(\cZ)=1$ and $\textup{rank} \cZ_{02}=1$, then Lemmas
\ref{lem:QZ}--\ref{lem:UAU} guarantee that $\cZ$ is of the form
specified in part I of Theorem \ref{thm:mainZ} and that $E_1\succeq 0$.
 If $\mu_-(\cZ)=1$,
$\cZ_{02}=0$ and $\textup{rank} \cZ_{01}=1$, then Lemma \ref{lem:aug12b8}
guarantees that $\cZ$ is still of the form specified in part I of
Theorem \ref{thm:mainZ}, but
with $A=0$ and $v=0$ and that $P\cZ_{00}P\succeq 0$. Therefore,
$$
E_1=\begin{bmatrix}P\cZ_{00}P&0\\0&0\end{bmatrix}\succeq 0.
$$
If $\mu_-(\cZ)=1$, $\cZ_{02}=0$ and
$\cZ_{01}=0$, then there exists a unit vector $u\in\RR^g$ such that
$\cZ_{00}u=\lambda u$ with $\lambda <0$ and, if $P=I_g-uu^T$ for this
choice of $u$, then $P\cZ_{00}P\succeq 0$. If $\mu_-(\cZ)=0$, then
$\cZ_{02}=0$, $\cZ_{01}=0$ and $\cZ_{00}\succeq 0$.

Conversely, if $\cZ$ is of the form specified in Theorem \ref{thm:mainZ}, and
$E_1\succeq 0$, then, in view of Lemma \ref{lem:QZ} and (6) of
Lemma \ref{lem:UAU}, $\mu_-(\cZ)=1$ if $w_A\ne 0$. If $w_A=0$, then
$A=0$ and the constraint $E_1\succeq 0$ implies that $\textup{mat}_gv=0$
(see, e.g., Lemma 12.19 in\cite{dymbk}) and hence that $v=0$. Thus,
$\cZ_{01}=u(u^T\otimes y^T)$. If $y\ne 0$, then $\textup{rank}\cZ_{01}=1$
and $\mu_-(\cZ)=1$. If $y=0$, then $\cZ_{01}=0$ and
$\mu_-(\cZ)=\mu_-(\cZ)\le 1$, since $P\cZ_{00}P\succeq 0$.

\subsection{Proof of Theorem \ref{thm:mainp}}
\label{subsec:oct12b8}
Since $\mu_-({\cZ})=\sigma_-^{min}(p^{\prime\prime})$, the assumption
$\sigma_-^{min}(p^{\prime\prime})\le 1$ guarantees that the scalar middle
matrix of the Hessian $p^{\prime\prime}(x)[h]$ of $p$ has the form
indicated in Theorem $\ref{thm:mainZ}$ and that $E_1\succeq 0$.
Consequently, the homogeneous
components of $p$ of degree $j$ with $j\ge 2$ may be computed from the
entries in $\cZ$ and formula (\ref{eq:nov10b6}). Moreover, by (1) of
Theorem \ref{thm:ids},
\begin{eqnarray*}
p_4(x)&=&\frac{1}{2}[x_1\ \cdots \ x_g]_2\cZ_{11}([x_1\ \cdots \ x_g]_2)^T\\
&=&\frac{1}{2}[x_1\ \cdots \ x_g]_2UAU^T([x_1\ \cdots \ x_g]_2)^T\\
&=&\frac{1}{2}\varphi(x)[x_1\ \cdots \ x_g]A([x_1\ \cdots \ x_g])^T\varphi(x),
\end{eqnarray*}
which implies that $A=2Q(f_0)$. Similarly the formula
$$
p_3(x)=\frac{1}{2}[x_1\ \cdots \ x_g]\cZ_{01}([x_1\ \cdots \ x_g]_2)^T
$$
together with the formulas for $\cZ_{01}$ in (4) of Lemma 4.3 imply that
$$
\varphi(x)q(x)=
\frac{1}{2}[x_1\ \cdots \ x_g]u(\textup{vec} B)^T([x_1\ \cdots \ x_g]_2)^T
$$
and hence (with the help of the transpose of the second formula in (11)
of Theorem \ref{thm:ids}) that
$$
q(x)=\frac{1}{2}(\textup{vec} B)^T([x_1\ \cdots \ x_g]_2)^T
=\frac{1}{2}[x_1\ \cdots \ x_g]B([x_1\ \cdots \ x_g])^T.
$$
Therefore, $B=2Q(q)$. Similarly, the formula
$$
p_2=\frac{1}{2}[x_1\ \cdots \ x_g]\cZ_{00}([x_1\ \cdots \ x_g])^T
$$
implies that $\cZ_{00}=2Q(p_2)$. Thus, in view of (6) of Lemma \ref{lem:UAU},
the matrices $E_2$ in
Theorem \ref{thm:mainp}  and $E_1$ in Theorem \ref{thm:mainZ}
are simply related: $E_1=2E_2$.

If $p$ is of degree three, then $v=0$ by Theorem \ref{thm:mainZ} and hence
the formula for $B$ in (4) of Lemma \ref{lem:UAU} reduces to
$2B=\textup{mat}_g(u\otimes y)=yu^T$. Therefore,
$$
q(x)=\frac{1}{4}[x_1\ \cdots \ x_g]yu^T([x_1\ \cdots \ x_g])^T=f_1(x)\varphi(x)
$$

Conversely, if $p$ is the
form specified in Theorem \ref{thm:mainp}, then Lemmas \ref{lem:jun12a7},
\ref{lem:p3form} and \ref{lem:nov10a6}
 guarantee that
$\cZ$ is of the form specified in Theorem \ref{thm:mainZ} and hence that
$\mu_-({\cZ})=\sigma_-^{min}(p^{\prime\prime})\le 1$.

\subsection{Proof of Theorem \ref{thm:mainZsp}}
\label{subsec:oct12c8}
Theorem \ref{thm:mainZsp} follows from the formulas

$p_4(x)=\frac{1}{2}[x_1\cdots x_g]_2\cZ_{11}([x_1\cdots x_g]_2)^T$,
$p_3(x)=\frac{1}{2}[x_1\cdots x_g]\cZ_{01}([x_1\cdots x_g]_2)^T$ and
$p_2(x)=\frac{1}{2}[x_1\cdots x_g]_2\cZ_{00}([x_1\cdots x_g])^T$,
and appropriate choices of the identities in Theorem \ref{thm:ids}.


\section{Modified Hessians}
\label{sec:iMHintro}

In this section,  we introduce the modified Hessian
$$p^{\prime\prime}_\lambda = p^{\prime\prime}_\lambda(x, h)=
p^{\prime\prime}(x)[h] + \lambda\{p'(x)[h]\}^T \{p'(x) [h]\}$$ of
a symmetric polynomial $p(x) = p(x_1, \dots, x_g)$ in $g$
noncommuting variables. The first order of business is to
establish a representation formula analogous to formula
(\ref{eq:defs-middle}) for the new term $\{ p'(x) [h]\}^T \{p'(x) [h]\}$.

\begin{lemma}
\label{lem:nov6a6}
Let $p_k(x)$ be a symmetric nc polynomial that is homogeneous of degree
$k$ in
the $g$ symmetric variables $x_1,\ldots,x_g$ and suppose that $k\ge 1$. Then
$p_k^{\prime}(x)[h]$ can be expressed uniquely in the form
\begin{equation}
\label{eq:nov6a6}
p_k^{\prime}(x)[h]=\sum_{j=0}^{k-1}\varphi_{kj}(x)V_j(x)[h]\,,
\end{equation}
where $\varphi_{kj}(x)$ is a row polynomial of size $1\times g^{j+1}$
in which the nonzero entries are homogeneous polynomials of degree $k-1-j$.
\end{lemma}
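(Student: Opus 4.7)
The plan is to decompose $p_k'(x)[h]$ according to the position at which the single $h$-variable appears in each of its monomials. Because $p_k$ is homogeneous of degree $k$, differentiation in the direction $h$ produces, from each monomial of $p_k$, exactly $k$ new monomials, each obtained by replacing one of the $k$ occurrences of an $x$-variable by the corresponding $h$-variable. Thus every monomial appearing in $p_k'(x)[h]$ has the form
$$
(\text{word of degree $r-1$ in $x$})\cdot h_\ell\cdot (\text{word of degree $k-r$ in $x$})
$$
for a unique $r \in \{1,\ldots,k\}$ and unique $\ell \in \{1,\ldots,g\}$.

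First I would group the terms of $p_k'(x)[h]$ by the value of $r$, writing
$
p_k'(x)[h]=\sum_{r=1}^{k} q_r(x)[h],
$
where $q_r$ collects all monomials with the $h$-variable in position $r$. Next I would match this grouping to the claimed representation by setting $j=k-r$, so that $j$ ranges from $0$ to $k-1$ as $r$ ranges from $k$ down to $1$. Recall (from (\ref{eq:vjt}) and the discussion following it) that the entries of the column vector $V_j(x)[h]$, of height $g^{j+1}$, are precisely the monomials $h_\ell m(x)$ with $m(x)$ a monomial in $x$ of degree $j$ and $\ell \in \{1,\ldots,g\}$. Therefore a product $\varphi_{kj}(x)V_j(x)[h]$ in which $\varphi_{kj}$ is a row of size $1\times g^{j+1}$ with each nonzero entry a homogeneous polynomial of degree $k-1-j = r-1$, realizes an arbitrary $\mathbb{R}$-linear combination of monomials of the shape (degree-$(r-1)$ word in $x$)$\cdot h_\ell\cdot$(degree-$j$ monomial in $x$), which is exactly the shape of $q_r(x)[h]$.

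Hence for each $r$ one can read off the entries of $\varphi_{k,k-r}(x)$ from the coefficients of $q_r(x)[h]$ expressed in the monomial basis. Existence follows from this matching, and uniqueness is then immediate from the linear independence of the distinct monomials of the form (word)$\cdot h_\ell\cdot$(word) in $\mathbb{R}\langle x,h\rangle$, which also implies that different values of $r$ (equivalently, different values of $j$) contribute to linearly independent subspaces of $\mathbb{R}\langle x,h\rangle$. The argument is essentially bookkeeping; the only mild point to track is aligning the ordering of the entries of $\varphi_{kj}(x)$ with the ordering of the entries of $V_j(x)[h]$ fixed by (\ref{eq:vjt}), but no serious obstacle arises.
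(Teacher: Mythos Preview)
Your argument is correct. It differs from the paper's proof mainly in style: the paper writes $p_k(x)=u_k^T[x_1,\ldots,x_g]_k^T$ for a coefficient vector $u_k\in\mathbb R^{g^k}$, expands the derivative as a sum of Kronecker products with one $h$-factor in each possible slot, and then uses the permutation matrices $\Pi_j$ from (\ref{eq:vja}) together with the identity (\ref{eq:oct1a}) to produce explicit formulas $\varphi_{k,k-1-i}(x)=[x_1,\ldots,x_g]_iA_i\Pi_{k-1-i}$ for suitable constant matrices $A_i$. Your approach bypasses this machinery by grouping monomials of $p_k'(x)[h]$ according to the position of the single $h$-letter and observing that the entries of $V_j$ are exactly the words $h_\ell m(x)$ with $|m|=j$. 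What you gain is a shorter, coordinate-free argument that also makes uniqueness immediate from linear independence of distinct words in $\mathbb R\langle x,h\rangle$; what the paper's route gains is explicit closed formulas for the $\varphi_{kj}$ in terms of $u_k$ and the $\Pi_j$, which fits the Kronecker bookkeeping used elsewhere in the paper. Neither proof actually uses the symmetry hypothesis on $p_k$.
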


\begin{proof}
The polynomial $p_k(x)$ can be expressed in the form
$$
  p_k(x)=u_k^T\begin{bmatrix}x_1\\ \vdots\\x_g\end{bmatrix}_k
$$
 for some vector
$u_k\in\mathbb{R}^{g^k}$. Therefore,
\begin{eqnarray*}
p_k^{\prime}(x)[h]&=&u_k^T\left\{\begin{bmatrix}h_1\\ \vdots\\h_g\end{bmatrix}
\otimes\begin{bmatrix}x_1\\ \vdots\\x_g\end{bmatrix}_{k-1}+
\begin{bmatrix}x_1\\ \vdots\\x_g\end{bmatrix}\otimes
\begin{bmatrix}h_1\\ \vdots\\h_g\end{bmatrix}\otimes
\begin{bmatrix}x_1\\ \vdots\\x_g\end{bmatrix}_{k-2}+\cdots\right.\\
&{}&\left.\qquad\qquad +
\begin{bmatrix}x_1\\ \vdots\\x_g\end{bmatrix}_{k-1}\otimes
\begin{bmatrix}h_1\\ \vdots\\h_g\end{bmatrix}\right\}\\
&=&u_k^T\left\{\Pi_{k-1}V_{k-1}+\sum_{i=1}^{k-1}
\begin{bmatrix}x_1\\ \vdots\\x_g\end{bmatrix}_i
\otimes\Pi_{k-1-i}V_{k-1-i}\right\},
\end{eqnarray*}
where the $\Pi_j$ denote the permutations defined by formula (\ref{eq:vja}) for
$j=1,\ldots,k-1$ and $\Pi_0=I_g$. But the last formula for
$p_k^{\prime}(x)[h]$
can be rewritten in the form (\ref{eq:nov6a6}) by noting that
$$
u_k^T\left\{\begin{bmatrix}x_1\\ \vdots\\x_g\end{bmatrix}_i\otimes
\Pi_{k-1-i}V_{k-1-i}\right\}=
[x_1,\ldots,x_g]_iA_i\Pi_{k-1-i}V_{k-1-i}
$$
for a suitably defined matrix $A_i\in\mathbb{R}^{g^i\times g^{k-i}}$ and then
setting
$$
\varphi_{k,k-1-i}(x)=\left\{\begin{array}{ll}u_k^T
[x_1,\ldots,x_g]_iA_i\Pi_{k-1-i}&\quad\text{for}\ i=1,\ldots,k-1\\ \\
u_k^T\Pi_{k-1}&\quad\text{for}\ i=0.\end{array}\right.
$$
\end{proof}

\begin{lemma}\label{lem:3}
If $p(x)$ is a symmetric nc polynomial
 of degree $d$ in $g$ symmetric
variables,
    then
\begin{eqnarray}\label{eq:rep2}
\lefteqn{\{p'(x) [h]\}^T\{p'(x)[h]\}=}\nonumber\\
&&[V^T_0, V^T_1,  \ldots,  V^T_k]\ \left[
\begin{array}{cccc}
W_{00}& W_{01} & \cdots & W_{0k}\\
W_{10}& W_{11} & \cdots & W_{1k}\\
\vdots& \vdots & &\vdots\\
W_{k0}& W_{k1} & \cdots & W_{kk}
\end{array}
\right]\ \left[\begin{array}{c} V_0\\ \vdots\\ V_k
\end{array}\right],
\end{eqnarray}
where:
\begin{enumerate}
\item[\rm(1)]  $k = d -1$.

\item[\rm(2)]  The vectors $V_j=V_j(x)[h]$ in formula (\ref{eq:rep2})
are given by formula (\ref{eq:vjt}) for $j=0, \dots, d-1$.

\item[\rm(3)]  $W_{ij}$ is a matrix of size $g^{i+1} \times g^{j+1}$
and the entries in $W_{ij}$ are polynomials in the noncommuting
variables $x_1, \dots, x_g$ of degree $\le 2(d-1) - (i+j)$.

\item[\rm(4)] $W^T_{ij} = W_{ji}$.

\item[\rm(5)] The matrix $W=W(x)$ admits the factorization
$$
\left[\begin{array}{ccc}
W_{00} & \cdots& W_{0k}\\
\vdots&&\vdots\\
W_{k0} &\cdots&W_{kk}\end{array}\right] =
\left[\begin{array}{c} \psi_0\\ \vdots\\ \psi_k
\end{array}\right]
[\psi^T_0 \cdots \psi^T_k].
$$
\end{enumerate}
\end{lemma}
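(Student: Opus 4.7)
The plan is to build the representation (\ref{eq:rep2}) directly from the representation of $p'(x)[h]$ already established in Lemma \ref{lem:nov6a6}. Since $p$ has degree $d$, write $p=\sum_{k=0}^d p_k$ as a sum of homogeneous parts. Applying Lemma \ref{lem:nov6a6} to each $p_k$ with $k\ge 1$ gives
$$
p'(x)[h] \;=\; \sum_{k=1}^{d} \sum_{j=0}^{k-1} \varphi_{kj}(x)\,V_j(x)[h],
$$
where each $\varphi_{kj}(x)$ is a $1\times g^{j+1}$ row polynomial whose nonzero entries are homogeneous of degree $k-1-j$. Reindexing by collecting all terms multiplying a given $V_j$, define
$$
\psi_j(x) \;:=\; \sum_{k=j+1}^{d} \varphi_{kj}(x), \qquad j=0,1,\dots,d-1.
$$
Then $\psi_j(x)$ is a $1\times g^{j+1}$ row polynomial whose entries have degree at most $d-1-j$, and
$$
p'(x)[h] \;=\; \sum_{j=0}^{d-1} \psi_j(x)\,V_j(x)[h] \;=\; [\psi_0\,\cdots\,\psi_{d-1}]\,\mathrm{col}(V_0,\dots,V_{d-1}).
$$

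Taking the transpose and multiplying, I obtain
$$
\{p'(x)[h]\}^T\{p'(x)[h]\} \;=\; \sum_{i,j=0}^{d-1} V_i^T\,\psi_i(x)^T\psi_j(x)\,V_j,
$$
so the required representation (\ref{eq:rep2}) holds with $k=d-1$ and
$$
W_{ij}(x) \;:=\; \psi_i(x)^T\,\psi_j(x).
$$
This establishes (1) and (2) at once, and the outer-product factorization (5) is visible by construction. The block $W_{ij}$ has size $(g^{i+1})\times(g^{j+1})$, giving (3) on the size count; for the degree bound in (3), each entry of $\psi_i^T$ has degree at most $d-1-i$ and each entry of $\psi_j$ has degree at most $d-1-j$, so each entry of $W_{ij}$ has degree at most $2(d-1)-(i+j)$. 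The symmetry (4) is immediate:
$$
W_{ij}^T \;=\; \bigl(\psi_i^T\psi_j\bigr)^T \;=\; \psi_j^T\psi_i \;=\; W_{ji}.
$$

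There is no serious obstacle here; the lemma is essentially a bookkeeping consequence of Lemma \ref{lem:nov6a6}. The only point requiring a little care is verifying that the degree tally $(d-1-i)+(d-1-j)$ is sharp and that no cross-term between distinct homogeneous components $p_k$ and $p_{k'}$ disturbs the pattern — but this is automatic, since collecting like powers of $V_j$ and then forming the outer product $\psi^T\psi$ preserves the degree bound on each factor.
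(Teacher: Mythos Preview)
Your proof is correct and follows essentially the same approach as the paper: decompose $p$ into homogeneous components, apply Lemma~\ref{lem:nov6a6} to each, collect the coefficients of $V_j$ into $\psi_j$, and read off the outer-product factorization $W=\psi\psi^T$. The only cosmetic discrepancy is that you treat $\psi_j$ as a row vector (so $W_{ij}=\psi_i^T\psi_j$), whereas in the statement and in the paper's proof $\psi_j$ is a column vector of size $g^{j+1}\times 1$ (so $W_{ij}=\psi_i\psi_j^T$); this is purely notational and does not affect the argument.
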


\begin{proof}
Items (1)--(4) are straightforward; (5) is discussed next.

If $p(x)$ is a polynomial of degree $d$ in $g$ noncommuting
variables, then
$p=\sum_{j=0}^dp_j(x)$, where $p_j$ is either equal to a homogeneous polynomial of degree $j$ or to zero and $p_d$ is not zero. Therefore,
\begin{equation}
\label{eq:N1}
p^{\prime}(x)[h]=\sum_{j=1}^dp^{\prime}_j(x)[h]=
\sum\limits_{s=0}^{d-1}\psi_s(x)^T\;V_s(x)[h]\,,
\end{equation}
follows by applying Lemma \ref{lem:nov6a6}
to each of the terms $p_j(x)$ in the sum.
Thus,
$p^{\prime}(x)=p^{\prime}(x)[h]$
can be expressed in terms of the border vectors $V_s=V_s(x)[h]$
and a unique choice of vector polynomials $\psi_s=\psi_s(x)$
of size $g^{s+1}\times 1$ and degree $d-1-s$ by the indicated formula.
Consequently, the entries $W_{ij}$ in the representation formula
(\ref{eq:rep2}) can now be written in terms of the vector polynomials
$\psi_0,\ldots,\psi_k$ as
\begin{equation}
\label{eq:N1a}
W_{ij}(x)=\psi_i(x)\psi_j^T(x)\quad\textrm{for}\quad i,j\le d-1
\end{equation}
and hence the full matrix
\begin{equation}
\label{eq:N1b} W(x)=\left[\begin{array}{c}\psi_0(x)\\ \vdots \\
\psi_k(x)\end{array}
\right][\psi_0^T(x),\ldots,\psi_k^T(x)]\,,\quad\textrm{where}\quad
k=d-1\,.
\end{equation}
\end{proof}

 The {\bf middle matrix for the modified Hessian} of a symmetric  polynomial
 of degree $d$ is the $(d-1)\times (d-1)$ block matrix $Z_\lambda$ (with
  polynomial entries)
$$
 Z_\lambda = \begin{bmatrix} Z & 0\\ 0 & 0\end{bmatrix} + \lambda W,
$$
 where $Z$ is the middle matrix for $p^{\prime\prime}$.
 Thus, as $\widetilde{V}(x)[h]$
 denotes the border vector of height $g\widetilde{\nu}$ (which includes
monomials in
 $x$ of degree $d-1$), the modified Hessian can be represented as
$$
  p_{\lambda}^{\prime\prime}(x)[h]= \widetilde{V}(x)[h]^T  Z_\lambda(x)
\widetilde{V}(x)[h].
$$
  The {\bf scalar middle matrix of the modified Hessian} is the
  matrix
\begin{equation}
 \label{eq:defscalarmidmod}
  \cZ_\lambda =
    \begin{bmatrix} \cZ & 0\\ 0 & \lambda\psi_{d-1}(0)\psi_{d-1}(0)^T
\end{bmatrix}.
\end{equation}
 Note that this differs a bit from the earlier terminology since
 $Z_\lambda(0)\ne \cZ_\lambda$. The key point is that
  $\cZ_\lambda$ is constant and is polynomially congruent
 to $Z_\lambda(x)$ via a congruence which does not depend upon $\lambda$.

\begin{theorem}
\label{thm:nov7a6}
Let $p(x)$ be a symmetric nc polynomial of degree $d\ge 1$ in $g$
symmetric  variables
and let $\psi_j(x)$ denote the coefficients of $p'(x)[h]$
in formula {\rm (\ref{eq:N1})}.
Then
$$
\psi_{d-1}(x)=\psi_{d-1}(0)\ne 0
$$
and there exists a block $d\times d$ matrix-valued  polynomial
$S$ with polynomial inverse so that
$$
  Z_\lambda(x)=S(x)^T \cZ_\lambda S(x).
$$
In particular,  $S(X)$ is invertible when $X\in \gtupn$, and
$$
 Z_\lambda(X) = S(X)^T (\cZ_\lambda \otimes I_n) S(X).
$$
\end{theorem}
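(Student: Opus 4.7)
The plan has three main steps: establish that $\psi_{d-1}(x)$ is a nonzero constant vector; construct the congruence matrix $S(x)$ by combining the known polynomial congruence for the Hessian with a correction that absorbs the modified term; and evaluate at tuples of matrices.

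For the first step, decompose $p = \sum_{k=0}^{d} p_k$ into homogeneous components and apply Lemma \ref{lem:nov6a6} to each $p_k$ with $k \geq 1$: the coefficient of $V_j(x)[h]$ in $p_k^\prime(x)[h]$ is a row polynomial whose entries are homogeneous of degree $k-1-j$. Interchanging sums, the total coefficient $\psi_j(x)^T$ of $V_j(x)[h]$ in $p^\prime(x)[h]$ equals a sum of such $\varphi_{kj}(x)$ over $k\ge j+1$. Taking $j=d-1$, only $k = d$ contributes, and its entries have degree $0$, so $\psi_{d-1}(x) = \psi_{d-1}(0)$ is constant. The explicit formula $\varphi_{d,d-1}(x) = u_d^T \Pi_{d-1}$ from the proof of Lemma \ref{lem:nov6a6}, together with $u_d \ne 0$ (because $p_d \ne 0$) and $\Pi_{d-1}$ being a permutation matrix, yields $\psi_{d-1}(0) \ne 0$.

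For the second step, let $\Psi(x) = \textup{col}(\psi_0(x), \ldots, \psi_{d-1}(x))$, split $\Psi(x) = \textup{col}(\Psi_\ell(x), \psi_{d-1}(0))$ with $\Psi_\ell(x) = \textup{col}(\psi_0(x), \ldots, \psi_{d-2}(x))$, and recall $W(x) = \Psi(x)\Psi(x)^T$ from Lemma \ref{lem:3}(5). By (\ref{eq:may13a7}) from \cite{DHMjda}, there is a polynomial matrix $B(x)$ with polynomial inverse such that $Z(x) = B(x)^T \cZ B(x)$. Setting
\[
S(x) = \begin{bmatrix} B(x) & 0 \\ C(x) & I \end{bmatrix}, \qquad C(x) = \frac{\psi_{d-1}(0)\,\Psi_\ell(x)^T}{\psi_{d-1}(0)^T \psi_{d-1}(0)},
\]
a direct $2\times 2$ block computation using $\psi_{d-1}(0)^T C(x) = \Psi_\ell(x)^T$ (and hence $C(x)^T \psi_{d-1}(0) \psi_{d-1}(0)^T C(x) = \Psi_\ell(x) \Psi_\ell(x)^T$) together with $B(x)^T \cZ B(x) = Z(x)$ verifies the four block identities $S(x)^T \cZ_\lambda S(x) = Z_\lambda(x)$. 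The matrix $S(x)$ has polynomial inverse $\begin{bmatrix} B(x)^{-1} & 0 \\ -C(x)B(x)^{-1} & I \end{bmatrix}$, and by construction neither $S(x)$ nor its inverse depends on $\lambda$.

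For the third step, evaluation at $X \in \gtupn$ is automatic from the substitution conventions: scalar constants $c$ in $\cZ_\lambda$ become $c\,I_n$, so $\cZ_\lambda$ as a whole evaluates to $\cZ_\lambda \otimes I_n$ under the appropriate block identification, while polynomial matrix products commute with evaluation; $S(X)$ inherits invertibility from the polynomial inverse of $S(x)$. The only delicate point in the whole argument is recognizing that $\psi_{d-1}$ is constant; once that observation is in hand, the off-diagonal block $C(x)$ is essentially forced by the requirement of matching the off-diagonal blocks of $W$, the congruence verification reduces to the two identities noted above, and the $\lambda$-independence of $S$ falls out automatically.
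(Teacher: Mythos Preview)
Your proof is correct and follows essentially the same approach as the paper. You are in fact more explicit than the paper about why $\psi_{d-1}$ is constant and nonzero, and you write the off-diagonal block $C(x)=\psi_{d-1}(0)\Psi_\ell(x)^T/\|\psi_{d-1}(0)\|^2$ directly rather than via the Moore--Penrose inverse $(yy^T)^\dagger y\tpsi^T$; these expressions coincide, and the resulting $S(x)$ and its polynomial inverse match the paper's formula (\ref{eq:may13b7}).
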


\begin{proof}
It is convenient to let $y=\psi_{d-1}(x)$ and
$\tpsi^T :=[\psi_0^T,\ldots,\psi_{d-2}^T]$.
Then, as $y\in\RR^{g^d}$,
\begin{eqnarray}
\\
Z_\lambda(x) &=&
\begin{bmatrix} Z(x) & 0 \\ 0 & 0 \end{bmatrix}
+ \lambda \begin{bmatrix} \tpsi \tpsi^T & \tpsi y^T \\
y \tpsi^T & yy^T
\end{bmatrix}
=\begin{bmatrix} Z(x) + \lambda \tpsi \tpsi^T &
 \lambda \tpsi y^T \\
\lambda y \tpsi^T & \lambda yy^T
\end{bmatrix} \nonumber\\
&=&
\begin{bmatrix} I & \lambda \tpsi y^T (\lambda
yy^T)^{\dagger} \\ 0 & I \end{bmatrix}
\begin{bmatrix} Z(x) & 0 \\ 0 & \lambda y y^T \end{bmatrix}
\begin{bmatrix} I & 0 \\
(\lambda yy^T)^{\dagger} \lambda y \tpsi^T
 & I \end{bmatrix}\nonumber\\
&=&
\begin{bmatrix} I &  \tpsi y^T (
yy^T)^{\dagger} \\ 0 & I \end{bmatrix}
\begin{bmatrix} Z(x) & 0 \\ 0 &  \lambda y y^T \end{bmatrix}
\begin{bmatrix} I & 0 \\
( yy^T)^{\dagger}  y \tpsi^T
 & I \end{bmatrix}\,,\nonumber
\end{eqnarray}
since the Moore-Penrose inverse $(\lambda yy^T)^{\dagger}$
of $\lambda yy^T$ is given by the formula
$$(\lambda y y^T)^{\dagger}
=
 \frac{y(y^T y)^{-2} y^T}{\lambda}$$
when $\lambda\ne 0$, and
$$y^T (\lambda y y^T)^{\dagger} (\lambda y y^T)
 =
 y^T (\lambda y y^T)(\lambda y y^T)^{\dagger}
 =
 y^T\,.$$
Thus, upon setting $C(x)=(yy^T)^\dagger y\tpsi^T(x)$ and invoking formula
(\ref{eq:may13a7}), it follows that
$$
{Z}_\lambda(x)=S(x)^T{\cZ}_\lambda S(x)\,,
$$
where
\begin{equation}
\label{eq:may13b7}
  S(x)=\begin{bmatrix} B(x) & 0\\ C(x) & I \end{bmatrix}\quad\textrm{and}\quad
S(x)^{-1}=\begin{bmatrix} B(x)^{-1} & 0\\ -C(x)B(x)^{-1} & I \end{bmatrix}
\end{equation}
are both polynomial matrices.
It is important to note that $S(x)$
does not depend on $\lambda$.
\end{proof}

\begin{cor}
If $\lambda>0$, then, in the setting of Theorem \ref{thm:nov7a6},
\begin{enumerate}
\item[\rm(1)] $\mu_+({\cZ}_{\lambda})=\mu_+({\cZ})+1$ and
$\mu_-({\cZ}_{\lambda})=\mu_-({\cZ})$.
\item[\rm(2)] ${\cZ}_{\lambda}\succeq 0\Longleftrightarrow {\cZ}\succeq 0$.
\item[\rm(3)] ${\cZ}_{\lambda}\succeq 0\Longrightarrow d\le 2$ (thanks to
(\ref{eq:nov10a6}).
\end{enumerate}
\end{cor}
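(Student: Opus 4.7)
The plan is to read off the three claims directly from the explicit block diagonal form of $\cZ_\lambda$ given in formula (\ref{eq:defscalarmidmod}), using two ingredients supplied by Theorem \ref{thm:nov7a6}: first, that $\psi_{d-1}(0)\neq 0$, and second, that $Z_\lambda(x)$ is polynomially congruent to $\cZ_\lambda$, so the inertias of $Z_\lambda(X)$ (up to the factor $n$ coming from tensoring with $I_n$) are governed by those of $\cZ_\lambda$. Since $\cZ_\lambda$ is literally block diagonal with blocks $\cZ$ and $\lambda\,\psi_{d-1}(0)\psi_{d-1}(0)^T$, its positive and negative inertias split as
\[
\mu_\pm(\cZ_\lambda)=\mu_\pm(\cZ)+\mu_\pm\!\bigl(\lambda\,\psi_{d-1}(0)\psi_{d-1}(0)^T\bigr).
\]

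For (1), the second summand is a real scalar multiple of a nonzero rank-one positive semidefinite matrix; since $\lambda>0$ and $\psi_{d-1}(0)\ne 0$, it has $\mu_+=1$ and $\mu_-=0$. This immediately yields $\mu_+(\cZ_\lambda)=\mu_+(\cZ)+1$ and $\mu_-(\cZ_\lambda)=\mu_-(\cZ)$. For (2), I observe that a symmetric matrix is positive semidefinite iff its negative inertia vanishes; applying this to both $\cZ_\lambda$ and $\cZ$ and invoking the identity $\mu_-(\cZ_\lambda)=\mu_-(\cZ)$ from (1) gives the equivalence. For (3), if $\cZ_\lambda\succeq 0$, then by (2), $\cZ\succeq 0$, so $\mu_-(\cZ)=0$; plugging this into the bound (\ref{eq:nov10a6}) yields $d\le 2\mu_-(\cZ)+2=2$.

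There is no serious obstacle here: all of the real work has already been done in Theorem \ref{thm:nov7a6}, which guarantees both the nonvanishing of the top-order coefficient vector $\psi_{d-1}(0)$ and the $\lambda$-independent polynomial congruence $Z_\lambda(x)=S(x)^T\cZ_\lambda S(x)$. The corollary is then a three-line inertia computation on a block diagonal matrix. The one small point worth flagging in writing the proof is that one should explicitly note $\psi_{d-1}(0)\ne 0$ when asserting $\mu_+(\lambda\psi_{d-1}(0)\psi_{d-1}(0)^T)=1$, since otherwise the rank-one block could vanish and (1) would fail.
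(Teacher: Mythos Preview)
Your argument is correct and matches the paper's intended approach: the corollary is stated without proof immediately after Theorem \ref{thm:nov7a6}, precisely because it is the straightforward inertia computation on the block diagonal matrix $\cZ_\lambda$ that you describe. One minor remark: the polynomial congruence $Z_\lambda(x)=S(x)^T\cZ_\lambda S(x)$ is not actually needed for the corollary as stated, since all three claims concern the constant matrix $\cZ_\lambda$ itself; you only need the block form (\ref{eq:defscalarmidmod}) and the nonvanishing of $\psi_{d-1}(0)$, so you can safely drop that ingredient from the write-up.
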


\section{The Relaxed Hessian: Local Positivity}
 \label{sec:relax}
 In this section we exploit the structure of the
 polynomial congruence of Theorem \ref{thm:absModHess}
 to prove Theorems \ref{thm:local} and  \ref{thm:localq}.
 We then  establish
 Theorems \ref{thm:regHess-k} and \ref{thm:regHessq-k}.

 \subsection{Proof of Theorem \ref{thm:local}}
\label{sec:pfabsModHess}
 The starting point is the polynomial congruence formula (\ref{eq:may13b7})
of Theorem \ref{thm:nov7a6}.
Again, we emphasize that, whereas $\cZ=Z(0)$ for $0\in \mathbb R^g$,
 it is not the case that  $\cZ_\lambda = Z_\lambda(0)$.

It is convenient to let $F=(S^T)^{-1}S^{-1}$, and to bear in mind that
$\cZ_\lambda(X)=\cZ_\lambda\otimes I_n$ for $X\in\gtupn$,
since $\cZ_\lambda$ is constant.

If $X\in\gtupn$ and  $\delta>0$, then
$$
Z_\lambda(X) +\delta I
   = S(X)^T (\cZ\otimes I_n+\delta F(X)) S(X)\,.
$$
 Thus, for $H\in\gtupn$ and $v\in\mathbb R^n$, we have
\begin{equation}
\label{eq:local-Y}
  \left\langle \left
    ( p^{\prime\prime}_\lambda(X)[H] +\delta \tV(X)[H]^T \tV(X)[H]\right ) v,v
    \right\rangle
\end{equation}
$$
   = \langle ( ( \cZ_\lambda\otimes I_n
   + \delta F(X)) S(X)\tV(X)[H] ) v, S(X) \tV(X)[H]v \rangle.
$$

 The hypothesis that $\{\tV(X)[H]v: H\in\gtupn \}$ has codimension
 at most $n-1$ in $\mathbb R^{ng\widetilde{\nu}}$ and the invertibility of
$S(X)$
 implies  that $\{S(X)\tV(X)[H]v: H\in\gtupn\}$ also has codimension
 at most $n-1$.
 Consequently, as the relaxed Hessian is positive at $(X, v)$
under the hypothesis of Theorem \ref{thm:local},
 we can select $\lambda$ such that the left side of  formula
(\ref{eq:local-Y})
 is nonnegative for each $H\in\gtupn$, in which case
 $\cZ_\lambda\otimes I_n  +\delta F(X)$ has at
 most $n-1$ negative eigenvalues.
It follows that $\cZ \otimes I_n + \delta {\widetilde F}(X),$ the
upper left hand (block) corner of
$\cZ_\lambda\otimes I_n   + \delta F(X),$
has at most $n-1$ negative eigenvalues for each $\delta>0$.
Since this upper left hand corner does not depend upon $\lambda$
and $\delta>0$ is arbitrary,
 it follows that $\cZ \otimes I_n$ has at most $n-1$ negative eigenvalues.
 Therefore, by  (\ref{eq:apr30a7}),
  $\mu_-(\cZ)=0$. Thus, $p$ has degree at most two
  by (\ref{eq:nov10a6}).

\subsection{Proof of Theorem \ref{thm:localq}}
\label{sec:pflocalq}
 To see that Theorem \ref{thm:localq} follows from
 Theorem \ref{thm:local} it
suffices to show that, under the hypothesis on $(X,v)$
 in Theorem \ref{thm:localq},  the
 subspace $\{\tV(X)[H]v: H\in \gtupn\}$  has codimension
 at most $n-1$ in $\mathbb R^{ng\widetilde{\nu}}.$
 For this, and subsequent proofs, we will invoke an
 estimate  that is extracted from Lemmas 9.5 and 9.7 in \cite{CHSY03},
which are rephrased below as Lemma
\ref{lem:CHSY} for the convenience of the reader.

\begin{lemma}[CHSY Lemma]
 \label{lem:CHSY}
  Given a pair of positive integers $g$ and $r$, a matrix $X\in\gtupn$
  and a vector $v\in \mathbb R^{n}$, let
$$
\alpha_k=\sum_{j=0}^kg^j\,, \quad
\mathcal{R}_k=\left\{\begin{bmatrix}V_0(X)[H]v\\
\vdots\\V_k(X)[H]v\end{bmatrix}:\, H\in\gtupn\right\}\,,
$$
and suppose that the set $\{m(X)v: |m|\le r\}$ is a linearly independent
subset
  of $\mathbb R^n$. Then $\mathcal{R}_k$ is a subspace of $\RR^{ng\alpha_k}$
and its codimension
\begin{equation}
\label{eq:aug22a7}
\textup{codim}\,\mathcal{R}_k\le ng(\alpha_k-\alpha_r)+g\alpha_r
\frac{\alpha_r-1}{2}
\quad\text{if}\quad k\ge r\,,
\end{equation}
with equality if $k=r$. (Recall that $\vert m\vert$ denotes the length of the
monomial $m$.)
\end{lemma}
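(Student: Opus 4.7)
My plan is to exploit the fact that each block entry of the stacked border vector $\textup{col}(V_0(X)[H]v,\ldots,V_k(X)[H]v)\in\mathbb R^{ng\alpha_k}$ is linear in exactly one of the symmetric matrices $H_1,\ldots,H_g$. Inspecting the definition (\ref{eq:vjt}) together with the entrywise involution that turns $V_j^T$ into $V_j$, the $g^{j+1}$ block entries of $V_j(X)[H]v\in(\mathbb R^n)^{g^{j+1}}$ are, up to reordering, the vectors $H_\ell\, m(X)v\in\mathbb R^n$ as $\ell$ ranges over $\{1,\ldots,g\}$ and $m$ over the monomials of degree $j$. Consequently, after a permutation of coordinates, the linear map
\begin{equation*}
\Phi\colon\gtupn\longrightarrow \mathbb R^{ng\alpha_k},\qquad H\longmapsto\textup{col}\bigl(V_0(X)[H]v,\ldots,V_k(X)[H]v\bigr)
\end{equation*}
decomposes as an orthogonal direct sum of $g$ identical copies of the single-matrix map
\begin{equation*}
\phi\colon \mathbb R^{n\times n}_{sym}\longrightarrow \mathbb R^{n\alpha_k},\qquad H\longmapsto\bigl(H\,m(X)v\bigr)_{|m|\le k}.
\end{equation*}
In particular, $\mathcal R_k=\textup{range}\,\Phi$ is a subspace and $\textup{codim}\,\mathcal R_k= g\cdot\textup{codim}(\textup{range}\,\phi)$.

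Next I would compute $\textup{codim}(\textup{range}\,\phi)$ by rank-nullity. Let $S_k=\textup{span}\{m(X)v:|m|\le k\}\subseteq\mathbb R^n$ and $s_k=\dim S_k$. A symmetric $H$ lies in $\ker\phi$ iff $S_k\subseteq\ker H$; since $H=H^T$, this is equivalent to $H$ vanishing on $S_k$ and acting as an arbitrary symmetric operator on the orthogonal complement $S_k^\perp$. Hence $\dim\ker\phi=\binom{n-s_k+1}{2}$, and so
\begin{equation*}
\dim\textup{range}\,\phi\;=\;\binom{n+1}{2}-\binom{n-s_k+1}{2}\;=\;ns_k-\binom{s_k}{2}.
\end{equation*}
The linear independence hypothesis says $S_k$ contains the $\alpha_r$ independent vectors $\{m(X)v:|m|\le r\}$, so $\alpha_r\le s_k\le n$. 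Since $s\mapsto ns-\binom{s}{2}$ is strictly increasing on $\{0,1,\ldots,n\}$, we obtain
\begin{equation*}
\textup{codim}(\textup{range}\,\phi)\;=\;n\alpha_k-\Bigl(ns_k-\binom{s_k}{2}\Bigr)\;\le\; n(\alpha_k-\alpha_r)+\frac{\alpha_r(\alpha_r-1)}{2},
\end{equation*}
and multiplying by $g$ produces (\ref{eq:aug22a7}). When $k=r$ the hypothesis forces $s_r=\alpha_r$ exactly, so every inequality above is an equality and the bound is attained.

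The only conceptually nontrivial step is the splitting of $\Phi$ by the index $\ell$ labelling the $h$-variables; everything subsequent is a rank-nullity argument resting on the standard fact that the symmetric $n\times n$ matrices annihilating a given $s$-dimensional subspace form a subspace of $\mathbb R^{n\times n}_{sym}$ of dimension $\binom{n-s+1}{2}$. The main bookkeeping hurdle is parsing (\ref{eq:vjt}) and its entrywise involution carefully enough to verify that every block coordinate of the stacked border vector is of the form $H_\ell m(X)v$ for a unique pair $(\ell,m)$; given the Kronecker conventions of Section \ref{sec:kronDef}, this is mechanical.
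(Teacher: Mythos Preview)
Your argument is correct. The splitting of $\Phi$ into $g$ independent copies of $\phi$ is exactly right once one unpacks (\ref{eq:vjt}): the entries of $V_j(X)[H]v$ are precisely the vectors $H_\ell\,m(X)v$ with $|m|=j$ (up to the permutation $m\mapsto m^T$ on monomials), so grouping by $\ell$ decouples the $H_\ell$'s. The rank--nullity computation for $\phi$ is clean, and the monotonicity of $s\mapsto ns-\binom{s}{2}$ on $\{0,\ldots,n\}$ together with $\alpha_r\le s_k\le n$ gives the bound; for $k=r$ the hypothesis pins down $s_r=\alpha_r$ and forces equality.

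This is a somewhat different route from the paper's. The paper does not argue directly: it quotes Lemmas 9.5 and 9.7 of \cite{CHSY03} for the exact value $\textup{codim}\,\mathcal R_r=g\alpha_r(\alpha_r-1)/2$ in the equality case, and then deduces the inequality for $k\ge r$ from the trivial observation $\dim\mathcal R_k\ge\dim\mathcal R_r$ (since projection onto the first $g\alpha_r$ block coordinates maps $\mathcal R_k$ onto $\mathcal R_r$). Your approach is self-contained---it supplies the content of the cited lemmas via the kernel description of $\phi$---and in fact yields the sharper exact formula $\textup{codim}\,\mathcal R_k=g\bigl(n(\alpha_k-s_k)+\binom{s_k}{2}\bigr)$ for arbitrary $k$, from which both the inequality and the equality case fall out at once. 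The paper's route is shorter on the page because it outsources the work; yours actually does the work.
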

A key fact is  that if $k=r$, then the bound on the codimension is
independent of $n$.

The bound (\ref{eq:aug22a7}) follows easily from the following sequence of
estimates, the first of which is based on Lemmas 9.5 and 9.7 in [CHYS03].
\begin{enumerate}
\item[\rm(1)] The codimension of ${\cR}_r$ in
$\RR^{ng\alpha_r}$ is equal to $g\alpha_r(\alpha_r-1)/2$.
\item[\rm(2)] $\textup{dim}\,{\cR}_r=ng\alpha_r-g\alpha_r(\alpha_r-1)/2$.
\item[\rm(3)] $\textup{dim}\,{\cR}_k\ge ng\alpha_r-g\alpha_r(\alpha_r-1)/2$
if
$k\ge r$.
\end{enumerate}
  To prove Theorem \ref{thm:localq}, note that
  the set $\{m(X)v:|m|\le d-1\}$ is linearly dependent
  if and only if there exists a set of nonzero
numbers $q_m\in\RR$ for $|m|\le d-1$
such that
$$
  q(X)v= \sum q_m m(X)v = 0.
$$
  Thus, $\{m(X)v:|m|\le d-1\}$ is linearly independent.
  By the CHSY Lemma, with $r=d-1$ so that $\alpha_r=\widetilde{\nu}$,
the codimension of the set $\cR_{d-1}=\{\tV(X)[H]v: H\in\gtupn\}$
  is less than $n$ (thanks to assumption (\ref{eq:may13c7})). The proof is
  completed by an application of Theorem \ref{thm:local}.

\subsection{Proof of Corollary \ref{cor:tomainModHess}}
\label{subsec:oct16a8}
We begin with a definition and a preliminary lemma.

If $\mathfrak{B}_n\subset (\RR_{sym}^{n\times n})^g\times \RR^n$ for
$n=1, 2,\dots$, then the graded set
$$
\mathfrak B=\cup_{n\ge 1}\mathfrak B_n
$$
is {\bf closed with respect
 to direct sums} if whenever $(X^j,v^j)\in\mathfrak B_{n_j},$
 for $j=1,2,\dots,k,$ it follows that $(X, v)$ is in
$\mathfrak B_n$ for $n=\sum n_j$, where
$X=\textup{diag\{}X^1,\ldots,X^k\}$ and $v=\textup{vec}[v^1\ \cdots\ v^k]$.

\begin{lemma}
 \label{lem:directsums}
  Let $k$ be a given positive integer
and suppose $\mathfrak B$ is closed with respect to
 direct sums. If for each $(X,v)\in\mathfrak B$ the set
 $\{m(X)v: |m|\le k\}$ is linearly dependent, then there
 exists a nonzero  polynomial
$$
   q=\sum_{|m|\le k}  q_m m(x)
$$
(which is not necessarily symmetric) of degree $\le k$ such that
$q(X)v=0$ for every $(X,v)\in\mathfrak B$.
\end{lemma}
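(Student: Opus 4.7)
The plan is to convert the pointwise linear-dependence hypothesis into a statement about subspaces of a fixed finite-dimensional vector space, and then use closure under direct sums to collapse the intersection over all of $\mathfrak{B}$ to a single finite intersection.

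First, I would let $\mathcal{P}_k$ denote the (finite-dimensional) real vector space of nc polynomials of degree at most $k$ in $g$ variables, and, for each $(X,v) \in \mathfrak{B}$, define the annihilator subspace
\[
N_{(X,v)} = \{ q \in \mathcal{P}_k : q(X)v = 0 \} \subseteq \mathcal{P}_k.
\]
The hypothesis that $\{ m(X)v : |m|\le k\}$ is linearly dependent is exactly the statement that $N_{(X,v)} \ne \{0\}$ for every $(X,v) \in \mathfrak{B}$. The conclusion I want is that the global intersection $\bigcap_{(X,v) \in \mathfrak{B}} N_{(X,v)}$ is nonzero, for any $0 \ne q$ in this intersection will serve as the required polynomial.

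The key step is to use direct-sum closure to identify finite intersections of these annihilators with annihilators of direct sums. If $(X^1,v^1),\ldots,(X^r,v^r) \in \mathfrak{B}$ and $(X,v)$ denotes their direct sum (block-diagonal tuple and stacked vector), then every monomial $m$ satisfies $m(X) = \mathrm{diag}(m(X^1),\ldots,m(X^r))$, hence so does every polynomial $q \in \mathcal{P}_k$. Therefore
\[
N_{(X,v)} \;=\; \bigcap_{j=1}^{r} N_{(X^j,v^j)}.
\]
By the direct-sum closure hypothesis $(X,v) \in \mathfrak{B}$, so the intersection on the right is nonzero by the hypothesis of the lemma. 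Hence \emph{every} finite subintersection of the family $\{N_{(X,v)}\}_{(X,v) \in \mathfrak{B}}$ is nonzero.

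To finish, I would use finite-dimensionality of $\mathcal{P}_k$: any family of subspaces of a finite-dimensional space has a minimal finite subintersection (descending chain condition). Pick $(X^1,v^1),(X^2,v^2),\ldots$ in $\mathfrak{B}$ producing a strictly descending chain of finite intersections $\cap_{j\le r} N_{(X^j,v^j)}$; this must stabilize at some $r$, and the terminal subspace is then contained in $N_{(Y,w)}$ for \emph{every} $(Y,w) \in \mathfrak{B}$. By the preceding paragraph this terminal subspace is nonzero, and any nonzero $q$ in it is the global annihilator we seek. There is no real obstacle here: the only substantive point is the block-diagonal evaluation identity combined with the hypothesis that $\mathfrak{B}$ is closed under direct sums, and the rest is a standard compactness/noetherian argument in finite dimensions.
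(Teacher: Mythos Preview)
Your argument is correct and is the standard one: annihilator subspaces in the finite-dimensional space $\mathcal P_k$, direct sums giving finite intersections, and the descending chain condition to reach a nonzero global intersection. The paper does not actually prove this lemma in-house---it simply cites Lemma~4.1 of \cite{DHMind} and \S6 of \cite{HMVjfa}---and the argument found there is essentially the one you have written; the only cosmetic point is that the cleanest way to phrase your last step is to take a finite subintersection of \emph{minimal dimension} (which exists since dimensions are nonnegative integers) and observe that intersecting it with any further $N_{(Y,w)}$ cannot lower the dimension, hence it already lies in every $N_{(Y,w)}$.
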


\begin{proof} See Lemma 4.1 of \cite{DHMind} and, for a cleaner proof,
\S 6 of \cite{HMVjfa}.
\end{proof}

To prove Corollary \ref{cor:tomainModHess}, note that there does not
exist a nonzero polynomial $q$ of degree at most $k$ such that
$q(X)v=0$ for every $X\in\gtupn$ with  $\|X\|<\epsilon$ when $v$ is a
nonzero vector in $\RR^n$. Thus, Lemma \ref{lem:directsums} applied to
the noncommutative $\varepsilon$ neighborhood of zero (i.e., to the graded
set $\mathcal{U}$  that is defined just above the statement of Corollary
\ref{cor:tomainModHess}) guarantees that there exists a pair $(X,v)\in
\mathcal{U}_n$
for some choice
of $n$ such that $\{m(X)v:\vert m\vert\le k\}$ is linearly independent in
$\RR^n$. This much is true for every positive integer $k$. Now fix
$k=d-1$. Then $n\ge \alpha_{d-1}=\widetilde{\nu}$.  However, by
considering $\oplus_1^t (X,v)$
it may be assumed that $n>g\widetilde{\nu}(\widetilde{\nu}-1)/2$, and
hence that Theorem \ref{thm:localq} is applicable.

\subsection{Proofs of Theorems \ref{thm:regHess-k} and \ref{thm:regHessq-k}}
\label{sec:pfregHess-k}
 For the proof of
 Theorem \ref{thm:regHess-k}, the now familiar arguments show that the
hypotheses imply
 that $\mu_-(Z(X)) \le kn-1.$ On the other hand,
 $\mu_-(Z(X))=n \mu_-(\cZ)$. Hence
 $\mu_-(\cZ)=\sigma_-^{min}(p^{\prime\prime})< k.$

 Theorem \ref{thm:regHessq-k} follows from Theorem
 \ref{thm:regHess-k} in much the same way that
  Theorem \ref{thm:localq} follows from Theorem \ref{thm:local}.
  The main point is that, from the CHSY-Lemma with $r=d-2$ so that
$\alpha_r=\nu$, the subspace $\cR_{d-2}$ has
  codimension less than $n$ (thanks to assumption (\ref{eq:aug20a7})).
  Thus, restricting $H$ to the space $\mathcal H$ of codimension at most $nk$,
  it follows that
\begin{eqnarray*}
\textup{codim}\{V(X)[H]v: H\in\mathcal H\}&\le&
\textup{codim}\,\cR_{d-2}+\textup{codim}\,\cH\\
&\le& n-1+nk.
\end{eqnarray*}


\appendix

\section{A connection between $Z$ and $W$} 
\label{sec:9}

In this appendix we present some useful connections between the polynomials
$\psi_j(x)$ that are defined by formula (\ref{eq:N1}) and the entries in the
middle matrix $Z_{ij}(x)$ for the Hessian $p^{\prime\prime}(x)[h]$ for an
arbitrary symmetric nc polynomial of degree $d$ in $g$ symmetric variables.

\begin{theorem}
\label{thm:N1}
If $p(x)$ is a symmetric nc polynomial of degree $d$ in $g$ symmetric
variables,
then the coefficients $\psi_s$, $s=0, \ldots, d-1$,
in formula (\ref{eq:N1})
are related to the entries
$Z_{0s}$  in the representation formula (\ref{eq:defs-middle}) for
$p^{\prime\prime}(x)$ by
the formula
\begin{equation}
\label{eq:N2}
\psi_s(x)^T=\frac{1}{2}[x_1 \ \cdots \ x_g]Z_{0s}(x) \ + \ \psi_s(0)^T
\quad\textrm{for}
\quad s=0, \ldots, d-1\,.
\end{equation}
\end{theorem}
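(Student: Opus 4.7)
The plan is to compute the ``leftmost-$h$'' part of $p^{\prime\prime}(x)[h]$ in two different ways, cancel the common left factor $V_0(x)[h]^T$, multiply on the left by $[x_1\ \cdots\ x_g]$, and invoke uniqueness of the $V_s$-expansion to isolate each $Z_{0s}$.

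From the representation \eqref{eq:defs-middle}, and because every entry of $V_i(x)[h]^T$ begins with an $x$ when $i\ge 1$ while $V_0(x)[h]^T=[h_1,\ldots,h_g]$, the leftmost-$h$ part of $p^{\prime\prime}(x)[h]$ equals $V_0(x)[h]^T\sum_{s=0}^{d-2}Z_{0s}(x)V_s(x)[h]$. On the other hand, grouping the monomials of $p$ by first letter gives $p(x)=p(0)+\sum_{a=1}^g x_a r_a(x)$ with each $r_a$ of degree at most $d-1$, and differentiating directly yields
\[
p^{\prime\prime}(x)[h]\ =\ 2\sum_a h_a r_a^{\prime}(x)[h]\ +\ \sum_a x_a r_a^{\prime\prime}(x)[h],
\]
whose first summand is the leftmost-$h$ part. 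Equating the two expressions for that part and using left cancellation in the free algebra (valid because $h_1,\ldots,h_g$ are distinct generators), one obtains
\[
\sum_{s=0}^{d-2} Z_{0s}(x)V_s(x)[h]\ =\ 2\,(r_1^{\prime}(x)[h],\ldots,r_g^{\prime}(x)[h])^T.
\]
Multiplying on the left by $[x_1\ \cdots\ x_g]$ converts the right side into $2\sum_a x_a r_a^{\prime}(x)[h]$, which is exactly the leftmost-$x$ part of $p^{\prime}(x)[h]$.

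To rewrite that leftmost-$x$ part in terms of the $\psi_s$, I use \eqref{eq:N1}: since every entry of $V_s(x)[h]$ begins with an $h$, splitting $\psi_s(x)=\psi_s(0)+(\psi_s(x)-\psi_s(0))$ decomposes $p^{\prime}(x)[h]=\sum_s\psi_s(x)^TV_s(x)[h]$ into the leftmost-$h$ piece $\sum_s\psi_s(0)^TV_s(x)[h]$ and the leftmost-$x$ piece $\sum_s(\psi_s(x)-\psi_s(0))^TV_s(x)[h]$. Combining with the previous display gives
\[
\sum_s[x_1\ \cdots\ x_g]Z_{0s}(x)V_s(x)[h]\ =\ 2\sum_s(\psi_s(x)-\psi_s(0))^T V_s(x)[h].
\]
Uniqueness of the $V_s$-expansion from Lemma \ref{lem:nov6a6}, applied degree-by-degree (different values of $s$ yield monomials $x^a h x^s$ distinguishable by the number of $x$'s to the right of the unique $h$), then forces $[x_1\ \cdots\ x_g]Z_{0s}(x)=2(\psi_s(x)-\psi_s(0))^T$ for each $s$, which is \eqref{eq:N2}.

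The main bookkeeping hurdle is tracking which letter sits at the leftmost position of each monomial under all the operations; organizing everything around the first-letter decomposition of $p$ handles this cleanly, and the two routine free-algebra facts (left cancellation by the $h_a$'s and the degree-based uniqueness of the $V_s$-basis) then finish the proof.
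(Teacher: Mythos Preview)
Your proof is correct and takes a genuinely different route from the paper's. The paper expresses $p$ via formula \eqref{eq:nov10b6} in terms of the constant blocks $\cZ_{0j}$, differentiates once using explicit Kronecker-product identities, and then must invoke an external identity (Theorem~7.3 of \cite{DHMjda}) to collapse the resulting sum $\cZ_{0i}+\cZ_{0,i+1}K_i+\cdots+\cZ_{0\ell}K_{\ell-1}\cdots K_i$ into $Z_{0i}(x)$. Your argument instead exploits the leftmost-letter grading of the free algebra: the first-letter decomposition $p=p(0)+\sum_a x_a r_a$ lets you read off the leftmost-$h$ part of $p^{\prime\prime}$ directly, match it against $V_0^T\sum_s Z_{0s}V_s$ from \eqref{eq:defs-middle}, and then pass to the leftmost-$x$ part of $p^\prime$ to recover the $\psi_s$. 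This is more self-contained (it avoids the citation to \cite{DHMjda}) and conceptually cleaner; the paper's approach, by contrast, stays within the explicit Kronecker-product machinery developed in Sections~\ref{sec:kronDef}--\ref{sec:ident} and makes the connection to the block-triangular structure of $Z(x)$ more visible. One small point worth making explicit in your write-up: for $s=d-1$ the identity \eqref{eq:N2} is vacuous since $Z_{0,d-1}=0$ and $\psi_{d-1}$ is constant, which is why your sums over $Z_{0s}$ and over $\psi_s$ can have different upper limits without harm.
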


\begin{proof}
In view of formulas (\ref{eq:nov10b6}) and (\ref{eq:nov6d6})
\begin{eqnarray*}
p(x)&=&\frac{1}{2}\sum_{j=0}^\ell [x_1\ \cdots \ x_g]\cZ_{0j}
([x_1\ \cdots \ x_g]_{j+1})^T+q(x)\\
&=&\frac{1}{2}\sum_{j=0}^\ell [x_1\ \cdots \ x_g]\cZ_{0j}\Pi_j^{-1}
\begin{bmatrix}x_1 \\ \vdots \\ x_g\end{bmatrix}_{j+1}+q(x),
\end{eqnarray*}
where  $\ell=d-2$ and $\textup{degree}\,q(x)\le 1$. Therefore, since
$\cZ_{0j}=Z_{0j}(0)$ is independent of $x$,
$$
p^\prime(x)[h]=I+II+q^\prime(x)[h],
$$
where
$$
I=\frac{1}{2}\sum_{j=0}^\ell [h_1\ \cdots \ h_g]\cZ_{0j}
\Pi_j^{-1}
\begin{bmatrix}x_1 \\ \vdots \\ x_g\end{bmatrix}_{j+1}=\sum_{j=1}^{\ell+1}
u_j^TV_j(x)[h]
$$
for some choice of vectors $u_j\in\RR^{g^{j+1}}$,
$$
q^\prime(x)[h]=[h_1\ \cdots \ h_g]u_0=u_0^TV_0(x)[h]\quad\text{for some
vector}\ u_0\in \RR^g
$$
and
\begin{eqnarray*}
II&=&\frac{1}{2}\sum_{j=0}^\ell [x_1\ \cdots \ x_g]\cZ_{0j}\Pi_j^{-1}
\left\{
\begin{bmatrix}h_1 \\ \vdots \\ h_g\end{bmatrix}\otimes
\begin{bmatrix}x_1 \\ \vdots \\ x_g\end{bmatrix}_j+\cdots +
\begin{bmatrix}x_1 \\ \vdots \\ x_g\end{bmatrix}_j\otimes
\begin{bmatrix}h_1 \\ \vdots \\ h_g\end{bmatrix}\right\}\\
&=&\frac{1}{2}[x_1\ \cdots \ x_g]\sum_{j=0}^\ell \cZ_{0j}\Pi_j^{-1}\sum_{i=0}^j
\theta_{j, j-i}\Pi_iV_i,
\end{eqnarray*}
where, with the help of (\ref{eq:vja}), the factor $\theta_{ji}$ can be
written as
$$
\theta_{ji}=\begin{bmatrix}x_1 \\ \vdots \\ x_g\end{bmatrix}_i\otimes
I_{g^{j+1-i}}\quad\text{for}\ i=1,\ldots, j \quad\text{and}\quad
\theta_{j0}=I_{g^{j+1}}.
$$
Thus, the double sum
$$
\sum_{j=0}^\ell \cZ_{0j}\Pi_j^{-1}\sum_{i=0}^j
\theta_{j, j-i}\Pi_iV_i=\sum_{i=0}^\ell \left(\sum_{j=i}^\ell \cZ_{0j}
\Pi_j^{-1}\theta_{j, j-i}\Pi_i\right)V_i.
$$
But the inner sum $\sum_{j=i}^\ell \cZ_{0j}\Pi_j^{-1}\theta_{j, j-i}\Pi_i$ can be
reexpressed in terms of the matrix polynomials
$$
K_j(x)=\Pi_{j+1}^{-1}\left(\begin{bmatrix}x_1\\ \vdots\\ x_g\end{bmatrix}
\otimes I_{g^{j+1}}\right)\Pi_j
$$
and their products
$$
K_{j+1}K_j=\Pi_{j+2}^{-1}\left(\begin{bmatrix}x_1\\ \vdots\\ x_g\end{bmatrix}_2
\otimes I_{g^{j+1}}\right)\Pi_j, \quad \cdots
$$
as
\begin{equation}
\label{eq:oct20a8}
\cZ_{0i}+
\cZ_{0,i+1}K_i+\cZ_{0,i+2}K_{i+1}K_i
+\cdots+\cZ_{0\ell}K_{\ell-1}\cdots K_i = Z_{0i}(x);
\end{equation}
and the last identity follows from the formula $Z(x)A(x)=\cZ$ in
Theorem 7.3  of \cite{DHMjda}. Thus,
$$
p^\prime(x)[h]=\frac{1}
{2}
[x_1\ \cdots\ x_g]\sum_{i=0}^\ell Z_{0i}(x)
V_i(x)[h],
$$
which, upon comparison with formula (\ref{eq:N1}), implies that
$$
\psi_i(x)^T=u_i^T+\frac{1}{2}[x_1\ \cdots\  x_g]Z_{0i}(x)\quad\text{for}\ i=0,
\ldots,\ell+1,
$$
since $Z_{0,\ell+1}(x)=0$. Therefore, $\psi_i(0)^T=u_i^T$ and the proof is
complete.
\end{proof}

\begin{cor}
If $\psi_j(0)=0$ for $j=0, \ldots , \ell$ in the setting of Theorem
\ref{thm:N1}, then
\begin{equation}
\label{eq:N3}
W_{ij}=Z_{i0}QZ_{0j}\quad\textrm{for}\quad i,j=0,\ldots,\ell\,,
\end{equation}
where $\ell=d-2$ and
\begin{equation}
\label{eq:N4}
Q=\frac{1}{4}\left[\begin{array}{c}x_1\\\vdots\\x_g\end{array}\right]
[x_1 \ \cdots \ x_g]\,.
\end{equation}
\end{cor}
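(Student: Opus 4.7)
The plan is a direct substitution using the two structural results already in hand. From Theorem~\ref{thm:N1}, under the hypothesis $\psi_j(0)=0$ for $j=0,\ldots,\ell$, we have for each $s\in\{0,\ldots,\ell\}$
$$
 \psi_s(x)^T \;=\; \tfrac{1}{2}\,[x_1\ \cdots\ x_g]\,Z_{0s}(x).
$$
Since $Z_{s0}(x)=Z_{0s}(x)^T$ (by symmetry of the middle matrix, as recorded in subsection~\ref{sec:middle}), transposing gives the column version
$$
 \psi_s(x) \;=\; \tfrac{1}{2}\,Z_{s0}(x)\begin{bmatrix}x_1\\ \vdots\\ x_g\end{bmatrix}.
$$

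Next I would invoke item~(5) of Lemma~\ref{lem:3}, which identifies the blocks $W_{ij}$ of the middle matrix of $\{p'(x)[h]\}^T\{p'(x)[h]\}$ as $W_{ij}=\psi_i(x)\psi_j(x)^T$. Plugging in the two displays above yields
$$
 W_{ij} \;=\; \Bigl(\tfrac{1}{2}Z_{i0}(x)\begin{bmatrix}x_1\\ \vdots\\ x_g\end{bmatrix}\Bigr)\Bigl(\tfrac{1}{2}[x_1\ \cdots\ x_g]\,Z_{0j}(x)\Bigr) \;=\; Z_{i0}(x)\,Q(x)\,Z_{0j}(x),
$$
where $Q(x)=\tfrac{1}{4}[x_1,\ldots,x_g]^T[x_1,\ldots,x_g]$ is precisely the matrix in (\ref{eq:N4}). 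This proves (\ref{eq:N3}).

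Since every ingredient is supplied by the preceding theorem and lemma, there is really no obstacle here; the proof is just a one-line composition once the hypothesis $\psi_s(0)=0$ kills the constant correction term in (\ref{eq:N2}). The only points worth flagging in writing it up are (i) the transposition step, which needs the symmetry $Z_{s0}=Z_{0s}^T$, and (ii) the observation that the scalar factor $\tfrac{1}{2}\cdot\tfrac{1}{2}$ combines with the outer product of the vector of variables to reproduce the definition of $Q$.
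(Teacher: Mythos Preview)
Your proof is correct and is exactly the intended argument; the paper states this corollary without proof because it follows immediately from Theorem~\ref{thm:N1} and the factorization $W_{ij}=\psi_i\psi_j^T$ in Lemma~\ref{lem:3} (equation~(\ref{eq:N1a})), precisely as you have written it. Your two flagged points---the use of $Z_{s0}=Z_{0s}^T$ and the bookkeeping of the scalar $\tfrac14$---are the only things one needs to check, and both are in order.
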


\begin{theorem}
\label{thm:N2}
Let $p_k(x)=p(x_1, \ldots , x_g)$ be a homogeneous
symmetric nc polynomial of degree $k\ge 1$ in $g$ symmetric variables and
let
\begin{equation}
\label{eq:oct20b8}
p_k^\prime(x)[h]=\sum_{s=0}^{k-1}\psi_{ks}(x)^TV_s(x)[h].
\end{equation}
Then:
\begin{enumerate}
\item[\rm(1)] $\psi_{ks}(x)$ is a homogeneous polynomial of degree $k-1-s$ for
$s=0, \ldots , k-1$.
\item[\rm(2)] $\psi_{ks}(0)=0$ for $s=0, \ldots , k-2$ when $k\ge 2$.
\item[\rm(3)] Formulas (\ref{eq:N3}) and (\ref{eq:N4}) are in force for
$i, j=0, \ldots , k-2$ when $k\ge 2$.
\end{enumerate}
\end{theorem}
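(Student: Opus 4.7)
The three assertions will be deduced in sequence from machinery already in place. Since $p_k$ is homogeneous of degree $k$, Lemma \ref{lem:nov6a6} applied to $p_k$ produces the canonical expansion
\[
p_k'(x)[h] \;=\; \sum_{s=0}^{k-1} \varphi_{ks}(x)\, V_s(x)[h],
\]
in which $\varphi_{ks}(x)$ is a $1\times g^{s+1}$ row polynomial whose nonzero entries are homogeneous of degree $k-1-s$. The coefficients $\psi_{ks}$ appearing in (\ref{eq:oct20b8}) are, by the construction that underlies Lemma \ref{lem:3}, assembled from the output of Lemma \ref{lem:nov6a6} applied to each homogeneous piece of the polynomial; for $p_k$ there is only one such piece, so $\psi_{ks}(x)^T = \varphi_{ks}(x)$. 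This settles part (1).

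Part (2) is then an immediate corollary of part (1). Indeed, for $s \in \{0, 1, \ldots, k-2\}$ the degree $k-1-s$ of each entry of $\psi_{ks}(x)$ is at least one, so $\psi_{ks}(x)$ has no constant term and $\psi_{ks}(0) = 0$.

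For (3), I would apply the Corollary to Theorem \ref{thm:N1} directly to $p_k$. Since $p_k$ has degree $d = k$, the parameter there becomes $\ell = d-2 = k-2$, and the hypothesis $\psi_j(0) = 0$ for $j = 0, \ldots, \ell$ is exactly the content of (2) just established. The conclusion of that corollary is precisely that the $W_{ij}$ read off from $p_k'(x)[h]^T p_k'(x)[h]$ via (\ref{eq:rep2}), together with the $Z_{ij}$ read off from the middle matrix of $p_k''(x)[h]$, satisfy (\ref{eq:N3}) and (\ref{eq:N4}) for $i,j \in \{0, 1, \ldots, k-2\}$, as required.

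The main point worth flagging, rather than an obstacle per se, is that $\psi_{k,k-1}(x) = \psi_{k,k-1}(0)$ is a nonzero constant vector of size $g^{k}$, so its value at the origin does not vanish. This is exactly why the hypothesis of the corollary only requires vanishing up to index $\ell = k-2$ and why (\ref{eq:N3}) is asserted only in the stated range; outside that range the formulas for $W_{ij}$ pick up additional contributions coming from $\psi_{k,k-1}(0)$. Once this is kept in mind, no computation beyond invocation of Lemma \ref{lem:nov6a6} and the Corollary to Theorem \ref{thm:N1} is necessary.
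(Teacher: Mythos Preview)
Your argument is correct and mirrors the paper's own proof: part (1) is obtained by a homogeneity/degree count (you route this through Lemma \ref{lem:nov6a6}, the paper just reads off the degrees directly from (\ref{eq:oct20b8})), part (2) is immediate from (1), and part (3) is the Corollary to Theorem \ref{thm:N1} applied with $d=k$, $\ell=k-2$. The closing remark about $\psi_{k,k-1}(0)$ is accurate but not needed for the statement as given.
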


\begin{proof}
Assertion (1) is immediate from (\ref{eq:oct20b8}), since
$\psi_{ks}(x)^TV_s(x)[h]$ is a
homogeneous polynomial of degree $k-1$ in $x$ and $V_s$ is a homogeneous
polynomial of degree
$s$ in $x$. Assertions (2) and (3) then follow easily from (1)
and the preceding corollary. \end{proof}

Theorem \ref{thm:N2} also yields conclusions for nonhomogeneous polynomials,
subject to some restrictions.

\begin{cor}
\label{cor:may8a}
Let $p=p(x)=p(x_1, \ldots , x_g)$ be a
symmetric nc polynomial of degree $d\ge 2$ in $g$ symmetric variables
such that there are no terms of degree one and no terms of degree $d-1$ in
$p(x)$, let $\ell=d-2$  and let $\psi_s(x)^T$ denote the row vector
polynomials defined by
formula (\ref{eq:N1}). Then  $\psi_0(0)=0$ and $\psi_{\ell}(0)=0$
\end{cor}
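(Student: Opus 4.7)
The plan is to reduce the claim to an elementary bookkeeping argument based on the degree-grading of the representation formula (\ref{eq:N1}). First, I would decompose $p$ into its homogeneous components
\[
  p(x) = \sum_{j=0}^{d} p_j(x),
\]
in which the hypothesis forces $p_1 = 0$ and $p_{d-1} = 0$, and differentiate termwise so that the terms $p_1'[h]$ and $p_{d-1}'[h]$ drop out of $p'(x)[h]$ altogether.

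Next, for each homogeneous $p_j$ with $j \ge 1$, I would invoke Lemma \ref{lem:nov6a6} to write
\[
  p_j'(x)[h] = \sum_{s=0}^{j-1} \psi_{js}(x)^T V_s(x)[h],
\]
in which $\psi_{js}(x)$ is a row polynomial of size $1 \times g^{s+1}$ whose nonzero entries are homogeneous in $x$ of degree $j-1-s$. Summing over $j$ and regrouping by the border-vector index $s$ gives
\[
  p'(x)[h] = \sum_{s=0}^{d-1}\Bigl(\sum_{j=s+1}^{d} \psi_{js}(x)\Bigr)^T V_s(x)[h].
\]
Comparing with the defining expansion (\ref{eq:N1}) and separating homogeneous pieces in $x$ (each entry of $V_s(x)[h]$ has $x$-degree exactly $s$ and $h$-degree one, and the $\psi_{js}$ for fixed $s$ and varying $j$ have pairwise distinct homogeneous $x$-degrees $j-1-s$) would let me identify
\[
  \psi_s(x) \;=\; \sum_{j=s+1}^{d} \psi_{js}(x),
\]
with $\psi_{js}$ being exactly the degree $j-1-s$ homogeneous part of $\psi_s$.

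The punchline is the evaluation at $x=0$: because $\psi_{js}$ is homogeneous of degree $j-1-s \ge 0$, it survives evaluation at $0$ only in the single case $j = s+1$, so $\psi_s(0) = \psi_{s+1,s}(0)$, a constant determined entirely by the homogeneous component $p_{s+1}$. Taking $s=0$ gives $\psi_0(0) = \psi_{1,0}(0) = 0$ since $p_1 = 0$ by hypothesis; taking $s = \ell = d-2$ gives $\psi_\ell(0) = \psi_{d-1,d-2}(0) = 0$ since $p_{d-1} = 0$ by hypothesis. No serious obstacle arises once Lemma \ref{lem:nov6a6} is in hand; the corollary is essentially degree bookkeeping. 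The one subtlety worth flagging is the implicit uniqueness used in the matching step, namely that homogeneity in $x$ selects each $\psi_{js}$ out of the sum, and the edge case $d = 2$, in which $\ell = 0$ and both conclusions collapse into the single statement forced by $p_1 = 0$.
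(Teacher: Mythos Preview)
Your argument is correct and is essentially the same as the paper's: both decompose $p$ into homogeneous pieces, expand each $p_j'$ via the border-vector representation (you cite Lemma~\ref{lem:nov6a6} directly, the paper routes through Theorem~\ref{thm:N2}, which records the same degree information), regroup by $s$, and then read off $\psi_s(0)=\psi_{s+1,s}(0)$ from homogeneity. Your explicit mention of the uniqueness needed in the matching step and the $d=2$ edge case is a nice touch.
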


\begin{proof}
Let $p(x)=\sum_{k=0}^d c_kp_k(x)$, where $p_k(x)$ is a homogeneous polynomial
of degree $k$ and $c_k=0$ or $c_k=1$. Then, by formula (\ref{eq:oct20b8}),
\begin{eqnarray*}
p^{\prime}(x)[h]&=&\sum_{k=1}^d c_kp_k^{\prime}(x)[h]\\
&=&\sum_{k=1}^d c_k\sum_{s=0}^{k-1}\psi_{ks}(x)^TV_s(x)[h]\\
&=&\sum_{s=1}^{d-1}\left\{\sum_{k=s+1}^dc_k\psi_{ks}(x)^T\right\}V_s(x)[h]\,.
\end{eqnarray*}
Thus, in formula (\ref{eq:N1}),
$$
\psi_0(x)^T=0\quad\textrm{and}\quad
\psi_s(x)^T=\sum_{k=s+1}^dc_k\psi_{ks}(x)^T\quad\textrm{for}\ s=1,\ldots,d-1\,.
$$
In particular,
$$
\psi_{\ell}(x)^T=\sum_{k=\ell+1}^dc_k\psi_{k\ell}(x)^T=c_{\ell+1}
\psi_{\ell+1,\ell}(x)^T+c_{\ell+2}
\psi_{\ell+2,\ell}(x)^T.
$$
Consequently,
$$
\psi_{\ell}(0)^T=c_{\ell+1}\psi_{\ell+2,\ell}(0)^T=0\,,
$$
since $c_{\ell+1}=0$, by assumption and $\psi_{\ell+2,\ell}(0)^{T}=0$ by Theorem
\ref{thm:N2}.
\end{proof}


\newpage
\vspace{-5.0in}
\centerline{NOT FOR PUBLICATION}

\tableofcontents
\newpage

\end{document}